\documentclass[11pt,reqno]{amsart}

\usepackage[a4paper,margin=1in]{geometry}
\usepackage{amsmath,amssymb,amsfonts,amsthm,mathtools,mathrsfs}
\usepackage{microtype}
\usepackage{enumitem}
\usepackage{booktabs}
\usepackage[hidelinks]{hyperref}
\usepackage[nameinlink,capitalize,noabbrev]{cleveref}
\allowdisplaybreaks
\numberwithin{equation}{section}

\title[A compensated Piola principle]
{A compensated Piola principle for critical nondiffusive parabolic systems}
\author{Maotuo Guo}
\address{Department of Mathematics, Harbin University of Science and Technology, Harbin, China}
\email{guomaotuo@hrbust.edu.cn}
\subjclass[2020]{35Q35, 35A02, 35B30, 42B35, 76A10, 76W05}
\keywords{critical Besov spaces, Piola transform,  non-resistive MHD, Hookean viscoelasticity, Oldroyd-B, uniqueness, continuous dependence}
\date{}

\newcommand{\R}{\mathbb R}
\newcommand{\B}{\dot B}

\newcommand{\diver}{\operatorname{div}}
\newcommand{\Id}{\operatorname{Id}}
\newcommand{\T}{\dot T}
\newcommand{\Rem}{\dot R}
\newcommand{\D}{\mathrm D}
\newcommand{\dd}{\,\mathrm d}
\newcommand{\PP}{\mathbb P}
\newcommand{\QQ}{\mathbb Q}
\newcommand{\K}{\mathcal K}
\newcommand{\cF}{\mathcal F}
\newcommand{\cZ}{\mathcal Z}

\newcommand{\cM}{\mathcal M}

\newcommand{\norm}[2]{\left\lVert #1\right\rVert_{#2}}

\newtheorem{theorem}{Theorem}[section]
\newtheorem{proposition}[theorem]{Proposition}
\newtheorem{lemma}[theorem]{Lemma}
\newtheorem{corollary}[theorem]{Corollary}

\theoremstyle{definition}
\newtheorem{definition}[theorem]{Definition}
\theoremstyle{remark}
\newtheorem{remark}[theorem]{Remark}
\crefname{theorem}{Theorem}{Theorems}
\crefname{proposition}{Proposition}{Propositions}
\crefname{lemma}{Lemma}{Lemmas}
\crefname{corollary}{Corollary}{Corollaries}
\crefname{claim}{Claim}{Claims}
\crefname{definition}{Definition}{Definitions}
\crefname{remark}{Remark}{Remarks}

\begin{document}

\begin{abstract}
We introduce a compensated Piola graph method for Lagrangian stability estimates in critical homogeneous Besov spaces below the usual product threshold.  For $d\ge2$, $2d\le p<\infty$, and $s=d/p$, we establish local Hadamard well-posedness and a continuation criterion in the scaling-critical Besov phase space $\dot B^{s-1}_{p,1}(\mathbb R^d)^d\times \dot B^s_{p,1}(\mathbb R^d)$, for a class of incompressible parabolic systems coupled to nondiffusive internal variables.  The main obstruction is that the formal Piola product between the inverse deformation gradient and the Lagrangian velocity is only borderline at $p=2d$ and is not continuous in general for $p>2d$.  We replace this product by a closed solenoidal Piola graph, using a compensated divergence structure that survives in the high-integrability range.  The principle applies to viscous non-resistive MHD, Hookean incompressible viscoelasticity, and nondiffusive Oldroyd--B systems with affine objective terms.  In particular, it closes the previously untreated high-$p$ Hadamard well-posedness range for critical non-resistive MHD and for the nondiffusive Oldroyd--B systems considered here; combined with the known low-$p$ MHD theory, it gives the finite-$p$, $q=1$ critical Besov picture for non-resistive MHD.
\end{abstract}
\maketitle

\section{Introduction}

\subsection{The threshold and the geometric obstruction}
A recurring critical-space model in incompressible continuum mechanics consists of a viscous velocity $u$, a pressure $P$, and a nondiffusive internal variable $r$ transported and deformed by the flow.  Scaling assigns one fewer derivative to $u$ than to $r$:
\[
 u_0\in\dot B^{d/p-1}_{p,1},\qquad r_0\in\dot B^{d/p}_{p,1}.
\]
Put $s=d/p$.  The standard Eulerian difference estimates use products at regularity $s-1$, while the Lagrangian formulation formally introduces the Piola pullback $Av$, with $X$ the volume-preserving flow, $A=(DX)^{-1}$, and $v=u\circ X$.  The expected generic multiplication is
\begin{equation}\label{eq:intro-formal-product}
 \dot B^s_{p,1}\cdot\dot B^{s-1}_{p,1}
 \longrightarrow\dot B^{s-1}_{p,1}.
\end{equation}
At $p=2d$, equivalently $s=1/2$, this is a borderline product for the usual Besov calculus.  In the strict high-$p$ range $p>2d$, equivalently $s<1/2$, the map in \eqref{eq:intro-formal-product} is not continuous; Proposition~\ref{prop:generic-failure} gives an explicit high--high counterexample.  Thus the obstruction is not a loss caused by a particular constitutive law.  It is a geometric obstruction in the rough Piola transform itself.

For viscous non-resistive MHD, the critical Besov picture is especially sharp. Earlier Sobolev and nearly critical treatments of the same zero-resistivity difficulty include \cite{FeffermanEtAlJFA,FeffermanEtAlARMA}.  In the $q=1$ critical Besov scale, the $p=2$ theory was initiated in \cite{CheminEtAl}, and homogeneous Besov local existence and uniqueness were developed further in \cite{LiTanYin}.  The lifespan and Hadamard theory of \cite{YeLuoYin} gives local existence for all finite $p$ and continuous dependence in the range $p\le2d$.  For $q>1$, known norm-inflation results indicate the sharpness of the $\ell^1$ Besov summation in the mathematical critical Besov scale; see \cite{ChenNieYe}.  Thus, within the finite-$p$, $q=1$ scale, the remaining Hadamard gap is the strict high-integrability range $p>2d$.  The new MHD content of this paper is precisely the high-$p$ stability and uniqueness mechanism, with the endpoint $p=2d$ treated by the same uniform argument rather than by a separate borderline product estimate.  The threshold can be summarized as follows:
\[
\begin{array}{c|c|c}
 \text{range} & \text{generic product }\dot B^s_{p,1}\cdot\dot B^{s-1}_{p,1} & \text{stability mechanism}\\
\hline
 p<2d & \text{available} & \text{standard critical calculus}\\
 p=2d & \text{borderline} & \text{compensated Piola graph}\\
 p>2d & \text{false in general} & \text{closed solenoidal Piola graph}
\end{array}
\]
For Hookean incompressible viscoelasticity, critical and near-critical theories close to the identity, often coupled to geometric compatibility conditions and smallness assumptions for global results, include \cite{CheminMasmoudi,ChenMiao,Qian,LinLiuZhang,LeiLiuZhou,XieFu,ZhangFangLp}; for a recent critical Besov result in Hookean elastodynamics, see also \cite{ZhangZhouHookean}.  Our result is a local large-data theorem in the high-$p$ critical topology.  The analytic well-posedness statement does not require the physical compatibility constraints, although those constraints are propagated when imposed initially.

For Oldroyd-type models, the constitutive background goes back to \cite{Oldroyd1950,BirdArmstrongHassager,Renardy}; classical and modern PDE results include \cite{GuillopeSaut,LionsMasmoudi,ConstantinKliegl,ElgindiLiu,ElgindiRousset,ChenHao}.  For the principal source-free nondiffusive Oldroyd--B model, the local critical Besov theory quoted from \cite{DeAnnaPaicu} assumes $1\le p<2d$ in dimensions $d\ge3$, while the analogous $q>1$ ill-posedness in the principal Oldroyd--B critical Besov scale is proved in \cite{LiYuZhu}.  We cover $p\ge2d$ for the affine-in-gradient objective laws stated below, including damping, source, and both corotational and non-corotational terms.

Lagrangian--Eulerian methods for rough incompressible coefficients are represented by \cite{CheminBook,ConstantinLE,DanchinMucha}.  The point here is to make the Piola pullback meaningful below the generic product-law threshold.  Although Piola terminology is classical for divergence-preserving transformations, for instance in mixed finite elements \cite{GiraultRaviart,BoffiBrezziFortin}, the present use is an analytic critical-Besov graph construction rather than a finite-element mapping argument.

The central observation can be summarized in one formula.  The rough product
\[
 Av=(I+(A-I))v
\]
is not the object to be estimated.  Instead one defines the Piola pullback by the graph
\begin{equation}\label{eq:intro-main-formula}
 v=Fz=z+(z\cdot\nabla)Y,\qquad \operatorname{div}z=0.
\end{equation}
The high--high remainder in $(z\cdot\nabla)Y$ is a divergence, and hence satisfies a compensated estimate that is unavailable for arbitrary factors.  The paper turns \eqref{eq:intro-main-formula} into a closed graph on the solenoidal critical Besov space, proves that the graph can be differentiated in time, and uses it to control the nonzero divergence arising when two Lagrangian solutions are subtracted.  This closed graph principle is the part of the argument meant to be reusable beyond the three model systems treated here.

\subsection{Admissible transport--stress laws}
Let $\mathcal V$ be a finite-dimensional real inner-product space, let $\mathcal W\subset\mathcal V$ be a fixed linear subspace, and let $\bar a\in\mathcal V$ be a constant equilibrium.  Let
\[
 \mathbb S:\mathcal V\to\mathbb R^{d\times d},\qquad
 \mathbb G:\mathcal V\times\mathbb R^{d\times d}\to\mathcal V
\]
be constitutive maps.  We consider
\begin{equation}\label{eq:general-system}
\left\{
\begin{aligned}
 &\partial_tu+u\cdot\nabla u-\nu\Delta u+\nabla P
   =\operatorname{div}\mathscr S(r),\\
 &\partial_tr+u\cdot\nabla r=\mathscr G(r,\nabla u),\\
 &\operatorname{div}u=0,\\
 &(u,r)|_{t=0}=(u_0,r_0),
\end{aligned}
\right.
\qquad (t,x)\in[0,T]\times\mathbb R^d,
\end{equation}
where $\nu>0$ and
\[
 \mathscr S(r):=\mathbb S(\bar a+r)-\mathbb S(\bar a),
 \qquad
 \mathscr G(r,M):=\mathbb G(\bar a+r,M).
\]
Subtracting the constant equilibrium stress is immaterial after taking divergence and is essential for a clean homogeneous-space formulation.

\begin{definition}[Admissible law at an invariant affine space]\label{def:admissible}
The quadruple $(\bar a,\mathcal W,\mathbb S,\mathbb G)$ is called admissible if:
\begin{enumerate}[label=\textup{(A\arabic*)}]
 \item $\mathbb S$ is $C^\infty$, with all derivatives bounded on bounded subsets of $\mathcal V$;
 \item there are smooth maps $g:\mathcal V\to\mathcal V$ and
 $\mathbb L:\mathcal V\to\mathcal L(\mathbb R^{d\times d},\mathcal V)$, with derivatives bounded on bounded sets, such that
 \[
  \mathbb G(a,M)=g(a)+\mathbb L(a)[M],\qquad g(\bar a)=0;
 \]
 \item the affine constraint $\bar a+\mathcal W$ is invariant: for every $w\in\mathcal W$ and every matrix $M$,
 \[
  g(\bar a+w)\in\mathcal W,
  \qquad \mathbb L(\bar a+w)[M]\in\mathcal W.
 \]
\end{enumerate}
When no finite-dimensional constraint is present, one takes $\mathcal W=\mathcal V$.
\end{definition}

The restriction in (A2) is part of the theorem: the internal law is smooth in the internal variable and affine in the velocity gradient.  Nonlinear dependences on $M$, such as quadratic gradient laws or shear-dependent viscosities, are not included in the abstract result.  Differential or nonlinear physical constraints, for example solenoidality, determinant constraints, commutation of vector fields, or positivity of a conformation tensor, are also not encoded in the finite-dimensional subspace $\mathcal W$; when relevant they are propagated separately in Section~\ref{sec:applications}.

Set $s=d/p$ and
\begin{align*}
 \mathcal E_{p,\mathcal W}(T):={}&
 \Bigl(C([0,T];\dot B^{s-1}_{p,1})
 \cap L^1(0,T;\dot B^{s+1}_{p,1})\Bigr)^d\\
 &\times C([0,T];\dot B^s_{p,1};\mathcal W).
\end{align*}

Throughout the paper, local Hadamard well-posedness means existence and uniqueness in $\mathcal E_{p,\mathcal W}(T)$ together with continuity of the data-to-solution map in the topology displayed above, on every common compact subinterval of the lifespan.

\begin{theorem}[Compensated Piola well-posedness principle]\label{thm:abstract-main}
Let $d\ge2$, $2d\le p<\infty$, $s=d/p$, and let
$(\bar a,\mathcal W,\mathbb S,\mathbb G)$ be admissible.  For every
\[
 u_0\in\dot B^{s-1}_{p,1}(\mathbb R^d)^d,\qquad
 r_0\in\dot B^s_{p,1}(\mathbb R^d;\mathcal W),\qquad
 \operatorname{div}u_0=0,
\]
there is $T>0$ for which \eqref{eq:general-system} has a unique solution
$(u,r)\in\mathcal E_{p,\mathcal W}(T)$.  The pressure is determined modulo constants and may be chosen in $L^1(0,T;\dot B^s_{p,1})$.  On every compact subinterval of the maximal lifespan, the data-to-solution map is continuous in the stated Eulerian phase space.  On a sufficiently short common interval, the corresponding Lagrangian solution map is locally Lipschitz.

If $T^*<\infty$ is the maximal lifespan and
\[
 \sup_{t<T^*}\Bigl(
 \|u(t)\|_{\dot B^{s-1}_{p,1}}+
 \|r(t)\|_{\dot B^s_{p,1}}\Bigr)
 +\int_0^{T^*}\|u(t)\|_{\dot B^{s+1}_{p,1}}\,dt<\infty,
\]
then the solution extends beyond $T^*$.
\end{theorem}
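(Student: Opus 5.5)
The plan is to work entirely in Lagrangian coordinates for the stability part, and in Eulerian coordinates only for a priori bounds and existence.

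\textbf{Existence and a priori bounds.} Existence on a short interval follows from the standard critical Besov scheme. We use Friedrichs mollification, or iterates on the linearized system, together with the maximal regularity estimate for the Stokes system in $\tilde L^\infty(\dot B^{s-1}_{p,1})\cap L^1(\dot B^{s+1}_{p,1})$ and transport estimates for $r$ in $\dot B^s_{p,1}$ driven by $\nabla u\in L^1(\dot B^s_{p,1})$. The nonlinear terms we need are $u\cdot\nabla u=\diver(u\otimes u)$, $\diver\mathscr S(r)$ with $\mathscr S(r)\in\dot B^s_{p,1}$ by composition (A1), and $\mathscr G(r,\nabla u)=g(\bar a+r)+\mathbb L(\bar a+r)[\nabla u]$, which lies in $L^1(\dot B^s_{p,1})$ by (A2). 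None of these requires the bad product \eqref{eq:intro-formal-product}, because $\dot B^s_{p,1}$ is an algebra for every $s=d/p>0$, and the forcing $\diver\mathscr S(r)$ sits in $L^1(\dot B^{s-1}_{p,1})$ linearly in time. The smallness that fixes $T$ comes from the high-frequency part of $e^{\nu t\Delta}u_0$ in $L^1(\dot B^{s+1})$. The invariance (A3) shows that $r$ stays in $\mathcal W$. Compactness then yields a solution in $\mathcal E_{p,\mathcal W}(T)$, and the pressure is recovered as $P=(-\Delta)^{-1}\diver\diver(\mathscr S(r)-u\otimes u)\in L^1(\dot B^s_{p,1})$.

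\textbf{Stability in Lagrangian coordinates.} Uniqueness and stability cannot be done in Eulerian form, since a difference estimate at regularity $s-1$ needs $\delta u\cdot\nabla r$ in $\dot B^{s-2}$, or products of the type \eqref{eq:intro-formal-product}. So we pass to the flow $X$ with $\|DX-I\|_{L^\infty(\dot B^s)}\lesssim\int\|\nabla u\|_{\dot B^s}$ small, and set $v=u\circ X$, $\rho=r\circ X$, $A=(DX)^{-1}$. The system becomes $\partial_tv-\nu\diver(AA^T\nabla v)+A^T\nabla Q=A^T\diver_{\!A}\mathscr S(\rho)$, together with $\diver(Av)=0$ and the ODE $\partial_t\rho=\mathscr G(\rho,\nabla v A)$. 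The crucial difficulty is that, for two solutions, $\diver\delta v=\diver((I-A_2)\delta v-\delta A\,v_1)$, and we must estimate this in $\dot B^{s-1}$ with $v$ only in $\dot B^{s-1}$. This is exactly the forbidden product.

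\textbf{The Piola graph, which is the main obstacle.} Here we would not estimate $Av$ directly. Instead we represent $v=Fz=z+(z\cdot\nabla)Y$ with $\diver z=0$, where $Y=X-\mathrm{id}$ in the appropriate sense and $z$ is the Piola-transformed field. Since $\det DX=1$, the Piola identity gives $\diver((z\cdot\nabla)Y)$ as $\partial_j(z_j\partial_k Y)$-type expressions $\partial_j\partial_k(z_jY)$. The high--high paraproduct remainder $\dot R(z_j,\partial_kY)$ therefore appears only under a divergence, and it is bounded in $\dot B^{s-1}_{p,1}$ via $\dot B^{2s-1}_{p/2,1}\hookrightarrow$ plus one derivative gained. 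The condition $2s>0$ is all that is used, which is what survives for $p\ge2d$. We would prove that $F$ is a bounded invertible map on $\{z\in\dot B^{s-1}_{p,1}:\diver z=0\}$ onto its image, by a Neumann series for small $\|\nabla Y\|_{\dot B^s}$, and that its graph is closed. We would then differentiate the graph in time, $\partial_t v=F\partial_t z+(z\cdot\nabla)\partial_tY$, where $\partial_tY=v$ lies in $L^1(\dot B^{s+1})$. Rewriting the difference system in the unknowns $(\delta z,\delta\rho,\delta Y)$, the nonzero divergence of $\delta v$ is absorbed into $\delta F$, and all remaining terms involve only compensated or algebra products. Maximal regularity for the solenoidal part and Gronwall then give a Lipschitz bound on a short common interval. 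This is where the work lies, and where we expect to check the endpoint $s=1/2$ with the same estimate.

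\textbf{Conclusion.} Uniqueness follows from the Lipschitz bound. Eulerian continuity on compact subintervals then follows by composing with the flow, which is continuous since $DX-I$ is small in $\dot B^s$, together with a density/Bona--Smith argument and iteration over the lifespan. For the continuation criterion, the stated bound gives uniform control of $\nabla u$ in $L^1(\dot B^s)$ near $T^*$. The local existence time depends on the data only through a profile of the high frequencies of $u(t)$, which is uniformly controlled because $t\mapsto u(t)$ is continuous up to $T^*$. This continuity follows from the equation, using integrability of $\|u\|_{\dot B^{s+1}}$. Hence we can restart the solution at $T^*-\epsilon$ with a time independent of $\epsilon$, which extends it beyond $T^*$.
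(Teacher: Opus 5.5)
Your architecture is the paper's: Lagrangian variables; the compensated product, in which $\diver z=0$ turns the high--high remainder into a divergence so that only $2s>0$ is used; the graph $v=z+(z\cdot\nabla)Y$ on solenoidal fields with coercivity and closedness; the differentiated graph; the internal variable as an ODE in the algebra $\dot B^s_{p,1}$; and composition and restart arguments for continuity and continuation. The gap is the closing step, which you only assert, and the route you indicate for it would not close.

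First, the parabolic unknown must stay $\delta v$, not $\delta z$. The Piola field has no parabolic gain. From $z=v-(z\cdot\nabla)Y$ with $Y$ only in $L^\infty_T\dot B^{s+1}_{p,1}$, you get $z\in L^\infty_T\dot B^{s-1}_{p,1}\cap L^2_T\dot B^s_{p,1}$, but in general $z\notin L^1_T\dot B^{s+1}_{p,1}$. So a maximal-regularity estimate for $\delta z=\delta v-G_g$, with $G_g:=(\delta z\cdot\nabla)Y_1+(z_2\cdot\nabla)\delta Y$, would need $\Delta G_g\in L^1_T\dot B^{s-1}_{p,1}$, which is unavailable. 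Writing $\diver\delta v=\diver G_g$ is also only half of what is needed. The paper treats the difference equation, with the pressure put in divergence form by the Piola identity, as a Stokes system for $\delta v$ with prescribed divergence $g=\diver\delta v$. That estimate needs two inputs, coming from two different representations of $g$. The bound $\nabla g\in L^1_T\dot B^{s-1}_{p,1}$ comes from the non-divergence form $g=(I-A_1)\diamond\nabla\delta v+(A_2-A_1)\diamond\nabla v_2$, which is a pure $\dot B^s_{p,1}$ algebra product. The identity $\partial_tg=\diver R_g$ with $R_g=\partial_tG_g\in L^1_T\dot B^{s-1}_{p,1}$ comes from $g=\diver G_g$ and the product rule for compensated products. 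This second input is what controls $\partial_t\QQ\delta v$, and hence $\nabla\delta q$, which re-enters the forcing through $(I-A_1^\top)\delta q$.

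Second, the velocity estimate closes by absorption, not by Gronwall. Gronwall is fine for $\delta R$, but the velocity terms sit at the top order of the maximal-regularity norm $\cZ_T$ of $(\delta v,\delta q)$. One needs $\cZ_T\le C\|\delta u_0\|_{\dot B^{s-1}_{p,1}}+C_{\cM}T\|\delta r_0\|_{\dot B^s_{p,1}}+C_{\cM}(\eta_T+T)\cZ_T$. Here $\eta_T$ collects $\|v_i\|_{L^1_T\dot B^{s+1}_{p,1}\cap L^2_T\dot B^s_{p,1}}$, $\|\partial_tv_i\|_{L^1_T\dot B^{s-1}_{p,1}}$ and $\|q_i\|_{L^1_T\dot B^s_{p,1}}$, and tends to $0$ with $T$. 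This dictates which compensated bound you may use. Since $z_2(0)=u_{0,2}$, the norm $\|z_2\|_{L^\infty_T\dot B^{s-1}_{p,1}}$ does not shrink as $T\to0$. So the term $(z_2\cdot\nabla)\delta v$ in $R_g$ must be estimated by the middle bound $\|z_2\|_{L^2_T\dot B^s_{p,1}}\|\delta v\|_{L^2_T\dot B^s_{p,1}}\lesssim\eta_T\cZ_T$, using coercivity at regularity $s$, which gives $\|z_2\|_{L^2_T\dot B^s_{p,1}}\le2\|v_2\|_{L^2_T\dot B^s_{p,1}}$. The pairing through $\partial_tY=v\in L^1_T\dot B^{s+1}_{p,1}$ that you suggest is harmless for a single solution. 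In the difference, however, it leaves an $O(1)$ coefficient in front of $\|\delta v\|_{L^1_T\dot B^{s+1}_{p,1}}$.

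Two smaller points also need repair. A Neumann series for $L_Y$ is not available: $(a\cdot\nabla)Y$ leaves the solenoidal subspace, and off it the product is unbounded for $p>2d$. Only the lower bound $\|a\|_{\dot B^{s-1}_{p,1}}\le2\|L_Ya\|_{\dot B^{s-1}_{p,1}}$ holds, so a $z$ with $L_Yz=v$ must be produced from the smooth fields $z_n=A_nv_n$ together with closedness. Finally, differentiating the graph presupposes $z\in W^{1,1}$ in time with solenoidal values. The paper proves this by a BV/Radon--Nikod\'ym argument in which coercivity on the solenoidal domain rules out a singular time derivative; you take it for granted.
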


\subsection{Three model consequences}
The three applications have genuinely different internal variables: a solenoidal frozen-in vector, a matrix deformation gradient with an identity background, and an objective stress tensor satisfying a Lagrangian matrix ODE.

\begin{theorem}[High-$p$ non-resistive MHD]\label{cor:mhd-main}
Let $d\ge2$, $2d\le p<\infty$, and $s=d/p$.  The viscous non-resistive incompressible MHD system
\begin{equation}\label{eq:MHD-intro}
\left\{
\begin{aligned}
 &\partial_tu+u\cdot\nabla u-\nu\Delta u+\nabla P
   =\operatorname{div}(b\otimes b),\\
 &\partial_tb+u\cdot\nabla b=(\nabla u)b,\\
 &\operatorname{div}u=\operatorname{div}b=0
\end{aligned}
\right.
\end{equation}
is locally Hadamard well posed for divergence-free data
$(u_0,b_0)\in\dot B^{s-1}_{p,1}\times\dot B^s_{p,1}$.
\end{theorem}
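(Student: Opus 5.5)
The plan is to derive the statement from \cref{thm:abstract-main} by choosing a suitable admissible quadruple, and then to handle the constraint $\operatorname{div}b=0$ separately, since a finite-dimensional $\mathcal W$ cannot encode it.

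\textbf{Choice of the admissible law.} Take $\mathcal V=\mathcal W=\mathbb R^d$, $\bar a=0$, $\mathbb S(a)=a\otimes a$, $g\equiv0$ and $\mathbb L(a)[M]=Ma$. Then $\mathscr S(b)=b\otimes b$ and $\mathscr G(b,\nabla u)=(\nabla u)b$, so \eqref{eq:general-system} is exactly \eqref{eq:MHD-intro} without the condition $\operatorname{div}b=0$.

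\textbf{Admissibility.} Conditions (A1)--(A3) are immediate:
\begin{itemize}[label={}]
\item (A1) holds because $\mathbb S$ is a polynomial.
\item (A2) holds because $\mathbb L$ is linear in $a$ and $g(\bar a)=0$.
\item (A3) is trivial because $\mathcal W=\mathcal V$.
\end{itemize}

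\textbf{Well-posedness of the relaxed system.} \cref{thm:abstract-main} now gives existence, uniqueness in $\mathcal E_{p,\mathbb R^d}(T)$, and continuity of the data-to-solution map on compact subintervals of the lifespan. This holds for every divergence-free $u_0\in\dot B^{s-1}_{p,1}$ and every $b_0\in\dot B^s_{p,1}$.

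\textbf{Propagation of $\operatorname{div}b=0$.} Uniqueness and continuous dependence for the constrained problem follow once the relaxed solution keeps $\operatorname{div}b=0$. This is because the constrained class sits inside the relaxed one with the same topology. Formally, since $\operatorname{div}u=0$,
\[
\partial_t\operatorname{div}b+u\cdot\nabla\operatorname{div}b=0,
\]
using the identity $\operatorname{div}((\nabla u)b)-\operatorname{div}(u\cdot\nabla b)=-u\cdot\nabla\operatorname{div}b$.

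\textbf{Rigorous version (main obstacle).} At critical regularity, $\operatorname{div}b$ lies only in $\dot B^{s-1}_{p,1}$, and products like $u\cdot\nabla b$ are exactly the borderline products that fail for $p>2d$. So the Eulerian identity cannot be justified directly in low regularity. I would first try to work in Lagrangian form. With $X$ the flow and $A=(DX)^{-1}$, the equation $\partial_t b+u\cdot\nabla b=(\nabla u)b$ integrates to the Cauchy formula $b\circ X=DX\,b_0$, that is, $b=(DX\,b_0)\circ X^{-1}$. For volume-preserving $X$, the Piola identity
\[
\operatorname{div}\bigl((DX\,b_0)\circ X^{-1}\bigr)=(\operatorname{div}b_0)\circ X^{-1}
\]
then gives $\operatorname{div}b=0$.

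To justify this rigorously, I would mollify the data, $(u_0^n,b_0^n)\to(u_0,b_0)$, keeping both divergence-free; convolution preserves solenoidality. For smooth solutions the computation above is classical. \cref{thm:abstract-main} guarantees that the smooth solutions exist on a common interval and converge in $C([0,T];\dot B^s_{p,1})$. Since $\operatorname{div}b^n=0$ and $\operatorname{div}$ is continuous into $\dot B^{s-1}_{p,1}$, the limit satisfies $\operatorname{div}b=0$. The same continuity argument, on every compact subinterval of the lifespan, gives the constraint up to $T^*$.
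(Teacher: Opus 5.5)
Your choice of admissible law, the reduction to \cref{thm:abstract-main}, and the remark that the constrained class sits inside the relaxed one with the same topology all match the paper. You also identify the right mechanism: Cauchy formula plus Piola identity.

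The gap is in the step you call the rigorous version. \cref{thm:abstract-main} produces solutions only in the critical class $\mathcal E_{p,\mathcal W}(T)$. It says nothing about the regularity of the solution issued from mollified data. Your sentence ``for smooth solutions the computation above is classical'' therefore presupposes a persistence-of-regularity theorem: higher Besov norms of $(u^n,b^n)$ would have to survive on the whole interval where the critical solution lives, with bounds controlled by the critical norms. That result is neither in the paper nor in your proposal. Without it, $(u^n,b^n)$ is known to be no more regular than $(u,b)$. Justifying $\operatorname{div}b^n=0$ is then exactly the problem you started with, and the approximation buys nothing.

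The missing observation is that no smoothing is needed: the Lagrangian argument closes at critical regularity, and this is what the paper does.
\begin{itemize}
\item By \eqref{eq:internal-ode}, $B=b\circ X\in C_T\dot B^s_{p,1}$ solves the linear ODE $\partial_tB=(\nabla vA)B$ with $B(0)=b_0$.
\item The integral identity for $F-I$ from \cref{cor:flow-piola} and \cref{lem:lag-regularity}, together with $(\nabla u)\circ X=\nabla vA$, shows that $Fb_0=b_0+(F-I)b_0\in C_T\dot B^s_{p,1}$ solves the same ODE.
\item Gronwall in the algebra $\dot B^s_{p,1}$, with coefficient $\nabla vA\in L^1_T\dot B^s_{p,1}$, gives $B=Fb_0$.
\item Since $b_0$ and $F$ are bounded and $X$ is bi-Lipschitz and volume preserving, the substitution $x=X(t,y)$ gives $\langle\operatorname{div}b(t),\varphi\rangle=-\int Fb_0\cdot(\nabla\varphi)\circ X\,dy=-\int b_0\cdot\nabla_y(\varphi\circ X)\,dy=0$. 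This is the same computation as in the proof of \cref{prop:hookean-constraints}.
\end{itemize}

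If you prefer an approximation argument, use the artificial-diffusion approximations of \cref{sec:existence-appendix} instead of the exact system. There $\operatorname{div}b_n$ solves $\partial_tg+u_n\cdot\nabla g-\varepsilon_n\Delta g=0$ with $g(0)=0$, hence vanishes. The distributional convergence proved in that appendix passes the constraint to the limit, and restarting covers the lifespan.
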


\begin{corollary}[Finite MHD integrability exponents]\label{cor:mhd-all-p}
Combining Theorem~\ref{cor:mhd-main} with the $q=1$ critical Besov theory of \cite{YeLuoYin} in the range $1<p\le2d$, system \eqref{eq:MHD-intro} is locally Hadamard well posed for divergence-free data in
\[
 \dot B^{d/p-1}_{p,1}\times\dot B^{d/p}_{p,1}
\]
for every $1<p<\infty$.
\end{corollary}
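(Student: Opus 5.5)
The plan is to split the exponent range at $p=2d$ and, for $p\le 2d$, also at $p=2d$ in the opposite direction. For $2d\le p<\infty$ the statement is exactly Theorem~\ref{cor:mhd-main}, which I take as given. For $1<p\le 2d$, I would invoke the $q=1$ critical Besov theory of \cite{YeLuoYin}. That theory gives local existence in $\dot B^{d/p-1}_{p,1}\times\dot B^{d/p}_{p,1}$ for all finite $p$, together with uniqueness and continuous dependence in the range $p\le 2d$. The two ranges overlap at $p=2d$, so every $1<p<\infty$ is covered by at least one of them.

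The only real work is making sure the two notions of ``locally Hadamard well posed'' agree. I must check three things.

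\begin{enumerate}[label=(\roman*)]
 \item The solution class of \cite{YeLuoYin} coincides with the MHD specialization of $\mathcal E_{p,\mathcal W}(T)$. That specialization is $u\in C([0,T];\dot B^{s-1}_{p,1})\cap L^1(0,T;\dot B^{s+1}_{p,1})$ and $b\in C([0,T];\dot B^s_{p,1})$ with $\operatorname{div}b=0$.
 \item Uniqueness holds in that class, not merely in a smaller auxiliary class.
 \item Continuity of the data-to-solution map holds in the same Eulerian topology on common compact subintervals of the lifespan.
\end{enumerate}

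If \cite{YeLuoYin} states continuity only on a short common interval, I would upgrade it using the continuation criterion of Theorem~\ref{thm:abstract-main}, or its analogue there. The argument is a standard covering of a compact subinterval $[0,T']\subset[0,T^*)$ by finitely many short intervals. On each piece the local existence time depends only on the profile of the data, and $\{u(t):t\in[0,T']\}$ is compact in $\dot B^{s-1}_{p,1}$. One then iterates short-time continuity along the pieces.

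I would also note that MHD fits the abstract framework, so the high-$p$ part uses the same phase space. Take $\mathcal V=\mathbb R^d$, $\bar a=0$, $\mathbb S(b)=b\otimes b$, $\mathbb G(b,M)=Mb$ and $\mathcal W=\mathcal V$. The constraint $\operatorname{div}b=0$ is propagated separately, as noted in Section~\ref{sec:applications}, and $\operatorname{div}(b\otimes b)$ matches the momentum forcing. Hence both results refer to literally the same phase space and topology.

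The main obstacle I expect is step (iii) above: harmonizing the precise continuity statement of \cite{YeLuoYin} with the definition of Hadamard well-posedness used here. The gluing itself at the overlap $p=2d$ is trivial.
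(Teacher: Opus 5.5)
Your proposal matches the paper: the corollary is obtained simply by taking the union of Theorem~\ref{cor:mhd-main} for $2d\le p<\infty$ with the quoted low-$p$ theory of \cite{YeLuoYin} for $1<p\le2d$ (see Remark~\ref{rem:low-p-not-reproved}). The paper does not carry out your harmonization checks (i)--(iii). One caution if you do carry them out: for $p<2d$ the continuation criterion of Theorem~\ref{thm:abstract-main} is not available, since that theorem assumes $p\ge2d$. Any upgrade from short-time to compact-subinterval continuity in that range must therefore rest on the stability and lifespan estimates of \cite{YeLuoYin} itself.
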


\begin{remark}\label{rem:low-p-not-reproved}
The low-$p$ part of Corollary~\ref{cor:mhd-all-p} is quoted from \cite{YeLuoYin}: that work gives local existence for finite $p$ and continuous dependence in the range $p\le2d$.  It is not reproved here.  The contribution of the present paper is the stability mechanism for $2d\le p<\infty$, in particular the strict range $p>2d$ where the generic product in \eqref{eq:intro-formal-product} fails.  If one invokes a low-$p$ theorem including the endpoint $p=1$, that endpoint can be appended; no argument in the present paper uses $p=1$.
\end{remark}

\begin{theorem}[Hookean incompressible viscoelasticity]\label{cor:hookean-main}
Let $U=I+H$ denote the deformation gradient and consider
\begin{equation}\label{eq:Hookean-intro}
\left\{
\begin{aligned}
 &\partial_tu+u\cdot\nabla u-\nu\Delta u+\nabla P
   =\operatorname{div}(UU^\top-I),\\
 &\partial_tU+u\cdot\nabla U=(\nabla u)U,\\
 &\operatorname{div}u=0.
\end{aligned}
\right.
\end{equation}
For $2d\le p<\infty$, with $s=d/p$, the system is locally Hadamard well posed for
\[
 u_0\in\dot B^{s-1}_{p,1},\qquad H_0=U_0-I\in\dot B^s_{p,1}.
\]
The analytic conclusion is first proved on the full affine Besov space $I+\dot B^s_{p,1}$ and does not require the physical constraints.  If initially
\[
 \operatorname{div}U_0^\top=0,\qquad \det U_0=1,
 \qquad [U_0^{(\alpha)},U_0^{(\beta)}]=0
 \quad(1\le\alpha,\beta\le d),
\]
where $U^{(\alpha)}$ are the columns, $\det U_0=1$ is understood almost everywhere through the affine Besov algebra $I+\dot B^s_{p,1}$, and the Lie brackets are understood in the affine distributional sense made precise in Proposition~\ref{prop:hookean-constraints}, then all three constraints are propagated.
\end{theorem}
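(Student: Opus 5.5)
Theorem~\ref{cor:hookean-main} reduces to Theorem~\ref{thm:abstract-main}. Take $\mathcal V=\mathcal W=\R^{d\times d}$, $\bar a=I$, $r=H$. Define $\mathbb S(a)=aa^\top$, $g\equiv0$ and $\mathbb L(a)[M]=Ma$. Then $\mathscr S(H)=(I+H)(I+H)^\top-I=H+H^\top+HH^\top$, which matches $\diver(UU^\top-I)$. We also have $\mathscr G(H,\nabla u)=(\nabla u)(I+H)$, which matches $\partial_tU+u\cdot\nabla U=(\nabla u)U$, since $\partial_tH=\partial_tU$. Checking (A1)--(A3) is immediate: $\mathbb S$ is a polynomial, $\mathbb L$ is linear in $a$ and $M$, $g(\bar a)=0$, and the invariance in (A3) is trivial because $\mathcal W=\mathcal V$. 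Theorem~\ref{thm:abstract-main} then gives existence, uniqueness, continuous dependence and the continuation criterion on the affine space $I+\B^s_{p,1}$, with no constraints imposed. This proves the analytic part.

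\textbf{Step 1: propagation of $\diver U^\top=0$.} I would work first with smooth data, or directly at the level of distributions, using the regularity of $\mathcal E_{p,\mathcal W}(T)$. Write $\phi_j=\partial_i U_{ij}$ (the divergence of the rows of $U^\top$). Differentiating the equation and using $\diver u=0$ gives the transport equation $\partial_t\phi+u\cdot\nabla\phi=0$. Because this is linear transport by a Lipschitz-in-$L^1_t$ field, $\nabla u\in L^1(0,T;\B^{s}_{p,1})\subset L^1_tL^\infty$, uniqueness for transport in $\B^{s-1}_{p,1}$ gives $\phi\equiv0$. Transport of negative regularity $s-1>-1$ is allowed, since $s-1\in(-1,0]$ lies in the admissible range $-d/p'... $. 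Equivalently, one can argue in Lagrangian coordinates: $U(t,X)=DX\,U_0$, and then use the Piola identity for $\operatorname{cof}(DX)$. This is cleaner and is exactly the Piola structure of the paper.

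\textbf{Step 2: propagation of $\det U=1$ and the commutators.} Along trajectories, $\frac{d}{dt}\det U=\operatorname{tr}(\nabla u)\det U=0$. In the Lagrangian form, $\det U(t,X)=\det DX\,\det U_0=\det U_0$, because the flow is volume preserving. The products involved make sense because $\B^s_{p,1}+\R$ is an algebra and is contained in $L^\infty$. For the columns, $\partial_tU^{(\alpha)}+u\cdot\nabla U^{(\alpha)}=U^{(\alpha)}\cdot\nabla u$, that is, $[\partial_t+u\cdot\nabla,\,U^{(\alpha)}\cdot\nabla]=0$. So each column is a pushforward $U^{(\alpha)}=X_*U_0^{(\alpha)}$, and Lie brackets commute with pushforward. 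Hence $[U^{(\alpha)},U^{(\beta)}]=X_*[U_0^{(\alpha)},U_0^{(\beta)}]=0$.

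\textbf{Main obstacle.} The difficulty is rigor at low regularity. The brackets involve $U\cdot\nabla U$, which is a product $\B^s_{p,1}\cdot\B^{s-1}_{p,1}$, and this product is exactly the one that fails for $p>2d$. So I would use the affine distributional definition of Proposition~\ref{prop:hookean-constraints}: write the bracket in divergence form (using $\diver U^\top=0$), or pair it against test functions. Then I would argue by approximation, mollifying the data while preserving the constraints. If mollification does not preserve the nonlinear constraints, I would instead mollify the Lagrangian flow and use the continuous dependence of Theorem~\ref{thm:abstract-main} to pass to the limit in the weak formulation of the identities. The step needing the most care is the continuity of the bracket functional under this convergence, which I would establish via the compensated divergence structure.
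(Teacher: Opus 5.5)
Your reduction to Theorem~\ref{thm:abstract-main} is correct. Taking $\bar a=I$, $\mathbb S(a)=aa^\top$, $\mathbb L(a)[M]=Ma$ is equivalent to the paper's choice $\bar a=0$, $\mathbb S(H)=(I+H)(I+H)^\top-I$, $\mathbb G(H,M)=M(I+H)$.

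For $\det U=1$ and $\diver U^\top=0$ your Lagrangian route is the paper's. It starts from $U\circ X=FU_0$, which is justified by uniqueness for the linear ODE $\partial_t\mathcal U=(\nabla vA)\mathcal U$ in $I+\dot B^s_{p,1}$; $FU_0$ also solves it because $\partial_tF=(\nabla vA)F$. It then uses $\det F=1$ and the chain rule $F^\top(\nabla\varphi)\circ X=\nabla(\varphi\circ X)$ against test functions.

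Your Eulerian alternative for $\phi=\diver U^\top$ also works, but not for the reason you state. Here $s-1\in(-1,-\tfrac12]$. The generic transport range $\sigma>-d/p=-s$ excludes $\sigma=s-1$, so you need the divergence-free range $\sigma>-1-d/p$. In addition, the cancellation giving $\partial_t\phi+u\cdot\nabla\phi=0$ pairs terms $\partial_iu_k\,\partial_kH_{ij}$. Each of these is a forbidden $\dot B^s\cdot\dot B^{s-1}$ product, so the cancellation must be carried out in divergence form.

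The genuine gap is the commutator step, which you leave as a fork between two strategies, neither of which closes. Constraint-preserving mollification is not available in any straightforward way for $\det U_0=1$ and commuting columns. Continuous dependence of the solution map does not help either: solutions with mollified data are not shown to be smooth, so classical naturality of the bracket is no more available for them than for $U$.

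The missing mechanism is the following.
\begin{enumerate}[label=\textup{(\roman*)}]
 \item Mollify only the perturbations, $V_{\alpha,n}=e_\alpha+J_nH_0^{(\alpha)}$. This keeps the linear solenoidal constraint, and the brackets $C_n=[V_{\alpha,n},V_{\beta,n}]$ are smooth, solenoidal, and tend to $0$ in $\dot B^{s-1}_{p,1}$ by \eqref{eq:null-mid}.
 \item Push these fields forward by the smooth flows $X_n=\Id+Y_n$ of smooth $u_n\to u$. Proposition~\ref{prop:critical-flow-jacobian} gives $Y_n\to Y$ in $C_T\dot B^{s+1}_{p,1}$, and classical naturality yields $[U_n^{(\alpha)},U_n^{(\beta)}]\circ X_n=F_nC_n$.
 \item The product $F_nC_n$ is exactly of type $\dot B^s_{p,1}\cdot\dot B^{s-1}_{p,1}$, which is unbounded in general for $p>2d$ by Proposition~\ref{prop:generic-failure}. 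You control it uniformly in $n$ by using that $C_n$ is solenoidal: write $F_nC_n=C_n+(C_n\cdot\nabla)Y_n$ and apply \eqref{eq:null-low}. This gives $\|F_nC_n\|_{C_T\dot B^{s-1}_{p,1}}\lesssim(1+\sup_n\|Y_n\|_{C_T\dot B^{s+1}_{p,1}})\|C_n\|_{\dot B^{s-1}_{p,1}}\to0$. The uniform composition bound at index $s-1$ then gives $[U_n^{(\alpha)},U_n^{(\beta)}]\to0$.
\end{enumerate}
This second use of the Piola-graph structure is what your sketch is missing. Once it is in place, the continuity of the bracket under $C_T\dot B^s_{p,1}$ convergence of solenoidal columns, which you did identify, gives $[U^{(\alpha)},U^{(\beta)}]=0$ in the limit.
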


\begin{theorem}[Nondiffusive Oldroyd--B]\label{cor:oldroyd-main}
Let $a,\mu_1,\mu_2\ge0$, $b\in[-1,1]$, and define
\[
 D(M)=\tfrac12(M+M^\top),\qquad
 \Omega(M)=\tfrac12(M-M^\top),
\]
\[
 Q_b(\tau,M)=\tau\Omega(M)-\Omega(M)\tau
              +b\bigl(D(M)\tau+\tau D(M)\bigr).
\]
For $2d\le p<\infty$, with $s=d/p$, the system
\begin{equation}\label{eq:Oldroyd-intro}
\left\{
\begin{aligned}
 &\partial_tu+u\cdot\nabla u-\nu\Delta u+\nabla P
   =\mu_1\operatorname{div}\tau,\\
 &\partial_t\tau+u\cdot\nabla\tau+a\tau+Q_b(\tau,\nabla u)
   =\mu_2D(\nabla u),\\
 &\operatorname{div}u=0
\end{aligned}
\right.
\end{equation}
is locally Hadamard well posed for
$(u_0,\tau_0)\in\dot B^{s-1}_{p,1}\times\dot B^s_{p,1}$, with $\tau_0$ symmetric.  Symmetry of $\tau$ is propagated.  Positivity or conformation-tensor cone constraints are not part of this theorem.  The conclusion includes the corotational case $b=0$, the non-corotational case $b\ne0$, the undamped case $a=0$, and the source-free case $\mu_2=0$; in particular, it covers the infinite-Weissenberg/source-free regime $a=\mu_2=0$.
\end{theorem}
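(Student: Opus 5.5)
The plan is to deduce the theorem from Theorem~\ref{thm:abstract-main} by placing \eqref{eq:Oldroyd-intro} into the framework of \eqref{eq:general-system}. Then only admissibility needs checking, plus propagation of symmetry.

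\textbf{Setting up the admissible law.} Take $\mathcal V=\mathbb R^{d\times d}$ with the Frobenius inner product, $\bar a=0$, and $\mathcal W=\mathrm{Sym}(d)$. Set
\[
\mathbb S(\tau)=\mu_1\tau,\qquad g(\tau)=-a\tau,\qquad \mathbb L(\tau)[M]=\mu_2D(M)-Q_b(\tau,M).
\]
With these choices, $\mathscr S(r)=\mu_1 r$ and $\mathscr G(r,\nabla u)$ is exactly the right-hand side of the $\tau$ equation, once $a\tau+Q_b$ is moved to the right. The three conditions of Definition~\ref{def:admissible} are then checked as follows.
\begin{itemize}
\item (A1) holds because $\mathbb S$ is linear.
\item (A2) holds because $g$ is linear with $g(0)=0$, and $\mathbb L$ is affine in $\tau$ and linear in $M$, so all its derivatives are bounded on bounded sets.
\item (A3) is the one genuine verification. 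For symmetric $\tau$, we have $(\tau\Omega-\Omega\tau)^\top=\Omega^\top\tau-\tau\Omega^\top=-\Omega\tau+\tau\Omega$, so this commutator is symmetric. The terms $D\tau+\tau D$ and $D(M)$ are symmetric, and so is $-a\tau$. Hence $\mathbb L(w)[M]\in\mathcal W$ and $g(w)\in\mathcal W$ for every $w\in\mathcal W$ and every matrix $M$.
\end{itemize}
The parameter values $a=0$, $\mu_2=0$, $b=0$ and $b\neq0$ all fit this verification without change.

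\textbf{Applying the abstract theorem.} Take data $(u_0,\tau_0)$ with $\tau_0\in\dot B^s_{p,1}(\mathbb R^d;\mathrm{Sym}(d))$ and $\operatorname{div}u_0=0$. Theorem~\ref{thm:abstract-main} then gives a unique solution in $\mathcal E_{p,\mathcal W}(T)$, continuous dependence on the data, and the continuation criterion.

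\textbf{Propagation of symmetry.} By definition of $\mathcal E_{p,\mathcal W}(T)$, this solution takes values in $\mathcal W$, which already gives propagation of symmetry.

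\textbf{Uniqueness among all matrix-valued solutions.} The theorem statement allows $\tau$ to be an arbitrary matrix field, so we also want uniqueness in that wider class. For this, apply Theorem~\ref{thm:abstract-main} a second time with $\mathcal W=\mathcal V$. Condition (A3) is trivial there, so uniqueness holds in the full class of matrix-valued solutions. The symmetric solution from the first application is one such solution, so it is the unique one.

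\textbf{Main obstacle.} The only delicate point is this consistency between the two choices of $\mathcal W$. It is resolved by the uniqueness statement for $\mathcal W=\mathcal V$.

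Alternatively, one can prove propagation of symmetry directly. Let $\sigma=\tau-\tau^\top$. Transposing the $\tau$ equation shows that $\sigma$ solves a linear transport equation
\[
\partial_t\sigma+u\cdot\nabla\sigma=-a\sigma+\mathcal B(\nabla u)\sigma,
\]
where $\mathcal B(\nabla u)$ is linear in $\nabla u$. Since $\sigma(0)=0$, uniqueness for this transport equation with $\nabla u\in L^1_T\dot B^s_{p,1}$ gives $\sigma\equiv0$. Because the product laws at $s<1/2$ are delicate, I would prefer the argument through the abstract theorem.
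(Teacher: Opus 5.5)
Your proposal is correct and follows the paper's route. The paper likewise takes $\mathcal V=\mathbb R^{d\times d}$, $\mathcal W=\operatorname{Sym}_d$, $\bar a=0$, $\mathbb S(\tau)=\mu_1\tau$ and $\mathbb G(\tau,M)=-a\tau-Q_b(\tau,M)+\mu_2D(M)$, checks that every term preserves symmetry, and invokes Theorem~\ref{thm:abstract-main}; its explicit Lagrangian estimate for $\delta T$ is only the specialization of Proposition~\ref{prop:internal-stability} and \eqref{eq:stress-difference}, and your second application with $\mathcal W=\mathcal V$ (uniqueness also against non-symmetric competitors) is a valid strengthening the paper does not spell out.
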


\subsection{Scope of the contribution and proof architecture}
The common result is not obtained by repeating three model-specific proofs.  The constitutive variable always remains at the algebra regularity $\dot B^s_{p,1}$ after composition with the flow; the genuinely new issue is the velocity geometry at regularity $s-1$.  The argument has four steps.
\begin{enumerate}[label=\textup{(\roman*)}]
 \item \emph{Compensated product.}  For a solenoidal field $z$,
 \begin{equation}\label{eq:intro-null}
  \|(z\cdot\nabla)Y\|_{\dot B^{s-1}_{p,1}}
  \lesssim
  \|z\|_{\dot B^{s-1}_{p,1}}
  \|Y\|_{\dot B^{s+1}_{p,1}}.
 \end{equation}
 The divergence-free constraint is used precisely in the high--high remainder.  A separate counterexample shows that, when $p>2d$, neither the corresponding generic product nor the same differential expression without the solenoidal condition is bounded.
 \item \emph{Closed Piola graph.}  For $X=\operatorname{Id}+Y$, $F=DX$, and $v=u\circ X$, the negative-index product $F^{-1}v$ is never formed.  Instead, the Piola pullback is defined as the unique $z$ in the solenoidal subspace satisfying
 \begin{equation}\label{eq:intro-graph}
  v=L_Yz:=z+(z\cdot\nabla)Y.
 \end{equation}
 Smallness of $Y$ makes $L_Y$ coercive on that subspace, and a dyadic Fatou argument proves that this graph is closed under critical approximation.
 \item \emph{Time differentiation and divergence control.}  A vector-valued BV product rule, applied with the correct solenoidal domain, excludes a singular time derivative of $z$.  This justifies differentiating \eqref{eq:intro-graph}.  For two flows, the resulting identities express both $\operatorname{div}(v_1-v_2)$ and its time derivative through compensated products, which are perturbative on short intervals.
 \item \emph{Abstract closure.}  The difference equation is reduced to a constant-coefficient Stokes system with nonzero divergence.  The internal variable solves a Banach-space ODE in the algebra $\dot B^s_{p,1}$, so the constitutive law enters only through tame algebra estimates.  This separation yields the abstract theorem and the three applications.
\end{enumerate}

The comparison with the closest critical results is consequently precise.  In non-resistive MHD, the paper supplies continuous dependence in the previously untreated strict range $p>2d$ and gives an independent endpoint argument at $p=2d$.  In the Hookean system, it gives local high-$p$ Hadamard well-posedness without imposing the nonlinear physical constraints as hypotheses for the analytic construction; a separate push-forward argument proves propagation of those constraints.  In nondiffusive Oldroyd--B, it supplies the high-$p$ local theory for the affine-in-gradient objective family displayed in \eqref{eq:Oldroyd-intro}.  These conclusions use the same Piola graph but genuinely different internal-variable dynamics: a frozen-in solenoidal vector, an affine deformation gradient, and an objective stress ODE.

The paper is organized as follows.  Section~\ref{sec:functional} records the functional setting, the composition, Fatou, and endpoint transport estimates, and the local construction.  Section~\ref{sec:null} proves the compensated product and its strict high-$p$ contrast with generic multiplication.  Section~\ref{sec:piola} develops the closed Piola graph and its BV differentiation.  Sections~\ref{sec:lagrangian}--\ref{sec:stability} establish the abstract Lagrangian stability principle.  Section~\ref{sec:applications} verifies the three models and their invariant constraints.  The appendices provide the local construction and the vector-valued BV input.

\section{Functional setting and local construction}\label{sec:functional}

Throughout the compensated argument we fix
\begin{equation}\label{eq:range}
 2d\le p<\infty,\qquad s:=\frac dp\in\Bigl(0,\frac12\Bigr],
 \qquad \sigma:=s-1\in\Bigl(-1,-\frac12\Bigr].
\end{equation}
All homogeneous Besov spaces are realized in the usual subspace $\mathcal S'_h$ of tempered distributions.  Vector-, matrix-, and $\mathcal W$-valued norms are understood componentwise.  Pressures are taken modulo constants.

Constant matrices do not belong to homogeneous Besov spaces.  Accordingly, expressions such as $A=I+\alpha$ or $U=I+H$ are interpreted in the affine algebra
\[
 I+\dot B^s_{p,1}:=\{I+f:f\in\dot B^s_{p,1}\},
 \qquad \|I+f\|_{I+\dot B^s}:=1+\|f\|_{\dot B^s}.
\]
Every product involving an affine factor is expanded, for example
$SA^\top=S+S(A-I)^\top$.  This convention is used throughout and prevents constants from being assigned a homogeneous Besov norm.

We use the shorthand $C_TX=C([0,T];X)$ and $L^q_TX=L^q(0,T;X)$.  All time-dependent norms in the proof are ordinary Bochner norms.

\begin{lemma}[Algebra, derivatives, interpolation, and smooth maps]\label{lem:standard-besov}
Let $1<p<\infty$ and $s=d/p>0$.
\begin{enumerate}[label=\textup{(\roman*)}]
 \item $\B^s_{p,1}\hookrightarrow L^\infty$ and
 \[
  \norm{fg}{\B^s_{p,1}}
  \lesssim \norm{f}{\B^s_{p,1}}\norm{g}{\B^s_{p,1}}.
 \]
 \item For homogeneous representatives,
 \[
  \norm{\nabla f}{\B^r_{p,1}}\simeq\norm{f}{\B^{r+1}_{p,1}},
  \qquad
  \norm{\nabla^2 f}{\B^r_{p,1}}\simeq\norm{f}{\B^{r+2}_{p,1}}.
 \]
 \item If $f\in L^\infty_T\dot B^{r-1}_{p,1}\cap L^1_T\dot B^{r+1}_{p,1}$, then
 \[
  \|f\|_{L^2_T\dot B^r_{p,1}}^2
  \lesssim
  \|f\|_{L^\infty_T\dot B^{r-1}_{p,1}}
  \|f\|_{L^1_T\dot B^{r+1}_{p,1}}.
 \]
 \item Let $\Phi$ be smooth on an open neighbourhood of the ranges of $f_1$ and $f_2$, with $\Phi(0)=0$.  For each $R>0$ there is $C_R$ such that, whenever
 $\norm{f_i}{\B^s_{p,1}}\le R$,
 \[
  \norm{\Phi(f_1)-\Phi(f_2)}{\B^s_{p,1}}
  \le C_R\norm{f_1-f_2}{\B^s_{p,1}}.
 \]
 The corresponding tame estimates hold for smooth finite-dimensional multilinear maps.
\end{enumerate}
\end{lemma}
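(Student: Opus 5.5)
All four parts are standard Littlewood--Paley facts, so the plan is to prove each directly from the dyadic characterization $\norm{f}{\B^r_{p,1}}=\sum_j 2^{jr}\norm{\dot\Delta_j f}{L^p}$ on $\mathcal S'_h$, using Bernstein's inequalities and Bony's decomposition $fg=\T_fg+\T_gf+\Rem(f,g)$.

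For (i), the embedding follows from Bernstein: $\norm{\dot\Delta_j f}{L^\infty}\lesssim 2^{jd/p}\norm{\dot\Delta_j f}{L^p}$. Summing over $j$ gives $\norm{f}{L^\infty}\le\sum_j\norm{\dot\Delta_jf}{L^\infty}\lesssim\norm{f}{\B^s_{p,1}}$. The series converges in $L^\infty$, and its sum equals $f$ because $f\in\mathcal S'_h$. For the product:
\begin{itemize}
\item The paraproduct satisfies $\norm{\T_fg}{\B^s_{p,1}}\lesssim\norm{f}{L^\infty}\norm{g}{\B^s_{p,1}}$, since $\dot S_{j-1}f$ is bounded in $L^\infty$ and the spectral support is annular.
\item For the remainder, $\dot\Delta_j\dot\Delta_k f\,\tilde{\dot\Delta}_k g$ with $k\ge j-3$ is estimated in $L^{p/2}$ and then lifted by Bernstein, $L^{p/2}\to L^p$ at cost $2^{jd/p}$, or in $L^p$ using $L^\infty\times L^p$.
\item Summing $\sum_{k\ge j-3}2^{(j-k)s}$ converges because $s>0$. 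This is the only place where positivity of $s$ is used.
\end{itemize}

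For (ii), the operator $\nabla$ acts on frequency-localized pieces by Bernstein: $\norm{\nabla\dot\Delta_jf}{L^p}\simeq 2^j\norm{\dot\Delta_jf}{L^p}$. Here the reverse inequality uses the annular support. The word ``homogeneous representatives'' is exactly what makes $f$ determined by $\nabla f$ within $\mathcal S'_h$. The second-derivative statement follows by iterating.

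For (iii), on each block use $2^{jr}\norm{\dot\Delta_jf}{L^p}\le(2^{j(r-1)}\norm{\dot\Delta_jf}{L^p})^{1/2}(2^{j(r+1)}\norm{\dot\Delta_jf}{L^p})^{1/2}$. Then:
\begin{enumerate}
\item Sum over $j$ and apply Cauchy--Schwarz in $j$. This gives $\norm{f(t)}{\B^r_{p,1}}^2\le\norm{f(t)}{\B^{r-1}_{p,1}}\norm{f(t)}{\B^{r+1}_{p,1}}$ pointwise in time.
\item Integrate in $t$ and bound the first factor by its $L^\infty_T$ norm.
\end{enumerate}
The conclusion is then immediate.

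Part (iv) is the only nonroutine step. Write
\[
\Phi(f_1)-\Phi(f_2)=(f_1-f_2)\int_0^1\Phi'(f_2+\theta(f_1-f_2))\,d\theta .
\]
By (i), it suffices to show that $\Psi(g):=\Phi'(g)-\Phi'(0)$ lies in $\B^s_{p,1}$ with norm at most $C_R$ when $\norm{g}{\B^s_{p,1}}\le 2R$. The constant $\Phi'(0)$ is handled separately, since constants act boundedly by multiplication. The needed estimate is the classical composition estimate: for smooth $\Psi$ with $\Psi(0)=0$,
\[
\norm{\Psi(g)}{\B^s_{p,1}}\le C(\Psi,\norm{g}{L^\infty})\norm{g}{\B^s_{p,1}}\quad\text{for }s>0.
\]
I would prove it by Meyer's telescoping $\Psi(g)=\sum_j\bigl(\Psi(\dot S_{j+1}g)-\Psi(\dot S_jg)\bigr)=\sum_j m_j\dot\Delta_jg$, where $m_j=\int_0^1\Psi'(\dot S_jg+\theta\dot\Delta_jg)d\theta$. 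The key bounds are $\norm{\partial^\alpha m_j}{L^\infty}\le C2^{j|\alpha|}$, with $C$ depending on $\norm{g}{L^\infty}$. These give $\norm{\dot\Delta_k(m_j\dot\Delta_jg)}{L^p}\lesssim 2^{-N(k-j)}\norm{\dot\Delta_jg}{L^p}$ for $k>j$, and the trivial bound for $k\le j$. Summation then uses $s>0$ for $k\le j$ and $N>s$ for $k>j$.

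The main technical obstacle is the low-frequency end. In homogeneous spaces one must check that the telescoping series converges in $\mathcal S'_h$. This holds because $\dot S_jg\to0$ in $L^\infty$ as $j\to-\infty$ by the $\ell^1$ summability, and $\Psi(0)=0$. The open-neighbourhood hypothesis is handled by taking the ranges within the bounded sets controlled via (i). The multilinear tame statements follow from (i) by expanding differences as telescoping sums of products.
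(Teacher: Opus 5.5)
Your proposal is correct and follows the same route as the paper. The paper simply cites the standard Littlewood--Paley algebra, Bernstein/multiplier, interpolation and composition estimates, and proves (iv) by the fundamental theorem of calculus plus the algebra property; you spell out that this reduction needs the classical composition bound for $\Phi'(g)-\Phi'(0)$, which you prove via Meyer's telescoping. One caveat, which the paper shares: the segment $f_2+\theta(f_1-f_2)$ and the truncations $\dot S_jg+\theta\dot\Delta_jg$ need not stay inside the ranges of the $f_i$, so what is really used is smoothness of $\Phi$ on a ball of radius $\lesssim R$, and this holds for the globally smooth maps of (A1)--(A2).
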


\begin{proof}
These are the standard homogeneous Besov algebra, Fourier multiplier, interpolation, and smooth-composition estimates; see \cite{BCD,Triebel,RunstSickel,Meyer}, and \cite{BerghLofstrom} for the interpolation background.  The smooth-map estimate follows by writing the difference through the fundamental theorem of calculus and using the algebra property.
\end{proof}

\begin{lemma}[Bochner--Besov Fatou property]\label{lem:bochner-besov-fatou}
Let $1<p<\infty$, $\gamma\in\mathbb R$, and $1\le q\le\infty$.  Let $(f_n)$ be bounded in
$L^q(0,T;\dot B^\gamma_{p,1})$ and suppose that
$f_n\to f$ in $\mathcal D'((0,T)\times\mathbb R^d)$.  In the case $q=1$, assume in addition that the scalar functions
\[
 t\longmapsto \|f_n(t)\|_{\dot B^\gamma_{p,1}}
\]
are uniformly integrable on $(0,T)$.  Then
$f\in L^q(0,T;\dot B^\gamma_{p,1})$ and
\begin{equation}\label{eq:bochner-fatou}
 \|f\|_{L^q_T\dot B^\gamma_{p,1}}
 \le \liminf_{n\to\infty}
 \|f_n\|_{L^q_T\dot B^\gamma_{p,1}} .
\end{equation}
The same conclusion holds componentwise for vector- or finite-dimensional-valued functions, and simultaneously for intersections of finitely many such Bochner--Besov spaces.
\end{lemma}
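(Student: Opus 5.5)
The plan is to reduce everything to dyadic blocks. For each fixed $j\in\mathbb Z$ the Littlewood--Paley block $\dot\Delta_j$ is convolution with a Schwartz function $\varphi_j=2^{jd}\varphi(2^j\cdot)$. Let $\psi\in C_c^\infty((0,T))$ be a test function in time. Then for every $x$,
\[
 \int_0^T\psi(t)\,\dot\Delta_jf_n(t,x)\dd t
 =\langle f_n,\psi\otimes\varphi_j(x-\cdot)\rangle .
\]
The right-hand side is not literally a $\mathcal D'$ pairing, since $\varphi_j$ is not compactly supported. I would handle this by one of two routes. First, boundedness in $L^q\dot B^\gamma_{p,1}$ makes the pairings against Schwartz functions uniformly controlled, so a density and truncation argument upgrades $\mathcal D'$ convergence to convergence against $\psi\otimes\phi$ with $\phi$ Schwartz. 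This works because $\dot\Delta_j$ tested against Schwartz functions only sees frequencies near $2^j$, where $\|\dot\Delta_j f_n\|_{L^p}\le 2^{-j\gamma}\|f_n\|_{\dot B^\gamma}$ bounds everything uniformly. Alternatively, I would first pass to $\dot\Delta_j f_n\to g_j$ weakly-$*$ in $L^q(0,T;L^p)$ along a subsequence, then identify $g_j=\dot\Delta_j f$ through the $\mathcal D'$ limit. A diagonal subsequence over $j$ handles all blocks at once.

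Next I take the weak lower semicontinuity blockwise. Let $w_j(t):=2^{j\gamma}\|\dot\Delta_jf(t)\|_{L^p}$ and $w_{j,n}(t):=2^{j\gamma}\|\dot\Delta_jf_n(t)\|_{L^p}$. Duality against $L^{p'}$-valued test functions gives, for every nonnegative $\chi\in L^{q'}(0,T)$,
\[
 \int\chi\, w_j\dd t\le\liminf_n\int\chi\, w_{j,n}\dd t .
\]
Summing over finitely many $|j|\le N$ yields $\int \chi\sum_{|j|\le N}w_j\le \liminf_n\int\chi\,\|f_n(t)\|_{\dot B^\gamma_{p,1}}\dd t$. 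Letting $N\to\infty$ and taking the supremum over $\|\chi\|_{L^{q'}}\le1$ gives \eqref{eq:bochner-fatou} for $q>1$, since then $L^q=(L^{q'})^*$ on the scalar level.

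For $q=1$, lower semicontinuity of the scalar $L^1$ norm still holds by taking $\chi\equiv1$, so \eqref{eq:bochner-fatou} itself is immediate. The real issue is to identify the limit and confirm membership in the Bochner space rather than merely a bound on $\sum_j w_j$. Here I use uniform integrability: by Dunford--Pettis, $\|\dot\Delta_jf_n(\cdot)\|_{L^p}$ is uniformly integrable. This gives a weakly convergent subsequence of $\dot\Delta_jf_n$ in $L^1(0,T;L^p)$, rather than the mere weak-$*$ measure convergence that would permit a singular limit, and that limit is $\dot\Delta_jf$. Strong measurability of $f$ into $\dot B^\gamma_{p,1}$ follows from measurability of each block and norm-convergence of the partial sums $\sum_{|j|\le N}\dot\Delta_jf(t)$ for a.e.\ $t$. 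That convergence holds because $\sum_j w_j(t)<\infty$ a.e.\ by Fatou in $t$. The $\mathcal S'_h$ realization is preserved because the low-frequency sum converges in $L^\infty$-type norms for $\gamma<d/p$, or is handled by the standard realization convention in the other cases.

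I expect the main obstacle to be exactly this identification step: making sure the limit belongs to the realized homogeneous space and that the blocks of the $\mathcal D'$ limit coincide with the weak limits of the blocks. This is also where $q=1$ needs uniform integrability, to rule out concentration in time. The componentwise and intersection statements then follow by applying the argument to each component and each space along a common subsequence.
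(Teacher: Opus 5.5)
Your argument is the paper's argument: blockwise weak (or weak-$*$) compactness of $\dot\Delta_jf_n$ in $L^q(0,T;L^p)$, identification of the block limits through the distributional limit, lower semicontinuity on finitely many blocks, monotone convergence in the number of blocks, and, for $q=1$, uniform integrability with reflexivity of $L^p$ to rule out concentration in time. You get the lower semicontinuity by testing against nonnegative $\chi\in L^{q'}$, while the paper uses convexity of the finite-block functional; these are the same step. You are more careful than the paper on two points it only asserts. The first is that $\mathcal D'$ convergence of $f_n$ really gives convergence of the blocks, since the kernel of $\dot\Delta_j$ is not compactly supported. The second is strong measurability of $t\mapsto f(t)$ in $\dot B^\gamma_{p,1}$.

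The step that cannot be completed is your last clause: that for $\gamma\ge d/p$ the $\mathcal S'_h$ realization is ``handled by the standard realization convention''. Here is a counterexample.
Take $h\in\mathcal S(\R^d)$ real with $\hat h\ge0$, $\hat h\not\equiv0$, supported in a fixed annulus, and set $f_n(t,x)=h(2^{-n}x)$.
Then $\|f_n(t)\|_{\dot B^\gamma_{p,1}}=2^{-n(\gamma-d/p)}\|h\|_{\dot B^\gamma_{p,1}}$ is bounded for $\gamma\ge d/p$, and uniform integrability in time is trivial.
Yet $f_n\to h(0)\neq0$ in $\mathcal D'$.
Every dyadic block of this constant vanishes, so \eqref{eq:bochner-fatou} holds trivially. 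But the limit is not in $\mathcal S'_h$, so $f\notin L^q_T\dot B^\gamma_{p,1}$ in the realized sense.
For $\gamma\ge d/p$ the statement therefore holds only modulo polynomials, or under an a priori realization hypothesis. The paper's proof passes over the same point.

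For $\gamma<d/p$ your low-frequency remark is exactly what is needed. It is cleaner to use it for the identification step too, instead of your truncation route:
\begin{itemize}
 \item the low frequencies are uniformly small, since $\|\sum_{j<j_0}\dot\Delta_jf_n\|_{L^q_TL^\infty}\lesssim 2^{j_0(d/p-\gamma)}\sup_n\|f_n\|_{L^q_T\dot B^\gamma_{p,1}}$;
 \item the blocks with $j\ge j_1$ are uniformly small against $\psi\otimes\theta$ with $\theta\in C_c^\infty$;
 \item the finitely many middle blocks converge weakly.
\end{itemize}
This gives $f=\sum_j g_j$, with $g_j$ the weak block limits. That identifies $\dot\Delta_jf=g_j$ and places $f(t)$ in $\mathcal S'_h$ at once.

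This covers the uses in which the intersection clause supplies a bound at an index below $d/p$. Examples are Proposition~\ref{prop:closed-piola} and the velocity in Appendix~\ref{sec:existence-appendix}.
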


\begin{proof}
For a finite set $J\subset\mathbb Z$, set
\[
 N_J(f)(t)=\sum_{j\in J}2^{j\gamma}
          \|\dot\Delta_jf(t)\|_{L^p}.
\]
The blocks $\dot\Delta_jf_n$ converge to $\dot\Delta_jf$ in distributions.  If $1<q<\infty$, reflexivity gives weak convergence, after extraction, in the finite product of Bochner spaces
$L^q(0,T;(L^p)^J)$.  If $q=\infty$, Banach--Alaoglu gives weak-star convergence in
$L^\infty(0,T;(L^p)^J)$.  If $q=1$, the additional uniform integrability assumption and the reflexivity of $L^p$ give weak compactness in
$L^1(0,T;(L^p)^J)$ for each finite $J$; the distributional limit identifies the weak limit.  In all three cases, lower semicontinuity of the convex functional
\[
 (g_j)_{j\in J}\longmapsto
 \left\|\sum_{j\in J}2^{j\gamma}\|g_j(t)\|_{L^p}\right\|_{L^q(0,T)}
\]
yields
\[
 \|N_J(f)\|_{L^q(0,T)}
 \le \liminf_{n\to\infty}\|N_J(f_n)\|_{L^q(0,T)} .
\]
Since $N_J(f)$ increases to
$\sum_j2^{j\gamma}\|\dot\Delta_jf\|_{L^p}$ as $J\uparrow\mathbb Z$, monotone convergence for $q<\infty$, and the corresponding essential-supremum monotone convergence for $q=\infty$, yield \eqref{eq:bochner-fatou}.  This is the standard Fatou property for homogeneous Besov spaces realized in $\mathcal S'_h$; see also \cite[Chapter~2]{BCD}.
\end{proof}

\begin{remark}\label{rem:fatou-q1}
The extra hypothesis in the endpoint $q=1$ is not cosmetic.  A bounded sequence in $L^1(0,T;X)$ may concentrate in time and converge in distributions to a vector-valued measure rather than to a Bochner function.  Thus a pure distributional-convergence hypothesis is sufficient for $q>1$ and $q=\infty$, but not for $q=1$.
\end{remark}

\begin{lemma}[Composition by time-dependent volume-preserving flows]\label{lem:composition}
Let $1<p<\infty$ and $-1<r<1$.  If $\Phi$ is a volume-preserving bi-Lipschitz homeomorphism of $\mathbb R^d$, then
\begin{equation}\label{eq:composition-bound}
 \|f\circ\Phi\|_{\dot B^r_{p,1}}
 \le C_r\bigl(\operatorname{Lip}\Phi,
                \operatorname{Lip}\Phi^{-1}\bigr)
       \|f\|_{\dot B^r_{p,1}}.
\end{equation}
Suppose, moreover, that all $\Phi_n$ are volume-preserving bi-Lipschitz homeomorphisms, that $\Phi_n,\Phi_n^{-1}$ have uniform Lipschitz bounds, and that
$\Phi_n\to\Phi$ uniformly.  Then, for every fixed
$f\in\dot B^r_{p,1}$,
\begin{equation}\label{eq:composition-strong}
 f\circ\Phi_n\longrightarrow f\circ\Phi
 \quad\hbox{in }\dot B^r_{p,1}.
\end{equation}
If the maps depend on time, converge uniformly on
$[0,T]\times\mathbb R^d$, and have uniform bi-Lipschitz bounds, then:
\begin{enumerate}[label=\textup{(\roman*)}]
 \item for $1\le q<\infty$ and $f\in L^q(0,T;\dot B^r_{p,1})$, convergence holds in the same Bochner space;
 \item for $f\in C([0,T];\dot B^r_{p,1})$, convergence is uniform in time.
\end{enumerate}
The bounds are uniform for the family of maps.
\end{lemma}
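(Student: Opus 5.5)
The plan is to prove the bound \eqref{eq:composition-bound} first at the endpoints $r=0$ and $r=\pm1$ in suitable form, then interpolate, and finally obtain the convergence statements by density plus uniform bounds.

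\emph{Step 1: the bound.} For $r=0$ I would not use $\dot B^0_{p,1}$ directly. Instead, volume preservation gives the exact identity $\|f\circ\Phi\|_{L^p}=\|f\|_{L^p}$. For $r=1$, the chain rule gives
\[
\nabla(f\circ\Phi)=(D\Phi)^\top(\nabla f)\circ\Phi,
\]
so $\|f\circ\Phi\|_{\dot W^{1,p}}\le\operatorname{Lip}\Phi\,\|f\|_{\dot W^{1,p}}$. Real interpolation $(L^p,\dot W^{1,p})_{r,1}=\dot B^r_{p,1}$ for $0<r<1$ then yields \eqref{eq:composition-bound} with a constant depending only on $\operatorname{Lip}\Phi$.

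For $-1<r<0$ I would argue by duality. Since $1<p<\infty$, we have $(\dot B^{-r}_{p',\infty})'$-type pairings; more simply, $\dot B^r_{p,1}$ is normed by pairing against $\dot B^{-r}_{p',\infty}$. The change of variables $y=\Phi(x)$ has Jacobian $1$, so
\[
\langle f\circ\Phi,\varphi\rangle=\langle f,\varphi\circ\Phi^{-1}\rangle.
\]
The positive-index bound in $\dot B^{-r}_{p',\infty}$ follows by the same interpolation, now with $(\cdot,\cdot)_{-r,\infty}$, and costs $\operatorname{Lip}\Phi^{-1}$. This duality step is where care with the realization in $\mathcal S'_h$ is needed. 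One must check that $f\circ\Phi$ is well defined for negative $r$, which I would do by defining it through density of Schwartz functions with compactly supported Fourier transform away from $0$ together with the a priori bound. The case $r=0$ itself follows by interpolating between $r=\pm1/2$.

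\emph{Step 2: strong convergence.} For fixed $f$, I would take $g$ smooth with $\hat g$ compactly supported away from the origin and $\|f-g\|_{\dot B^r_{p,1}}<\varepsilon$. The uniform bound reduces matters to showing $g\circ\Phi_n\to g\circ\Phi$ in $\dot B^r_{p,1}$. This is the main obstacle, because uniform convergence of the maps gives no convergence of derivatives. My approach is to interpolate between two estimates:
\begin{itemize}
\item an $L^p$ estimate, $\|g\circ\Phi_n-g\circ\Phi\|_{L^p}\to0$, obtained by dominated convergence using $|g\circ\Phi_n-g\circ\Phi|\le\|\nabla g\|_\infty\|\Phi_n-\Phi\|_\infty$ together with spatial decay of $g$ and uniform Lipschitz control of $\Phi_n^{-1}$, which keeps preimages of large balls uniformly outside compacts;
\item a uniform bound in $\dot B^{r'}_{p,1}$ for some $r'$ slightly larger than $|r|$, with $r'<1$, which holds by Step 1.
\end{itemize}
For $r>0$, interpolating $L^p$ against $\dot B^{r'}$ gives convergence in $\dot B^r_{p,1}$. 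For $r<0$ I would use duality again, or the embedding $L^p\cap\dot B^{r'}_{p,1}\hookrightarrow\dot B^{r}_{p,1}$ together with low-frequency control from $L^p\subset\dot B^0_{p,\infty}$, interpolated with a uniform $\dot B^{r''}_{p,1}$ bound with $r''<r$. The case $r=0$ follows by interpolating between $L^p$ and a uniform $\dot B^{\pm\delta}$ bound.

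\emph{Step 3: time-dependent statements.}
\begin{itemize}
\item For (i), pointwise-in-$t$ convergence follows from Step 2. The uniform bound $\|f(t)\circ\Phi_n(t)\|\le C\|f(t)\|$ provides an integrable dominant, and dominated convergence in $L^q$ finishes the argument.
\item For (ii), I would use a compactness argument. The range $\{f(t)\}$ is compact in $\dot B^r_{p,1}$, so for each $\varepsilon$ it can be covered by finitely many smooth $g_k$. The Step 2 estimate for each $g_k$ is uniform in $t$, since it uses only $\sup_{t,x}|\Phi_n-\Phi|$ and uniform Lipschitz bounds. Combined with the uniform operator bound, this gives uniform-in-time convergence.
\end{itemize}
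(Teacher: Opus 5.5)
Your proposal is correct and follows the paper's proof in all essentials. The parallels are these:
\begin{itemize}
\item For $0<r<1$, both arguments use the $L^p$ isometry together with Lipschitz control; your interpolation $(L^p,\dot W^{1,p})_{r,1}$ is equivalent to the paper's difference-quotient characterization.
\item For $-1<r<0$, both use duality with $\dot B^{-r}_{p',\infty}$, and both obtain $r=0$ by interpolation.
\item For strong convergence, both combine $L^p$ convergence for nice approximants with uniform bounds at two neighbouring indices. Your interpolation inequalities play the role of the paper's low/middle/high frequency splitting.
\item Both conclude with density and the uniform operator bound, dominated convergence for (i), and a finite $\varepsilon$-net for (ii).
\end{itemize}
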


\begin{proof}
This is the standard volume-preserving composition theorem in the range $-1<r<1$; see the homogeneous composition results in \cite{BCD,CheminBook}.  We work throughout in the homogeneous realization $\mathcal S'_h$; the polynomial ambiguity and the low-frequency tail are handled by finite Littlewood--Paley truncations and the $\ell^1$ Besov summability.  We use only the measure-preserving property and the two Lipschitz constants, so the statement applies to the critical flows constructed below.  The proof is recalled to record the uniform-in-time strong convergence used later.  Write $T_\Phi f=f\circ\Phi$.  If $0<r<1$, the difference-quotient characterization of $\dot B^r_{p,1}$ and the identities
\[
 \|f\circ\Phi\|_{L^p}=\|f\|_{L^p},
 \qquad
 |\Phi(x+h)-\Phi(x)|\le \operatorname{Lip}\Phi\,|h|
\]
give \eqref{eq:composition-bound}; applying the same argument to $\Phi^{-1}$ controls the constants uniformly in terms of the two Lipschitz constants.  If $r=-\rho\in(-1,0)$, then for Schwartz functions, and hence by density in the homogeneous realization $\mathcal S'_h$,
\[
 \langle T_\Phi f,\varphi\rangle
 =\langle f,\varphi\circ\Phi^{-1}\rangle.
\]
The positive-index composition bound on $\dot B^\rho_{p',\infty}$ and the duality between $\dot B^{-\rho}_{p,1}$ and $\dot B^\rho_{p',\infty}$ give the negative-index estimate.  The case $r=0$ follows by interpolation between two indices $-\varepsilon$ and $\varepsilon$.

We next prove strong convergence.  First let $f$ be a finite Littlewood--Paley sum.  Then $f\in L^p$ and may be approximated in $L^p$ by functions in $C_c^\infty$.  For such a compactly supported smooth function, uniform convergence of $\Phi_n$ to $\Phi$, together with the uniform bi-Lipschitz bounds, gives pointwise convergence and a common bounded support for the composed functions; hence
\[
 \|f\circ\Phi_n-f\circ\Phi\|_{L^p}\longrightarrow0.
\]
The same conclusion for a finite Littlewood--Paley sum follows from the $L^p$ isometry of volume-preserving composition and density.

Choose $-1<a<r<b<1$.  The functions
$h_n=f\circ\Phi_n-f\circ\Phi$ are uniformly bounded in both
$\dot B^a_{p,1}$ and $\dot B^b_{p,1}$.  For an integer $J>0$, split the Besov norm into low, middle, and high frequencies.  The uniform endpoint bounds give
\[
 \sum_{j<-J}2^{jr}\|\dot\Delta_jh_n\|_{L^p}
 \lesssim 2^{-J(r-a)}\|h_n\|_{\dot B^a_{p,1}},
\]
\[
 \sum_{j>J}2^{jr}\|\dot\Delta_jh_n\|_{L^p}
 \lesssim 2^{-J(b-r)}\|h_n\|_{\dot B^b_{p,1}},
\]
whereas the finitely many middle blocks tend to zero because
$h_n\to0$ in $L^p$.  Letting first $n\to\infty$ and then $J\to\infty$ proves \eqref{eq:composition-strong} for finite sums.  For a general $f\in\dot B^r_{p,1}$, approximate $f$ by its finite Littlewood--Paley sums and use the uniform operator bound \eqref{eq:composition-bound}.  The use of the summability index $1$ is important here, since those finite sums converge in the full homogeneous Besov norm.

If the maps depend on time and converge uniformly on
$[0,T]\times\mathbb R^d$, the preceding proof is uniform in $t$: for compactly supported smooth $f$, uniform continuity gives uniform $L^p$ convergence, and all frequency-splitting constants depend only on the common bi-Lipschitz bound.  Therefore, for each fixed $f\in\dot B^r_{p,1}$,
\begin{equation}\label{eq:composition-uniform-fixed}
 \sup_{0\le t\le T}
 \|f\circ\Phi_n(t)-f\circ\Phi(t)\|_{\dot B^r_{p,1}}
 \longrightarrow0.
\end{equation}
For $f\in L^q_T\dot B^r_{p,1}$, pointwise convergence and
\[
 \|f(t)\circ\Phi_n(t)-f(t)\circ\Phi(t)\|_{\dot B^r_{p,1}}
 \le C\|f(t)\|_{\dot B^r_{p,1}}
\]
allow the use of dominated convergence.  Finally, if
$f\in C_T\dot B^r_{p,1}$, its range is compact.  Cover that compact set by a finite $\varepsilon$-net, apply \eqref{eq:composition-uniform-fixed} to the finitely many centers, and use the uniform operator bound for the approximation error.  This proves uniform convergence in time.
\end{proof}

\begin{lemma}[Tame bounds for admissible laws]\label{lem:constitutive-tame}
Let $(\bar a,\mathcal W,\mathbb S,\mathbb G)$ be admissible.  For every $R>0$ there is $C_R$ such that, if
$\norm{r_i}{\B^s_{p,1}}\le R$ and $M_i\in\dot B^s_{p,1}$, then
\begin{align}
 \norm{\mathscr S(r_1)-\mathscr S(r_2)}{\B^s_{p,1}}
 &\le C_R\norm{r_1-r_2}{\B^s_{p,1}},\label{eq:stress-tame}\\
 \norm{\mathscr G(r_1,M_1)-\mathscr G(r_2,M_2)}{\B^s_{p,1}}
 &\le C_R\bigl(1+\norm{M_1}{\B^s_{p,1}}
                    +\norm{M_2}{\B^s_{p,1}}\bigr)
       \norm{r_1-r_2}{\B^s_{p,1}}\notag\\
 &\quad+C_R\norm{M_1-M_2}{\B^s_{p,1}}.
 \label{eq:G-tame}
\end{align}
Moreover, if $\norm{r}{\B^s_{p,1}}\le R$ and $M\in\dot B^s_{p,1}$, then
\begin{equation}\label{eq:G-single}
 \norm{\mathscr G(r,M)}{\B^s_{p,1}}
 \le C_R\bigl(\norm{r}{\B^s_{p,1}}+\norm{M}{\B^s_{p,1}}\bigr).
\end{equation}
\end{lemma}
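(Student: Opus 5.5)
The plan is to reduce all three estimates to Lemma~\ref{lem:standard-besov}: the smooth-map difference estimate (iv) and the algebra property (i). Fix $R>0$. Since $\dot B^s_{p,1}\hookrightarrow L^\infty$ with a universal constant, the bound $\norm{r_i}{\B^s_{p,1}}\le R$ places the ranges of $r_i$ in a fixed ball $B_{CR}\subset\mathcal V$. On a neighbourhood of $\bar a+B_{CR}$ the maps $\mathbb S$, $g$ and $\mathbb L$ have bounded derivatives by (A1)--(A2). All constants below may therefore depend on $R$ and on these derivative bounds.

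For \eqref{eq:stress-tame}, set $\Phi(w):=\mathbb S(\bar a+w)-\mathbb S(\bar a)$. This map is smooth near $B_{CR}$ and satisfies $\Phi(0)=0$, and $\mathscr S(r)=\Phi(r)$. Lemma~\ref{lem:standard-besov}(iv) then gives \eqref{eq:stress-tame} directly. The hypothesis $\Phi(0)=0$ is exactly why the equilibrium stress was subtracted: it ensures that $\Phi(r)$ belongs to the homogeneous space.

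For \eqref{eq:G-tame}, use (A2) to write
\[
 \mathscr G(r,M)=g(\bar a+r)+\mathbb L(\bar a+r)[M].
\]
Set $\Psi(w):=g(\bar a+w)$, so that $\Psi(0)=0$. Split $\mathbb L(\bar a+r)=\mathbb L(\bar a)+\Lambda(r)$, where $\Lambda(w):=\mathbb L(\bar a+w)-\mathbb L(\bar a)$ is a smooth $\mathcal L(\R^{d\times d},\mathcal V)$-valued map with $\Lambda(0)=0$. This expansion follows the affine convention of the paper, so that no constant is ever assigned a homogeneous norm. The difference then decomposes as
\[
 \mathscr G(r_1,M_1)-\mathscr G(r_2,M_2)
 =\bigl[\Psi(r_1)-\Psi(r_2)\bigr]
 +\mathbb L(\bar a)[M_1-M_2]
 +\bigl[\Lambda(r_1)-\Lambda(r_2)\bigr][M_1]
 +\Lambda(r_2)[M_1-M_2].
\]
\begin{itemize}
 \item The first term is bounded by $C_R\norm{r_1-r_2}{\B^s_{p,1}}$ by Lemma~\ref{lem:standard-besov}(iv).
 \item The second term is a constant linear map applied to $M_1-M_2$, so it is bounded by $C\norm{M_1-M_2}{\B^s_{p,1}}$.
 \item For the third term, use the algebra property (i) in finite-dimensional coordinates, since the bilinear pairing of $\Lambda$ with $M$ is a fixed bilinear form. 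Then apply (iv) to $\Lambda$ to get the bound $C_R\norm{r_1-r_2}{\B^s_{p,1}}\norm{M_1}{\B^s_{p,1}}$.
 \item For the fourth term, apply (iv) with $r_1=r_2$ replaced by $0$, which gives $\norm{\Lambda(r_2)}{\B^s_{p,1}}\le C_RR$. The algebra property then bounds the term by $C_R\norm{M_1-M_2}{\B^s_{p,1}}$.
\end{itemize}
Summing these four bounds gives \eqref{eq:G-tame}. The resulting estimate is in fact sharper than stated, since only $\norm{M_1}{\B^s_{p,1}}$ appears in front of $\norm{r_1-r_2}{\B^s_{p,1}}$.

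Estimate \eqref{eq:G-single} is the special case $r_2=0$, $M_2=0$ of \eqref{eq:G-tame}, because $\mathscr G(0,0)=g(\bar a)=0$ by (A2). Taken literally, that specialization yields $C_R(1+\norm{M}{\B^s_{p,1}})\norm{r}{\B^s_{p,1}}+C_R\norm{M}{\B^s_{p,1}}$, which contains a bilinear term. I would therefore redo the decomposition directly:
\[
 \mathscr G(r,M)=\Psi(r)+\mathbb L(\bar a)[M]+\Lambda(r)[M].
\]
The term $\Lambda(r)[M]$ is bounded by $C_RR\norm{M}{\B^s_{p,1}}$ through $\norm{\Lambda(r)}{\B^s_{p,1}}\le C_RR$. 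This gives the stated linear form with $R$ absorbed into $C_R$.

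The only delicate point is bookkeeping of the affine constants: one must never write $\norm{\mathbb L(\bar a+r)}{\B^s_{p,1}}$, and must always expand around $\bar a$ so that every factor placed in $\dot B^s_{p,1}$ vanishes at $r=0$. The invariance condition (A3) plays no role in the estimates. It only guarantees that $\mathscr G$ takes values in $\mathcal W$ when $r$ is $\mathcal W$-valued.
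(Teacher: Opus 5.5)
Your proof is correct and follows the paper's route. The paper's proof simply cites admissibility, the smooth-map estimate of Lemma~\ref{lem:standard-besov}(iv), the algebra property, and the role of $g(\bar a)=0$; your splitting $\mathbb L(\bar a+r)=\mathbb L(\bar a)+\Lambda(r)$ supplies exactly the bookkeeping that one-line argument leaves implicit, and your detour for \eqref{eq:G-single} is harmless but unnecessary, since the bilinear term from specializing \eqref{eq:G-tame} is absorbed using $\norm{r}{\B^s_{p,1}}\le R$.
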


\begin{proof}
Use Definition~\ref{def:admissible}, Lemma~\ref{lem:standard-besov}(iv), and the fact that $\B^s_{p,1}$ is an algebra.  The condition $g(\bar a)=0$ is exactly what is needed for the zero-order term to belong to the homogeneous space.
\end{proof}

\begin{lemma}[Endpoint transport--diffusion estimate]\label{lem:endpoint-transport}
Let $1<p<\infty$, $s=d/p>0$, let $u$ be divergence free with
$u\in L^1(0,T;\dot B^{s+1}_{p,1})$, and assume
$\rho_0\in\dot B^s_{p,1}$ and $F\in L^1(0,T;\dot B^s_{p,1})$.  Let $\rho$ solve
\[
 \partial_t\rho+u\cdot\nabla\rho-\varepsilon\Delta\rho=F,
 \qquad \rho|_{t=0}=\rho_0,
\]
where $\varepsilon\ge0$.  Then, for $0\le t\le T$,
\begin{align}\label{eq:endpoint-transport}
 &\|\rho(t)\|_{\dot B^s_{p,1}}
 +c\varepsilon\|\rho\|_{L^1(0,t;\dot B^{s+2}_{p,1})}\notag\\
 &\qquad\le
 C\exp\left(C\int_0^t\|u(\tau)\|_{\dot B^{s+1}_{p,1}}\,d\tau\right)
 \left(
  \|\rho_0\|_{\dot B^s_{p,1}}
  +\|F\|_{L^1(0,t;\dot B^s_{p,1})}
 \right).
\end{align}
The solution obtained by smooth approximation belongs to $C_T\dot B^s_{p,1}$.
\end{lemma}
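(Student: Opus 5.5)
The plan is to prove \eqref{eq:endpoint-transport} by the standard Littlewood--Paley energy method. The only delicate point is the endpoint index $s=d/p$. There the commutator with the transport field must be estimated using only $\nabla u\in\dot B^{s}_{p,1}\hookrightarrow L^\infty$, which is exactly the critical algebra case of Lemma~\ref{lem:standard-besov}(i).

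First I would work with smooth data: mollify $\rho_0$, $F$ and $u$ (keeping $\operatorname{div}u=0$), and solve the smooth transport--diffusion problem classically. For such solutions, apply $\dot\Delta_j$ to obtain
\[
 \partial_t\rho_j+u\cdot\nabla\rho_j-\varepsilon\Delta\rho_j=\dot\Delta_jF+R_j,\qquad R_j:=[u\cdot\nabla,\dot\Delta_j]\rho .
\]
Multiply by $|\rho_j|^{p-2}\rho_j$ and integrate. Incompressibility removes the transport term. The diffusion term is handled by the Bernstein-type lower bound of \cite{BCD},
\[
 -\int\Delta\rho_j\,|\rho_j|^{p-2}\rho_j\ge c\,2^{2j}\|\rho_j\|_{L^p}^p .
\]
This gives
\[
 \frac{d}{dt}\|\rho_j\|_{L^p}+c\varepsilon2^{2j}\|\rho_j\|_{L^p}\le\|\dot\Delta_jF\|_{L^p}+\|R_j\|_{L^p}.
\]
Next integrate in time, multiply by $2^{js}$, and sum over $j$; the $\ell^1$ summation makes this legitimate without losses. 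The sum of the diffusion terms $c\varepsilon\sum_j2^{js}2^{2j}\|\rho_j\|_{L^1_tL^p}$ is equivalent to $c\varepsilon\|\rho\|_{L^1_t\dot B^{s+2}_{p,1}}$, because $\ell^1$ summation and time integration commute.

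The main obstacle is the commutator estimate
\[
 \sum_j2^{js}\|R_j\|_{L^p}\le C\|\nabla u\|_{\dot B^{s}_{p,1}}\|\rho\|_{\dot B^s_{p,1}}\simeq C\|u\|_{\dot B^{s+1}_{p,1}}\|\rho\|_{\dot B^s_{p,1}}
\]
at the endpoint $s=d/p$, where the usual condition $-d/p<s<1+d/p$ leaves no slack above. I would use Bony's decomposition $u\cdot\nabla\rho=T_u\nabla\rho+T_{\nabla\rho}u+R(u,\nabla\rho)$.
\begin{itemize}
 \item The paraproduct commutator $[T_{u^k},\dot\Delta_j]\partial_k\rho$ is bounded by $\|\nabla u\|_{L^\infty}2^{-j}\|\nabla\rho_{j'}\|_{L^p}$ via the mean-value commutator lemma. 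It is closed using $\dot B^s_{p,1}\hookrightarrow L^\infty$.
 \item The term $T_{\partial_k\rho}u^k$ is estimated using $\|S_{j'-1}\nabla\rho\|_{L^\infty}\lesssim 2^{j'(1-s)}\|\rho\|_{\dot B^s_{p,1}}$. The exponent $1-s>0$ pairs against the high frequencies of $u\in\dot B^{s+1}_{p,1}$.
 \item The remainder $R(u,\nabla\rho)=\partial_k R(u^k,\rho)$ uses the divergence-free form. Its regularity index is $2s+1>0$, which is positive, so the remainder estimate into $\dot B^{s}_{p,1}$ holds (losing integrability via $L^{p/2}$ and Bernstein, which requires $s>0$).
\end{itemize}
Gronwall's inequality then yields \eqref{eq:endpoint-transport} for the smooth solutions, with constants independent of the mollification and of $\varepsilon\ge0$.

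Finally, I would pass to the limit. Differences of approximate solutions satisfy the same equation, with the extra source $(u^n-u^m)\cdot\nabla\rho^m$. Estimating this source in the weaker space $\dot B^{s-1}_{p,1}$ is not possible at the endpoint, since that would need $s<1/2$-type restrictions. So I would avoid that route and instead argue by linearity in the data together with the uniform bound, using frequency-truncated data, and obtain convergence in $C_T\dot B^{s-1}_{p,1}$ or in $L^p_{\rm loc}$ from the uniform bounds and the equation. Lemma~\ref{lem:bochner-besov-fatou} then transfers the bound to the limit. Membership in $C_T\dot B^s_{p,1}$ follows from the $\ell^1$ structure: the high-frequency tail $\sum_{j>J}2^{js}\|\rho_j\|_{L^\infty_tL^p}$ is uniformly small, by rerunning the blockwise estimate with the tail of the data. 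Each finite block is continuous in time, because $\partial_t\rho_j\in L^1_tL^p$.
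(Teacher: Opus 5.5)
Your proposal is the paper's argument: dyadic localization, the Bernstein lower bound for the dissipative term, the critical commutator estimate, Gronwall, and time continuity by $\ell^1$-summation of continuous dyadic blocks. The paper simply cites the commutator lemma from \cite{BCD,DanchinTransport}, whereas you unpack it via Bony's decomposition.

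That unpacking has a few slips, none of which breaks the argument:
\begin{enumerate}
 \item $s=d/p$ lies strictly inside the standard range $s<1+d/p$, so the commutator has a full derivative of slack at this index.
 \item The bound $2^{j'(1-s)}\|\rho\|_{\dot B^s_{p,1}}$ holds for $\|\dot S_{j'-1}\nabla\rho\|_{L^p}$, paired with $\|\dot\Delta_{j'}u\|_{L^\infty}\lesssim 2^{j's}\|\dot\Delta_{j'}u\|_{L^p}$. In $L^\infty$ the correct bound is $2^{j'}\|\rho\|_{\dot B^s_{p,1}}$, paired with $\|\dot\Delta_{j'}u\|_{L^p}$.
 \item The $L^{p/2}$ Bernstein step in the remainder needs $p\ge2$. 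The $L^\infty\times L^p$ pairing works for all $1<p<\infty$.
 \item The pieces $\dot T_{\partial_k\dot\Delta_j\rho}u^k+\dot R(u^k,\partial_k\dot\Delta_j\rho)$ of $u\cdot\nabla\dot\Delta_j\rho$ are missing from your list. They are bounded in the same way.
 \item The difference source is not blocked by any $s<1/2$ restriction. Indeed $(u^n-u^m)\cdot\nabla\rho^m=\operatorname{div}\bigl(\rho^m(u^n-u^m)\bigr)$ is controlled in $\dot B^{s-1}_{p,1}$ by the algebra property whenever $u\in L^2_T\dot B^s_{p,1}$, as in all applications. Your compactness detour is therefore optional, not forced.
\end{enumerate}
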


\begin{proof}
Apply $\dot\Delta_j$ to the equation.  The dissipative term gives the usual Bernstein lower bound
$\varepsilon 2^{2j}\|\dot\Delta_j\rho\|_{L^p}$.  The commutator estimate at the algebra endpoint reads
\[
 \sum_j2^{js}
 \|[u\cdot\nabla,\dot\Delta_j]\rho\|_{L^p}
 \lesssim
 \|u\|_{\dot B^{s+1}_{p,1}}\|\rho\|_{\dot B^s_{p,1}},
\]
which is the standard critical transport estimate for divergence-free velocities; see \cite[Chapter~3]{BCD} and \cite{DanchinTransport}.  Multiplying the dyadic inequality by $2^{js}$, summing in $j$, and applying Gronwall gives \eqref{eq:endpoint-transport}.  The time-continuity follows first for smooth approximations and then by summing the continuous dyadic blocks in $\ell^1$, using the estimate above to pass to the limit.
\end{proof}

\begin{proposition}[Local construction and uniform lifespan]\label{prop:local-input}
For every admissible quadruple and every datum in the phase space of Theorem~\ref{thm:abstract-main}, a smooth parabolic approximation yields a local solution satisfying
\begin{align}
 u&\in C_T\B^{s-1}_{p,1}\cap L^1_T\B^{s+1}_{p,1},
 &r&\in C_T\B^s_{p,1}(\mathcal W),\label{eq:local-class}\\
 u&\in L^2_T\B^s_{p,1},
 &P&\in L^1_T\B^s_{p,1},\label{eq:pressure-class}\\
 D_tu:=\partial_tu+u\cdot\nabla u
 &\in L^1_T\B^{s-1}_{p,1},
 &\partial_tr+u\cdot\nabla r&\in L^1_T\B^s_{p,1}.
 \label{eq:material-class}
\end{align}
If $(u_{0,n},r_{0,n})\to(u_0,r_0)$ in the critical phase space, then the corresponding approximations and limiting solutions have a common positive lifespan for all sufficiently large $n$.  On a possibly shorter common interval, the norms above are uniformly bounded and their time-integrable parts are uniformly absolutely continuous at the initial time.
\end{proposition}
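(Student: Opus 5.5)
The plan is to build the solution as the limit of a regularized system. I add artificial diffusion $\varepsilon\Delta r$ to the transport equation and mollify the data with a finite Littlewood--Paley truncation $(S_n u_0,S_n r_0)$. For fixed $\varepsilon>0$ and $n$, the regularized system
\[
\partial_tu+u\cdot\nabla u-\nu\Delta u+\nabla P=\diver\mathscr S(r),\qquad
\partial_tr+u\cdot\nabla r-\varepsilon\Delta r=\mathscr G(r,\nabla u)
\]
is a semilinear parabolic system. It has a smooth local solution in subcritical spaces, and this solution lives in $\mathcal W$ by (A3), since $\mathcal W$ is invariant under both the heat flow and the right-hand side.

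The core of the argument is an a priori bound uniform in $(\varepsilon,n)$. I split $u=u_L+\tilde u$, where $u_L=e^{\nu t\Delta}u_0$ is the free Stokes part. For $u_L$, the $\ell^1$ summability gives
\[
\|u_L\|_{L^1_T\dot B^{s+1}_{p,1}}\to0\quad\text{as }T\to0,
\]
and this holds uniformly on compact sets of data, in particular along a convergent sequence $u_{0,n}\to u_0$. Here I cut the frequencies at a level $J$ chosen from the tail of $u_0$ alone. For $\tilde u$, the maximal regularity estimate for the Stokes system in $\dot B^{s-1}_{p,1}$ needs three bounds.
\begin{enumerate}[label=\textup{(\alph*)}]
 \item The convection term $u\cdot\nabla u=\diver(u\otimes u)$ is bounded in $L^1_T\dot B^{s-1}_{p,1}$ via $\|u\otimes u\|_{\dot B^s_{p,1}}\lesssim\|u\|_{\dot B^s_{p,1}}^2$ and Lemma~\ref{lem:standard-besov}(iii). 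No product below the algebra threshold appears here.
 \item The stress term satisfies $\|\diver\mathscr S(r)\|_{L^1_T\dot B^{s-1}}\le TC_R\|r\|_{L^\infty_T\dot B^s}$ by \eqref{eq:stress-tame}.
 \item For $r$, Lemma~\ref{lem:endpoint-transport} with $F=\mathscr G(r,\nabla u)$ and \eqref{eq:G-single} give
 \[
 \|r\|_{L^\infty_T\dot B^s}\le Ce^{C\|u\|_{L^1\dot B^{s+1}}}\bigl(\|r_0\|_{\dot B^s}+C_R(T\|r\|_{L^\infty\dot B^s}+\|u\|_{L^1\dot B^{s+1}})\bigr).
 \]
\end{enumerate}
A continuity (bootstrap) argument then closes the estimates on a ball of radius $2C\|r_0\|+1$ in $r$ and of small $L^1\dot B^{s+1}$ size in $u$, for $T$ depending only on $\|r_0\|_{\dot B^s}$ and the tail profile of $u_0$. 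This yields the common lifespan. The pressure is recovered by $P=(-\Delta)^{-1}\diver\diver(\mathscr S(r)-u\otimes u)$, which lies in $L^1_T\dot B^s$. The material derivatives in \eqref{eq:material-class} are then read off from the equations.

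To pass to the limit $\varepsilon\to0$, $n\to\infty$, I would not prove contraction, since that is exactly what fails in this range. I would use compactness instead. The uniform bounds control $\partial_t u$ in $L^1_T\dot B^{s-2}+L^2_T\dots$ and $\partial_t r$ in $L^1_T\dot B^{s-1}$. Aubin--Lions on localized low-regularity spaces then gives strong local convergence, which is enough to pass to the limit in the nonlinearities; in $\mathscr G(r,\nabla u)$ the velocity gradient enters affinely. Lemma~\ref{lem:bochner-besov-fatou} places the limit in the class \eqref{eq:local-class}--\eqref{eq:pressure-class}. Its $q=1$ hypothesis is supplied by the uniform absolute continuity discussed next. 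Time continuity with values in $\dot B^{s}_{p,1}$ and $\dot B^{s-1}_{p,1}$ follows from Lemma~\ref{lem:endpoint-transport} and from the Stokes estimate applied to the limit equations.

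The main obstacle is the last assertion: uniform absolute continuity at $t=0$ of $t\mapsto\|u_n(t)\|_{\dot B^{s+1}}$ and of the other $L^1$-in-time parts, uniformly in $n$. I would obtain it in two steps. First, I decompose $u_n=u_{L,n}+\tilde u_n$. The free parts $e^{\nu t\Delta}u_{0,n}$ are handled by convergence $u_{0,n}\to u_0$ in $\dot B^{s-1}_{p,1}$, which gives uniform integrability by an $\varepsilon/3$ argument on the frequency blocks. Second, the bootstrap estimate above shows
\[
\|\tilde u_n\|_{L^1_t\dot B^{s+1}}\lesssim t+\|u_{L,n}\|_{L^2_t\dot B^s}^2,
\]
which is uniformly small as $t\to0$. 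Applying the same reasoning over arbitrary subintervals $[t_0,t_0+h]$, rather than only $[0,t]$, gives uniform integrability on the whole interval.
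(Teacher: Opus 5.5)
Your proposal follows the paper's construction in Appendix~\ref{sec:existence-appendix} essentially step for step. The shared ingredients are: artificial diffusion in the internal equation with frequency-truncated data; the splitting $u=e^{\nu t\Delta}u_0+\tilde u$, so that the convection term costs only $\|u_L\|_{L^2_T\dot B^s_{p,1}}^2$ plus a quadratic expression in $\tilde u$ (no term $\|u_0\|_{\dot B^{s-1}_{p,1}}\|\tilde u\|_{L^1_T\dot B^{s+1}_{p,1}}$ has to be absorbed); the bootstrap with the internal radius fixed before the lifespan; compactness instead of contraction, with the affine structure of $\mathscr G$ used to identify the limit; and absolute continuity at $t=0$ by rerunning the zero-initial-data estimate on $(0,t)$.

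The one step that does not hold as written is how you place the limit in $L^1_T\dot B^{s+1}_{p,1}$. You invoke the $q=1$ case of Lemma~\ref{lem:bochner-besov-fatou} and claim its uniform-integrability hypothesis follows from ``the same reasoning over arbitrary subintervals $[t_0,t_0+h]$''. It does not, for two reasons.
\begin{enumerate}
 \item On such an interval $\tilde u_n(t_0)\neq0$ is only known to be $O(\rho)$ in $\dot B^{s-1}_{p,1}$. Restarting therefore produces the free flow $e^{\nu(t-t_0)\Delta}\tilde u_n(t_0)$. Its $L^1(t_0,t_0+h;\dot B^{s+1}_{p,1})$ norm is small uniformly in $n$ and $t_0$ only if the high-frequency tails of the family $\{\tilde u_n(t_0)\}$ are uniformly small. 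Nothing in your a priori bounds provides this; it would need an extra frequency-envelope argument.
 \item Uniform smallness on short intervals is strictly weaker than uniform integrability over arbitrary measurable sets, which is what the Dunford--Pettis step of that lemma uses. For example, the normalized densities of the stages of the Cantor construction are uniformly small on all short intervals, yet they converge to a singular measure.
\end{enumerate}

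The step is also unnecessary, and the paper avoids it. Use Lemma~\ref{lem:bochner-besov-fatou} only with $q=\infty$ and $q=2$. This gives $u\in L^\infty_T\dot B^{s-1}_{p,1}\cap L^2_T\dot B^s_{p,1}$ and $r\in L^\infty_T\dot B^s_{p,1}$. Then $u\otimes u$ and $\mathscr S(r)$ lie in $L^1_T\dot B^s_{p,1}$, so the forcing of the limiting momentum equation is in $L^1_T\dot B^{s-1}_{p,1}$. Heat maximal regularity then yields $u\in C_T\dot B^{s-1}_{p,1}\cap L^1_T\dot B^{s+1}_{p,1}$ directly; this is the same step you already use for time continuity. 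Alternatively, since the limit already lies in $L^2_T\dot B^s_{p,1}$, its dyadic blocks are Bochner functions, and the $L^1_T$ lower-semicontinuity bound holds without any uniform integrability. The proposition only asserts absolute continuity at the initial time, and your $[0,t]$ argument supplies exactly that.
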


\begin{proof}
The proof is standard but is included in \cref{sec:existence-appendix} to make the dependence on the constitutive law explicit.  It uses only Lemmas~\ref{lem:constitutive-tame} and~\ref{lem:endpoint-transport}, heat maximal regularity, and the low--high frequency construction of the lifespan; for classical maximal-regularity background see, for instance, \cite{Solonnikov,GigaSohr}.
\end{proof}

The pressure estimate used repeatedly below follows by applying divergence to the momentum equation:
\begin{equation}\label{eq:pressure-general}
 \norm{P}{L^1_T\B^s_{p,1}}
 \lesssim
 \norm{u}{L^2_T\B^s_{p,1}}^2
 +\norm{\mathscr S(r)}{L^1_T\B^s_{p,1}}.
\end{equation}

\section{The compensated transport product}\label{sec:null}

\begin{lemma}[Compensated product]\label{lem:null}
Let $2\le p<\infty$, $s=d/p>0$, $a\in\B^{s-1}_{p,1}(\R^d)^d$, and
$Y\in\B^{s+1}_{p,1}(\R^d)^d$.  The products below are understood
through the Bony decomposition, equivalently by first arguing for smooth
frequency-truncated fields and then passing to the limit by the estimates.
If $\diver a=0$, then
\begin{equation}\label{eq:null-low}
 \norm{a\cdot\nabla Y}{\B^{s-1}_{p,1}}
 \le C_0\norm{a}{\B^{s-1}_{p,1}}
       \norm{Y}{\B^{s+1}_{p,1}}.
\end{equation}
If $a,w\in\B^s_{p,1}$ and $\diver a=0$, then
\begin{equation}\label{eq:null-mid}
 \norm{a\cdot\nabla w}{\B^{s-1}_{p,1}}
 \le C_0\norm{a}{\B^s_{p,1}}\norm{w}{\B^s_{p,1}}.
\end{equation}
The same pointwise estimates hold in Bochner spaces, with time exponents combined by H\"older's inequality.  In particular,
\begin{align}
 \norm{a\cdot\nabla Y}{L^1_T\B^{s-1}_{p,1}}
 &\lesssim
 \norm{a}{L^1_T\B^{s-1}_{p,1}}
 \norm{Y}{L^\infty_T\B^{s+1}_{p,1}},\label{eq:null-time-1}\\
 \norm{a\cdot\nabla w}{L^1_T\B^{s-1}_{p,1}}
 &\lesssim
 \norm{a}{L^2_T\B^s_{p,1}}
 \norm{w}{L^2_T\B^s_{p,1}}.
 \label{eq:null-time-2}
\end{align}
\end{lemma}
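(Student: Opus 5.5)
Since $\diver a=0$, write $a\cdot\nabla Y=\sum_k a^k\partial_kY$ and split each product with the Bony decomposition
\[
 a^k\partial_kY=\T_{a^k}\partial_kY+\T_{\partial_kY}a^k+\Rem(a^k,\partial_kY).
\]
The divergence-free condition is only used on the remainder. There we use $\Rem(a^k,\partial_kY)=\partial_k\Rem(a^k,Y)-\Rem(\partial_ka^k,Y)$, and summing over $k$ the last term vanishes because $\sum_k\partial_ka^k=0$. So the remainder becomes $\sum_k\partial_k\Rem(a^k,Y)$, which costs one derivative. By Lemma~\ref{lem:standard-besov}(ii) it then suffices to bound $\Rem(a^k,Y)$ in $\B^{s}_{p,1}$.

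For \eqref{eq:null-low} the three pieces are handled as follows.
\begin{enumerate}[label=\textup{(\alph*)}]
 \item \emph{Paraproduct $\T_{a}\nabla Y$.} The standard paraproduct estimate with a negative-index low-frequency factor applies: $a\in\B^{s-1}_{p,1}\hookrightarrow\B^{-1}_{\infty,1}$ and $\nabla Y\in\B^{s}_{p,1}$. This gives $\T_a\nabla Y\in\B^{s-1}_{p,1}$, using $s-1<0$.
 \item \emph{Paraproduct $\T_{\nabla Y}a$.} Here $\nabla Y\in\B^s_{p,1}\hookrightarrow L^\infty$, so $\T_{\nabla Y}a\in\B^{s-1}_{p,1}$.
 \item \emph{Remainder $\Rem(a,Y)$.} We have $a\in\B^{s-1}_{p,1}$ and $Y\in\B^{s+1}_{p,1}$, with index sum $2s>0$. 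The remainder estimate places $\Rem(a,Y)$ in $\B^{2s}_{p/2,1}$, which needs $p\ge2$. Bernstein's embedding $\B^{2s}_{p/2,1}\hookrightarrow\B^{2s-d/p}_{p,1}=\B^{s}_{p,1}$ then finishes.
\end{enumerate}
The remainder is the step where the solenoidal structure is essential. Without it, $\Rem(a,\nabla Y)$ has index sum $2s-1$, which is $\le0$ when $p\ge2d$, and the remainder estimate fails.

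For \eqref{eq:null-mid} the same decomposition applies to $\sum_k a^k\partial_kw$.
\begin{enumerate}[label=\textup{(\alph*)}]
 \item \emph{Paraproduct $\T_a\nabla w$.} Since $a\in L^\infty$ and $\nabla w\in\B^{s-1}_{p,1}$, this lies in $\B^{s-1}_{p,1}$.
 \item \emph{Paraproduct $\T_{\nabla w}a$.} Here $\nabla w\in\B^{s-1}_{p,1}\hookrightarrow\B^{-1}_{\infty,1}$ and $a\in\B^s_{p,1}$. The negative-index paraproduct estimate gives $\B^{s-1}_{p,1}$.
 \item \emph{Remainder.} It again becomes $\sum_k\partial_k\Rem(a^k,w)$. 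Since $\Rem(a^k,w)\in\B^{2s}_{p/2,1}\hookrightarrow\B^s_{p,1}$, the remainder lies in $\B^{s-1}_{p,1}$.
\end{enumerate}
The first two pieces of \eqref{eq:null-mid} are just the usual $L^\infty\times\B^{s-1}$ bounds.

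The main technical point is to justify the identity $\sum_k\Rem(\partial_ka^k,Y)=0$ and the commutation $\partial_k\Rem=\Rem(\partial_k\cdot,\cdot)+\Rem(\cdot,\partial_k\cdot)$ in $\mathcal S'_h$. We do this first for frequency-truncated fields $S_Na$, $S_NY$, which remain divergence free since $S_N$ is a Fourier multiplier. We then pass to the limit using the uniform bounds and the $\ell^1$ convergence of truncations, which also defines the product for general data.

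The Bochner versions \eqref{eq:null-time-1} and \eqref{eq:null-time-2} follow by applying the pointwise-in-time bounds and H\"older's inequality in $t$. Measurability of $t\mapsto\|a(t)\cdot\nabla Y(t)\|$ follows from the same truncation argument.
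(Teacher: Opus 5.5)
Your proposal is correct and follows essentially the paper's proof of \eqref{eq:null-low}. Both use the same Bony splitting, and both use the solenoidal condition only to rewrite the high--high remainder as $\sum_k\partial_k\Rem(a^k,Y)$, closing with $L^{p/2}\to L^p$ Bernstein: the paper computes the paraproducts by hand rather than citing the standard theorems, and its summable kernel $2^{-2s(k-j)}$ is exactly your step $\B^{2s}_{p/2,1}\hookrightarrow\B^s_{p,1}$. For \eqref{eq:null-mid} the paper takes the shorter route $a\cdot\nabla w=\diver(w\otimes a)$ plus the algebra property of $\B^s_{p,1}$, which your Bony argument reproduces; one small correction is that in (a) the negative-index paraproduct needs the $L^\infty$-based index $-1<0$, not $s-1<0$, which can fail in the stated range $2\le p<\infty$ when $s>1$.
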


\begin{proof}
It is enough to prove the bounds for smooth frequency-truncated vector
fields.  The estimates are stable under Littlewood--Paley truncation and
therefore define the bilinear operators by continuous extension in the
stated Besov spaces.  We first prove \eqref{eq:null-low}.  Set
\[
 A_j:=2^{j(s-1)}\norm{\dot\Delta_ja}{L^p},
 \qquad
 Y_j:=2^{j(s+1)}\norm{\dot\Delta_jY}{L^p}.
\]
Then $\sum_jA_j\simeq\norm{a}{\B^{s-1}_{p,1}}$ and
$\sum_jY_j\simeq\norm{Y}{\B^{s+1}_{p,1}}$.  We use Bony's
decomposition
\[
 a\cdot\nabla Y=\T_a\nabla Y+\T_{\nabla Y}a+\Rem(a,\nabla Y),
\]
where
\[
 \T_a\nabla Y=\sum_k \dot S_{k-1}a\cdot\nabla\dot\Delta_kY,\qquad
 \Rem(a,\nabla Y)=\sum_k\dot\Delta_ka\cdot
        \nabla\widetilde{\dot\Delta}_kY .
\]

Consider first $\T_a\nabla Y$.  By the frequency localization of the
paraproduct, for a fixed integer $N$,
\[
 \dot\Delta_j(\T_a\nabla Y)
 =\sum_{|k-j|\le N}
   \dot\Delta_j(\dot S_{k-1}a\cdot\nabla\dot\Delta_kY).
\]
Bernstein's inequality gives
\[
 \norm{\dot S_{k-1}a}{L^\infty}
 \le \sum_{\ell\le k-2}\norm{\dot\Delta_\ell a}{L^\infty}
 \lesssim
 \sum_{\ell\le k-2}2^{\ell s}\norm{\dot\Delta_\ell a}{L^p}
 =
 \sum_{\ell\le k-2}2^\ell A_\ell,
\]
while
\[
 \norm{\nabla\dot\Delta_kY}{L^p}
 \lesssim 2^k\norm{\dot\Delta_kY}{L^p}
 =2^{-ks}Y_k .
\]
Therefore
\[
\begin{aligned}
 2^{j(s-1)}\norm{\dot\Delta_j(\T_a\nabla Y)}{L^p}
 &\lesssim
 \sum_{|k-j|\le N}
 2^{j(s-1)}2^{-ks}Y_k
 \sum_{\ell\le k-2}2^\ell A_\ell  \\
 &\lesssim
 \sum_{|k-j|\le N}
 Y_k\sum_{\ell\le k-2}2^{\ell-k}A_\ell .
\end{aligned}
\]
After summing in $j$ and using the finite overlap in $|k-j|\le N$,
\[
\begin{aligned}
 \norm{\T_a\nabla Y}{\B^{s-1}_{p,1}}
 &\lesssim
 \sum_kY_k\sum_{\ell\le k-2}2^{\ell-k}A_\ell  \\
 &\lesssim
 \Bigl(\sum_kY_k\Bigr)\Bigl(\sum_\ell A_\ell\Bigr).
\end{aligned}
\]

For the second paraproduct,
\[
 \dot\Delta_j(\T_{\nabla Y}a)
 =\sum_{|k-j|\le N}
   \dot\Delta_j(\dot S_{k-1}\nabla Y\cdot\dot\Delta_ka).
\]
Again by Bernstein,
\[
\begin{aligned}
 \norm{\dot S_{k-1}\nabla Y}{L^\infty}
 &\le
 \sum_{\ell\le k-2}\norm{\nabla\dot\Delta_\ell Y}{L^\infty}  \\
 &\lesssim
 \sum_{\ell\le k-2}
 2^{\ell s}2^\ell\norm{\dot\Delta_\ell Y}{L^p}
 =
 \sum_{\ell\le k-2}Y_\ell .
\end{aligned}
\]
Since $\norm{\dot\Delta_ka}{L^p}=2^{-k(s-1)}A_k$, we get
\[
 2^{j(s-1)}\norm{\dot\Delta_j(\T_{\nabla Y}a)}{L^p}
 \lesssim
 \sum_{|k-j|\le N}A_k\sum_{\ell\le k-2}Y_\ell .
\]
Thus
\[
 \norm{\T_{\nabla Y}a}{\B^{s-1}_{p,1}}
 \lesssim
 \sum_k A_k\sum_{\ell\le k-2}Y_\ell
 \lesssim
 \Bigl(\sum_kA_k\Bigr)\Bigl(\sum_\ell Y_\ell\Bigr).
\]

It remains to estimate the remainder.  The divergence-free condition is
used exactly here.  Since $\diver\dot\Delta_ka=0$, componentwise
\[
 \dot\Delta_ka\cdot\nabla\widetilde{\dot\Delta}_kY
 =
 \diver\bigl(\widetilde{\dot\Delta}_kY\otimes\dot\Delta_ka\bigr).
\]
The frequency support of the remainder implies that only indices
$k\ge j-N_0$ contribute to $\dot\Delta_j\Rem(a,\nabla Y)$, for a fixed
integer $N_0$.  Since $2\le p<\infty$, Bernstein's inequality from $L^{p/2}$ to
$L^p$ gives
\[
\begin{aligned}
 &2^{j(s-1)}\norm{\dot\Delta_j\Rem(a,\nabla Y)}{L^p}  \\
 &\quad\lesssim
 2^{j(s-1)}2^j2^{jd/p}
 \sum_{k\ge j-N_0}
 \norm{\widetilde{\dot\Delta}_kY\otimes\dot\Delta_ka}{L^{p/2}} .
\end{aligned}
\]
Here $d/p=s$.  By H\"older's inequality and the definitions of $A_k$ and
$Y_k$,
\[
 \norm{\widetilde{\dot\Delta}_kY\otimes\dot\Delta_ka}{L^{p/2}}
 \lesssim
 2^{-k(s+1)}Y_k\,2^{-k(s-1)}A_k,
\]
where, as usual, $Y_k$ may be replaced by a finite sum of neighboring
dyadic coefficients without changing its $\ell^1$ size.  Consequently,
\[
 2^{j(s-1)}\norm{\dot\Delta_j\Rem(a,\nabla Y)}{L^p}
 \lesssim
 \sum_{k\ge j-N_0}2^{-2s(k-j)}A_kY_k .
\]
Since $s>0$,
\[
\begin{aligned}
 \norm{\Rem(a,\nabla Y)}{\B^{s-1}_{p,1}}
 &\lesssim
 \sum_j\sum_{k\ge j-N_0}2^{-2s(k-j)}A_kY_k  \\
 &\lesssim
 \sum_k A_kY_k
 \lesssim
 \Bigl(\sum_kA_k\Bigr)\Bigl(\sum_kY_k\Bigr).
\end{aligned}
\]
This is the only place where the solenoidal structure is used: it places
one derivative on the low output of the high--high interaction and produces
the summable kernel $2^{-2s(k-j)}$.
Combining the three estimates proves \eqref{eq:null-low}.

For \eqref{eq:null-mid}, since $\diver a=0$ and the divergence is row-wise,
\[
 a\cdot\nabla w=\diver(w\otimes a),
\]
and $\B^s_{p,1}$ is an algebra.  Hence
\[
 \norm{a\cdot\nabla w}{\B^{s-1}_{p,1}}
 \lesssim
 \norm{w\otimes a}{\B^s_{p,1}}
 \lesssim
 \norm{a}{\B^s_{p,1}}\norm{w}{\B^s_{p,1}},
\]
which proves \eqref{eq:null-mid}.  The time-dependent versions follow
from the identical dyadic estimates and H\"older's inequality in time.
\end{proof}

\begin{remark}\label{rem:not-generic}
The estimate is not a disguised generic multiplier theorem.  For $s<1/2$, the product
$\B^s_{p,1}\cdot\B^{s-1}_{p,1}$ is not generally contained in
$\B^{s-1}_{p,1}$.  The derivative and the condition $\diver a=0$ are both essential.  Without the solenoidal rewriting of the high--high term as a divergence, the high-to-low cascade would contain the factor $2^{(1-2s)(k-j)}$, which is not summable when $s<1/2$.  The endpoint $p=\infty$ corresponds formally to $s=0$ under $s=d/p$; then the compensated high--high kernel above is no longer summable, which is why the argument does not include that endpoint.
\end{remark}

\begin{proposition}[Generic product failure without the solenoidal structure]\label{prop:generic-failure}
Let $1<p<\infty$ and $0<s<1/2$.
\begin{enumerate}[label=\textup{(\roman*)}]
 \item There are real-valued Schwartz functions $f_N,g_N$ such that
 \[
  \sup_N\|f_N\|_{\dot B^s_{p,1}}+
  \sup_N\|g_N\|_{\dot B^{s-1}_{p,1}}<\infty,
 \]
 but $\|f_Ng_N\|_{\dot B^{s-1}_{p,1}}\to\infty$.
 \item There are real-valued Schwartz vector fields $a_N,Y_N$ such that
 \[
  \sup_N\|a_N\|_{\dot B^{s-1}_{p,1}}+
  \sup_N\|Y_N\|_{\dot B^{s+1}_{p,1}}<\infty,
 \]
 while
 \[
  \|(a_N\cdot\nabla)Y_N\|_{\dot B^{s-1}_{p,1}}\to\infty.
 \]
 In this construction $a_N$ is not divergence free.  Thus the derivative alone does not recover the estimate: the solenoidal condition in Lemma~\ref{lem:null} is essential.
\end{enumerate}
\end{proposition}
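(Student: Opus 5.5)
The counterexample is a single high--high interaction: two wave packets at a common large frequency $\lambda$, whose product has a low-frequency piece of size $\lambda^{1-2s}$. Since $s<1/2$, this piece blows up while the factors stay bounded.

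\emph{Building block.} Fix a real-valued Schwartz function $\psi\not\equiv0$ with $\widehat\psi$ even and supported in the unit ball. For $\lambda=2^N$ with $N$ large, set
\[
 \phi_\lambda^{c}(x)=\psi(x)\cos(\lambda x_1),\qquad \phi_\lambda^{s}(x)=\psi(x)\sin(\lambda x_1).
\]
These are real and Schwartz. Their Fourier transforms lie in the union of the balls $B(\pm\lambda e_1,1)$. Hence only a bounded number of Littlewood--Paley blocks $\dot\Delta_j$, with $2^j\sim\lambda$, act on them, and Bernstein's inequality gives
\[
 \|\phi_\lambda^{c,s}\|_{\dot B^{r}_{p,1}}\simeq\lambda^{r}\|\psi\|_{L^p}
\]
uniformly in $\lambda$, for every real $r$.

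\emph{Part (i).} Take
\[
 f_N=\lambda^{-s}\phi^c_\lambda,\qquad g_N=\lambda^{1-s}\phi^c_\lambda .
\]
By the building block, $f_N$ is bounded in $\dot B^s_{p,1}$ and $g_N$ is bounded in $\dot B^{s-1}_{p,1}$. Since $\cos^2=\tfrac12(1+\cos(2\lambda x_1))$,
\[
 f_Ng_N=\tfrac12\lambda^{1-2s}\psi^2+\tfrac12\lambda^{1-2s}\psi^2\cos(2\lambda x_1).
\]
The first term has Fourier support in $B(0,2)$. The second has Fourier support in $B(\pm2\lambda e_1,2)$.

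Fix an index $j_0\le0$ such that $\dot\Delta_{j_0}(\psi^2)\neq0$. Such an index exists because $\psi^2$ is a nonzero element of $\mathcal S'_h$, namely a Schwartz function. For $\lambda$ large, $\dot\Delta_{j_0}$ annihilates the second term. Therefore
\[
 \|f_Ng_N\|_{\dot B^{s-1}_{p,1}}\ge 2^{j_0(s-1)}\|\dot\Delta_{j_0}(f_Ng_N)\|_{L^p}=c\,\lambda^{1-2s}\to\infty,
\]
since $1-2s>0$.

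\emph{Part (ii).} Take
\[
 a_N=\lambda^{1-s}\phi^c_\lambda\,e_1,\qquad Y_N=\lambda^{-s-1}\phi^s_\lambda\,e_1 .
\]
The building block gives the uniform bounds in $\dot B^{s-1}_{p,1}$ and $\dot B^{s+1}_{p,1}$ respectively. Then
\[
 (a_N\cdot\nabla)Y_N=a_{N,1}\partial_1Y_{N,1}
 =\lambda^{1-2s}\psi^2\cos^2(\lambda x_1)+\lambda^{-2s}\psi\,\partial_1\psi\,\sin(\lambda x_1)\cos(\lambda x_1).
\]
The second term equals $\tfrac12\lambda^{-2s}\psi\partial_1\psi\sin(2\lambda x_1)$. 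It is frequency-localized near $\pm2\lambda e_1$, so $\dot\Delta_{j_0}$ kills it. The first term has exactly the low-frequency piece from part (i). The same lower bound therefore gives $\|(a_N\cdot\nabla)Y_N\|_{\dot B^{s-1}_{p,1}}\gtrsim\lambda^{1-2s}\to\infty$.

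Finally, $a_N$ is not divergence free, because $\operatorname{div}a_N=\lambda^{1-s}\partial_1\phi^c_\lambda\neq0$. This is consistent with Remark~\ref{rem:not-generic}: the failing term is precisely the high--high kernel $2^{(1-2s)(k-j)}$ that the solenoidal rewriting removes.

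\emph{Main obstacle.} The only delicate point is the lower bound in the homogeneous space: the low-frequency output must be detected by a single fixed block, uncontaminated by the $2\lambda$-oscillating parts. Compact Fourier support of $\psi$ handles this, and $\psi^2\neq0$ guarantees a nonvanishing block $\dot\Delta_{j_0}$.
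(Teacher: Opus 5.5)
Your proposal is correct and is essentially the paper's argument: two real wave packets at frequency $2^N$ whose high--high interaction produces a low-frequency piece of size $2^{N(1-2s)}$. That piece is detected by a fixed dyadic block which annihilates the $2^{N+1}$-oscillating part. The only differences are technical, and they cut both ways. First, the paper shifts the second packet by a fixed $\eta$ so that the output lies in a fixed annulus, which keeps $\|f_Ng_N\|_{\dot B^{s-1}_{p,1}}$ finite for every $(s,p)$. Your zero-frequency output $\psi^2$ has $\int\psi^2>0$, so it lies in $\dot B^{s-1}_{p,1}$ only when $s-1+d(1-1/p)>0$ (true for $s=d/p$, $d\ge2$); otherwise your norms are identically infinite and the conclusion holds only trivially. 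Second, your $\cos/\sin$ pairing in (ii) makes the term where the derivative falls on $\psi$ purely high-frequency, which spares you the paper's $2^{-N}$ smallness argument for that term.
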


\begin{proof}
Choose a real-valued $\chi\in\mathcal S$, $\chi\not\equiv0$, whose Fourier
transform is supported in a sufficiently small ball around the origin.  Fix
nonzero vectors $\xi,\eta$ with $\xi_1\ne0$ and choose $\eta$ so that the
Fourier supports of $\cos(\eta\cdot x)\chi^2$ and
$\sin(\eta\cdot x)\chi^2$ are contained in a fixed annulus away from the
origin.  Put
\[
 f_N(x)=2^{-Ns}\cos(2^N\xi\cdot x)\chi(x),
 \qquad
 g_N(x)=2^{N(1-s)}\cos((2^N\xi-\eta)\cdot x)\chi(x).
\]
Frequency localization gives
$\|f_N\|_{\dot B^s_{p,1}}\simeq1$ and
$\|g_N\|_{\dot B^{s-1}_{p,1}}\simeq1$.  On the other hand,
\[
 f_Ng_N=\frac12 2^{N(1-2s)}
 \bigl(\cos(\eta\cdot x)+\cos((2^{N+1}\xi-\eta)\cdot x)\bigr)\chi^2.
\]
For $N$ large, the fixed annular part and the high-frequency part have
disjoint Fourier supports.  Applying the fixed finite sum of dyadic blocks
which selects the annulus containing $\cos(\eta\cdot x)\chi^2$ gives
\[
 \|f_Ng_N\|_{\dot B^{s-1}_{p,1}}
 \gtrsim 2^{N(1-2s)}.
\]
This proves the first assertion because $s<1/2$.

For the second assertion, take
\[
 a_N=g_N e_1,\qquad
 Y_N=2^{-N(s+1)}\cos(2^N\xi\cdot x)\chi(x)e_1.
\]
Then $\|a_N\|_{\dot B^{s-1}_{p,1}}+\|Y_N\|_{\dot B^{s+1}_{p,1}}\lesssim1$.
Since
\[
 \partial_1\bigl(2^{-N(s+1)}\cos(2^N\xi\cdot x)\chi\bigr)
 =-\xi_1 2^{-Ns}\sin(2^N\xi\cdot x)\chi
   +2^{-N(s+1)}\cos(2^N\xi\cdot x)\partial_1\chi,
\]
the leading low-frequency part of $(a_N\cdot\nabla)Y_N$ is
\[
 -\frac{\xi_1}{2}2^{N(1-2s)}\sin(\eta\cdot x)\chi^2 e_1.
\]
The remaining leading oscillation is supported near frequency $2^{N+1}\xi$,
and the terms in which the derivative falls on $\chi$ are smaller by a
factor $2^{-N}$ at the fixed low frequency.  The same fixed dyadic
projection therefore gives
\[
 \|(a_N\cdot\nabla)Y_N\|_{\dot B^{s-1}_{p,1}}
 \gtrsim 2^{N(1-2s)},
\]
which tends to infinity.  The vector field $a_N$ is not divergence free,
and the claim follows.
\end{proof}

\begin{remark}[The endpoint $p=2d$]\label{rem:threshold-endpoint}
Proposition~\ref{prop:generic-failure} concerns the strict range $s<1/2$; under the standing relation $s=d/p$ used in the applications, this is the high-$p$ range $p>2d$.  At $s=1/2$ the standard generic product theorem is at its limiting balance, and the single-frequency construction above does not produce divergence.  None of the positive arguments in this paper requires a generic endpoint failure: the compensated estimate and the Piola graph treat $p=2d$ and $p>2d$ uniformly.  Accordingly, whenever generic-product failure is asserted below, it refers to the strict high-$p$ range.
\end{remark}

\section{A rigorous compensated Piola pullback}\label{sec:piola}

For $Y\in\B^{s+1}_{p,1}$ and a solenoidal
$a\in\B^{s-1}_{p,1}$, define
\[
 \K_Ya:=(a\cdot\nabla)Y,
 \qquad L_Ya:=a+\K_Ya,
\]
where the product is the compensated transport product of
Lemma~\ref{lem:null}.  At middle regularity, for
$a\in\B^s_{p,1}$, the same notation is used and agrees with the usual
Bony product.

\begin{lemma}[Coercivity on divergence-free fields]\label{lem:coercive}
Let $C_0$ be the constant in \eqref{eq:null-low}.  There exists
$\varepsilon_0>0$, depending only on $d$ and $p$, with the following
property.  If $Y\in\B^{s+1}_{p,1}$ satisfies
\begin{equation}\label{eq:Y-small-static}
 \norm{Y}{\B^{s+1}_{p,1}}\le\varepsilon_0,
\end{equation}
then
\begin{equation}\label{eq:coercive}
 \norm{a}{\B^{s-1}_{p,1}}
 \le2\norm{L_Ya}{\B^{s-1}_{p,1}}
 \qquad\text{for every }a\in\B^{s-1}_{p,1},\quad\diver a=0.
\end{equation}
Moreover, for the same $Y$,
\begin{equation}\label{eq:coercive-mid}
 \norm{a}{\B^s_{p,1}}
 \le2\norm{L_Ya}{\B^s_{p,1}}
 \qquad\text{for every }a\in\B^s_{p,1},\quad\diver a=0.
\end{equation}
\end{lemma}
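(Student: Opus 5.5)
The plan is a direct perturbation argument. We write $a=L_Ya-\K_Ya$ and use the triangle inequality, so everything reduces to showing that $\K_Y$ is small as an operator on solenoidal fields at both regularities.

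At the low regularity $s-1$, Lemma~\ref{lem:null}, estimate \eqref{eq:null-low}, applies because $\diver a=0$. It gives
\[
 \norm{a}{\B^{s-1}_{p,1}}
 \le\norm{L_Ya}{\B^{s-1}_{p,1}}+C_0\norm{a}{\B^{s-1}_{p,1}}\norm{Y}{\B^{s+1}_{p,1}}
 \le\norm{L_Ya}{\B^{s-1}_{p,1}}+C_0\varepsilon_0\norm{a}{\B^{s-1}_{p,1}} .
\]
If $C_0\varepsilon_0\le1/2$, the last term is absorbed on the left, since $\norm{a}{\B^{s-1}_{p,1}}$ is finite by hypothesis. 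This yields \eqref{eq:coercive}.

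For the middle regularity \eqref{eq:coercive-mid}, we need a bound
\[
 \norm{(a\cdot\nabla)Y}{\B^s_{p,1}}\le C_1\norm{a}{\B^s_{p,1}}\norm{Y}{\B^{s+1}_{p,1}}.
\]
This follows from the algebra property in Lemma~\ref{lem:standard-besov}(i) together with (ii): $\nabla Y\in\B^s_{p,1}$ with norm comparable to $\norm{Y}{\B^{s+1}_{p,1}}$, and $a\in\B^s_{p,1}$, so the product lies in $\B^s_{p,1}$. No solenoidal structure is needed here. We then choose
\[
 \varepsilon_0:=\tfrac12\min\{C_0^{-1},C_1^{-1}\},
\]
which depends only on $d$ and $p$, and absorb the $\K_Y$ term exactly as before.

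The only point that needs any care is consistency of the product notation. The operator $\K_Y$ at low regularity is defined by continuous extension from frequency-truncated fields. The triangle-inequality step needs $\K_Ya$ to be the same object that appears in $L_Ya$; this holds by definition, because both are the compensated product. At the middle regularity the paper notes that the notation agrees with the usual Bony product, so the algebra estimate applies to the same object. Beyond this bookkeeping I expect no real obstacle: the substantive input, the compensated bound \eqref{eq:null-low}, is already available.
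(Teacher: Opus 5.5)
Your proposal is correct and follows the paper's proof essentially line for line. The paper also writes $a=L_Ya-\K_Ya$, applies the compensated bound \eqref{eq:null-low} at regularity $s-1$ and the algebra-plus-derivative bound with a constant $C_1$ at regularity $s$, then chooses $\varepsilon_0\le\min\{1/(2C_0),1/(2C_1)\}$ and absorbs the perturbation, with your observation that the middle estimate needs no solenoidal structure being accurate.
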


\begin{proof}
Choose a constant $C_1$, depending only on $d$ and $p$, such that
\[
 \norm{(a\cdot\nabla)Y}{\B^s_{p,1}}
 \le C_1\norm{a}{\B^s_{p,1}}\norm{Y}{\B^{s+1}_{p,1}},
 \qquad a\in\B^s_{p,1}.
\]
This estimate follows from the algebra property of $\B^s_{p,1}$ and
the equivalence
$\norm{\nabla Y}{\B^s_{p,1}}\simeq\norm{Y}{\B^{s+1}_{p,1}}$.
Fix
\[
 \varepsilon_0
 \le \min\Bigl\{\frac1{2C_0},\frac1{2C_1}\Bigr\}.
\]

Let first $a\in\B^{s-1}_{p,1}$ with $\diver a=0$.  Since
$L_Ya=a+(a\cdot\nabla)Y$,
\[
 a=L_Ya-(a\cdot\nabla)Y.
\]
Taking the $\B^{s-1}_{p,1}$ norm and using \eqref{eq:null-low}, we get
\[
\begin{aligned}
 \norm{a}{\B^{s-1}_{p,1}}
 &\le
 \norm{L_Ya}{\B^{s-1}_{p,1}}
 +\norm{(a\cdot\nabla)Y}{\B^{s-1}_{p,1}}  \\
 &\le
 \norm{L_Ya}{\B^{s-1}_{p,1}}
 +C_0\norm{Y}{\B^{s+1}_{p,1}}
       \norm{a}{\B^{s-1}_{p,1}} .
\end{aligned}
\]
Under \eqref{eq:Y-small-static}, the choice of $\varepsilon_0$ gives
$C_0\norm{Y}{\B^{s+1}_{p,1}}\le1/2$.  Hence
\[
 \frac12\norm{a}{\B^{s-1}_{p,1}}
 \le \norm{L_Ya}{\B^{s-1}_{p,1}},
\]
which proves \eqref{eq:coercive}.

The proof at regularity $s$ is identical, with the preceding algebra
estimate replacing \eqref{eq:null-low}.  Indeed,
\[
\begin{aligned}
 \norm{a}{\B^s_{p,1}}
 &\le
 \norm{L_Ya}{\B^s_{p,1}}
 +\norm{(a\cdot\nabla)Y}{\B^s_{p,1}}  \\
 &\le
 \norm{L_Ya}{\B^s_{p,1}}
 +C_1\norm{Y}{\B^{s+1}_{p,1}}\norm{a}{\B^s_{p,1}}.
\end{aligned}
\]
Since $C_1\norm{Y}{\B^{s+1}_{p,1}}\le1/2$, moving the last term to the
left gives \eqref{eq:coercive-mid}.
\end{proof}

The next proposition is the point that removes the circular definition $z=Av$ at negative regularity.

\begin{proposition}[Closed Piola graph]\label{prop:closed-piola}
Let $Y_n,Y\in C([0,T];\B^{s+1}_{p,1})$ and
$v_n,v\in C_T\B^{s-1}_{p,1}\cap L^2_T\B^s_{p,1}$ satisfy
\[
 Y_n\to Y\quad\text{in }C_T\B^{s+1}_{p,1},
 \qquad
 v_n\to v\quad\text{in }C_T\B^{s-1}_{p,1}
 \cap L^2_T\B^s_{p,1}.
\]
Assume, with $\varepsilon_0$ from Lemma~\ref{lem:coercive}, that
\[
 \sup_n \norm{Y_n}{L^\infty_T\B^{s+1}_{p,1}}\le\varepsilon_0
\]
and that there are smooth divergence-free fields
\[
 z_n\in C_T\B^{s-1}_{p,1}\cap L^2_T\B^s_{p,1}
\]
such that
\begin{equation}\label{eq:piola-approx-identity}
 v_n=z_n+(z_n\cdot\nabla)Y_n.
\end{equation}
Then there is a unique
\[
 z\in C_T\B^{s-1}_{p,1}
 \cap L^2_T\B^s_{p,1},
 \qquad\diver z=0,
\]
such that
\begin{equation}\label{eq:piola-limit-identity}
 v=z+(z\cdot\nabla)Y.
\end{equation}
Moreover,
\begin{align}
 \norm{z}{L^\infty_T\B^{s-1}_{p,1}}
 &\le2\norm{v}{L^\infty_T\B^{s-1}_{p,1}},\label{eq:piola-low-bound}\\
 \norm{z}{L^2_T\B^s_{p,1}}
 &\le2\norm{v}{L^2_T\B^s_{p,1}}.\label{eq:piola-mid-bound}
\end{align}
\end{proposition}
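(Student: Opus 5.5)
The strategy is to show that $(z_n)$ is Cauchy in $C_T\B^{s-1}_{p,1}\cap L^2_T\B^s_{p,1}$, using the coercivity of Lemma~\ref{lem:coercive} applied uniformly in $n$ and $t$. We then identify the limit through the continuity of the compensated product from Lemma~\ref{lem:null}. The bounds \eqref{eq:piola-low-bound}--\eqref{eq:piola-mid-bound} come from coercivity applied at each fixed time to the limit identity, and uniqueness also follows from coercivity.

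\emph{A priori bounds.} At each fixed $t$, apply \eqref{eq:coercive} and \eqref{eq:coercive-mid} to the divergence-free field $z_n(t)$, with $L_{Y_n(t)}z_n(t)=v_n(t)$. This gives $\norm{z_n(t)}{\B^{s-1}_{p,1}}\le2\norm{v_n(t)}{\B^{s-1}_{p,1}}$ and $\norm{z_n(t)}{\B^s_{p,1}}\le2\norm{v_n(t)}{\B^s_{p,1}}$. Taking the supremum in time, respectively the $L^2$ norm, shows that $z_n$ is bounded in $L^\infty_T\B^{s-1}_{p,1}\cap L^2_T\B^s_{p,1}$.

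\emph{Cauchy property.} Subtracting the identities \eqref{eq:piola-approx-identity} for $n$ and $m$ gives
\[
 L_{Y_n}(z_n-z_m)=v_n-v_m-\bigl(z_m\cdot\nabla\bigr)(Y_n-Y_m).
\]
Since $z_n-z_m$ is solenoidal, coercivity together with \eqref{eq:null-low} at each time yields
\[
 \norm{z_n-z_m}{L^\infty_T\B^{s-1}_{p,1}}\le2\norm{v_n-v_m}{L^\infty_T\B^{s-1}_{p,1}}+2C_0\norm{z_m}{L^\infty_T\B^{s-1}_{p,1}}\norm{Y_n-Y_m}{L^\infty_T\B^{s+1}_{p,1}}.
\]
The same argument with the algebra estimate (constant $C_1$) gives the $L^2_T\B^s_{p,1}$ analogue. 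By the uniform bounds of the previous step, both right-hand sides tend to zero. Hence $z_n\to z$ in $C_T\B^{s-1}_{p,1}\cap L^2_T\B^s_{p,1}$; time continuity passes to the limit because the convergence is uniform in time. The condition $\diver z=0$ passes to the limit in distributions. Passing to the limit in \eqref{eq:piola-approx-identity} by bilinear continuity of Lemma~\ref{lem:null} (the product $(z_n\cdot\nabla)Y_n\to(z\cdot\nabla)Y$ in $C_T\B^{s-1}_{p,1}$, again using the solenoidal condition) gives \eqref{eq:piola-limit-identity}.

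\emph{Bounds, uniqueness, and the main obstacle.} Applying coercivity at each time to \eqref{eq:piola-limit-identity} gives \eqref{eq:piola-low-bound} and \eqref{eq:piola-mid-bound}; note that $\norm{Y(t)}{\B^{s+1}_{p,1}}\le\varepsilon_0$ follows from the uniform smallness of $Y_n$ and the convergence. If $z,z'$ both satisfy \eqref{eq:piola-limit-identity}, then $L_Y(z-z')=0$ with $z-z'$ solenoidal, so $z=z'$ by \eqref{eq:coercive}.

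The delicate point is that the compensated product is defined only through continuous extension on solenoidal fields. We must check that it is jointly continuous in $(a,Y)$ for divergence-free $a$, so that the limiting product agrees with the one in Lemma~\ref{lem:null} and is not an arbitrary distributional limit. This holds because the Bony bounds are bilinear with constant $C_0$ and the solenoidal subspace is closed. If that identification needed more care, a fallback would be the Fatou property (Lemma~\ref{lem:bochner-besov-fatou}) for distributional limits, but the strong Cauchy argument above should make it unnecessary.
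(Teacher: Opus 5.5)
Your proof is correct, but it takes a different and more direct route than the paper.

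\textbf{The paper's route.} The paper never subtracts the identities for two indices $n,m$. It proceeds as follows.
\begin{enumerate}
 \item It takes the uniform coercivity bounds for $z_n$.
 \item By a diagonal argument, it extracts weak-star and weak limits of each dyadic block $\dot\Delta_j z_n$ in $L^\infty_tL^p\cap L^2_tL^p$. It then places the limit in $L^\infty_T\dot B^{s-1}_{p,1}\cap L^2_T\dot B^s_{p,1}$ by the Fatou property.
 \item It passes to the limit in $(z_n\cdot\nabla)Y_n$ only in distributions. To do so it splits off a frequency truncation $Y^{(N)}$ of $Y$, so that the remaining term has a smooth coefficient.
 \item It invokes uniqueness to get convergence of the whole sequence.
 \item Only at the end does it construct a continuous-in-time representative. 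It subtracts the limit identity at two times $t,t_0$ and applies coercivity and Lemma~\ref{lem:null}. This is your Cauchy estimate with $(n,m)$ replaced by $(t,t_0)$.
\end{enumerate}

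\textbf{Your route.} You use the strong convergence of $v_n$ in $C_T\dot B^{s-1}_{p,1}\cap L^2_T\dot B^s_{p,1}$ and of $Y_n$ in $C_T\dot B^{s+1}_{p,1}$, both already in the hypotheses. With them, the identity $L_{Y_n}(z_n-z_m)=v_n-v_m-(z_m\cdot\nabla)(Y_n-Y_m)$ makes $(z_n)$ Cauchy in the full norms.

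This buys several things:
\begin{itemize}
 \item It bypasses the non-reflexivity of the $\ell^1$-Besov spaces, the blockwise compactness, and the Fatou step.
 \item It makes the separate continuity step unnecessary, since continuity follows from uniform-in-time convergence.
 \item It identifies the limit product by norm continuity of the bilinear compensated product.
 \item It gives a strictly stronger conclusion: $z_n\to z$ in $C_T\dot B^{s-1}_{p,1}\cap L^2_T\dot B^s_{p,1}$, not merely in distributions.
\end{itemize}

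What the paper's compactness route buys in principle is robustness in the mode of convergence of $v_n$. It would still produce a solution of $L_Yz=v$ if $v_n$ converged only in distributions with uniform bounds; the bounds would then be in terms of $\liminf\|v_n\|$. Under the stated hypotheses nothing is lost by your argument.

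\textbf{A minor remark.} For \eqref{eq:piola-mid-bound} it is simplest to pass to the limit in $\|z_n\|_{L^2_T\dot B^s_{p,1}}\le 2\|v_n\|_{L^2_T\dot B^s_{p,1}}$, using the strong convergence you have established. Reapplying coercivity to the limit identity at almost every time also works, since the paper notes that the compensated product agrees with the Bony product at middle regularity.
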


\begin{proof}
Throughout this proof set
\[
 E:=\B^{s-1}_{p,1}(\R^d)^d,
 \qquad
 E_\sigma:=\{a\in E:\diver a=0\}.
\]
We first obtain estimates for the approximating fields.  The dyadic
proof of Lemma~\ref{lem:coercive}, with the time exponents inserted by
H\"older's inequality, gives the Bochner-in-time versions
\[
 \norm{a}{L^\infty_T\B^{s-1}_{p,1}}
 \le
 2\norm{L_{Y_n}a}{L^\infty_T\B^{s-1}_{p,1}},
 \qquad
 \norm{a}{L^2_T\B^s_{p,1}}
 \le
 2\norm{L_{Y_n}a}{L^2_T\B^s_{p,1}},
\]
for divergence-free $a$, uniformly in $n$.  Applying this to
$a=z_n$ and using $L_{Y_n}z_n=v_n$ yields
\begin{align*}
 \norm{z_n}{L^\infty_T\B^{s-1}_{p,1}}
 &\le2\norm{v_n}{L^\infty_T\B^{s-1}_{p,1}},\\
 \norm{z_n}{L^2_T\B^s_{p,1}}
 &\le2\norm{v_n}{L^2_T\B^s_{p,1}}.
\end{align*}
Since $v_n\to v$ in the corresponding spaces, the sequence $(z_n)$ is
bounded in
$L^\infty_T\B^{s-1}_{p,1}\cap L^2_T\B^s_{p,1}$.

We spell out the compactness step because the Besov summability index is $1$ and the ambient space is not reflexive.  For every dyadic index $j$, the sequence $\dot\Delta_jz_n$ is bounded in
\[
 L^\infty(0,T;L^p)\cap L^2(0,T;L^p).
\]
Since $1<p<\infty$, Banach--Alaoglu and reflexivity give, after a diagonal extraction, weak-star convergence in the first space and weak convergence in the second for every $j$.  The compatibility of the Littlewood--Paley projectors identifies the limiting blocks with a distribution $z$ on $(0,T)\times\R^d$: after testing against a Schwartz function and using the finite overlap of the dyadic supports, the blockwise limits reconstruct a unique element of $\mathcal S'_h$ whose dyadic blocks are precisely the limits just obtained.  For every finite set of indices $J\subset\mathbb Z$, weak lower semicontinuity of the convex functionals
\[
 (f_j)_{j\in J}\longmapsto
 \left\|\sum_{j\in J}2^{j(s-1)}\|f_j(t)\|_{L^p}\right\|_{L^\infty_t},
\]
\[
 (f_j)_{j\in J}\longmapsto
 \left\|\sum_{j\in J}2^{js}\|f_j(t)\|_{L^p}\right\|_{L^2_t}
\]
and then monotone convergence as $J\uparrow\mathbb Z$--equivalently Lemma~\ref{lem:bochner-besov-fatou}--yield the Bochner--Besov Fatou bounds
\[
 z\in L^\infty_T\B^{s-1}_{p,1}\cap L^2_T\B^s_{p,1},
\]
with the corresponding norms bounded by the lower limits of the norms of $z_n$.  The distributional limit is solenoidal because each $z_n$ is.  Using the strong convergence of $v_n$ in the preceding estimates gives \eqref{eq:piola-low-bound} and \eqref{eq:piola-mid-bound}.

We next pass to the nonlinear term.  Let
\[
 Y^{(N)}:=\sum_{|j|\le N}\dot\Delta_jY .
\]
Then $Y^{(N)}\to Y$ in $C_T\B^{s+1}_{p,1}$.  We decompose
\[
\begin{aligned}
 (z_n\cdot\nabla)Y_n-(z\cdot\nabla)Y
 ={}&z_n\cdot\nabla(Y_n-Y)
    +z_n\cdot\nabla(Y-Y^{(N)})\\
 &+(z_n-z)\cdot\nabla Y^{(N)}
    +z\cdot\nabla(Y^{(N)}-Y).
\end{aligned}
\]
The first term tends to zero in $L^\infty_T\B^{s-1}_{p,1}$ because
Lemma~\ref{lem:null} gives
\[
 \norm{z_n\cdot\nabla(Y_n-Y)}{L^\infty_T\B^{s-1}_{p,1}}
 \lesssim
 \norm{z_n}{L^\infty_T\B^{s-1}_{p,1}}
 \norm{Y_n-Y}{L^\infty_T\B^{s+1}_{p,1}}.
\]
The second and fourth terms are uniformly small as $N\to\infty$:
\[
\begin{aligned}
 \norm{z_n\cdot\nabla(Y-Y^{(N)})}{L^\infty_T\B^{s-1}_{p,1}}
 &\lesssim
 \norm{z_n}{L^\infty_T\B^{s-1}_{p,1}}
 \norm{Y-Y^{(N)}}{L^\infty_T\B^{s+1}_{p,1}},\\
 \norm{z\cdot\nabla(Y^{(N)}-Y)}{L^\infty_T\B^{s-1}_{p,1}}
 &\lesssim
 \norm{z}{L^\infty_T\B^{s-1}_{p,1}}
 \norm{Y^{(N)}-Y}{L^\infty_T\B^{s+1}_{p,1}} .
\end{aligned}
\]
For fixed $N$, the coefficient $\nabla Y^{(N)}$ is smooth in $x$ with
all relevant norms bounded uniformly in time; multiplication by this
coefficient is therefore continuous for distributional convergence.
Thus
\[
 (z_n-z)\cdot\nabla Y^{(N)}\longrightarrow0
 \quad\text{in distributions}
\]
for each fixed $N$.  Letting first $n\to\infty$ and then
$N\to\infty$ yields
\[
 (z_n\cdot\nabla)Y_n\longrightarrow(z\cdot\nabla)Y
 \quad\text{in distributions}.
\]
Since also $v_n\to v$, passing to the limit in
\eqref{eq:piola-approx-identity} gives \eqref{eq:piola-limit-identity}.

The solution of the limiting identity is unique.  Indeed, if $z$ and
$\widetilde z$ both satisfy \eqref{eq:piola-limit-identity}, then
$\delta z:=z-\widetilde z$ is divergence free and
$L_Y\delta z=0$.  Lemma~\ref{lem:coercive} gives
\[
 \norm{\delta z}{\B^{s-1}_{p,1}}
 \le2\norm{L_Y\delta z}{\B^{s-1}_{p,1}}=0.
\]
Hence $z=\widetilde z$.  Consequently every distributional subsequential
limit of $(z_n)$ is the same, so the whole approximation has this limit.

It remains to identify the continuous representative.  The preceding
limit gives \eqref{eq:piola-limit-identity} as a spacetime distribution
identity.  Since $v\in C_T\dot B^{s-1}_{p,1}$ and
$Y\in C_T\dot B^{s+1}_{p,1}$, the map
$t\mapsto \K_{Y(t)}a$ is continuous from $[0,T]$ to $E$ for each fixed
$a\in E_\sigma$.  Hence the spacetime identity implies that
$L_{Y(t)}z(t)=v(t)$ for almost every $t$, after replacing $z$ by its
Lebesgue representative in $L^\infty_T E$.

Let $\mathcal T$ be the full-measure set of such times.  For
$t,t_0\in\mathcal T$, subtract the two identities and write
\[
 L_{Y(t)}\bigl(z(t)-z(t_0)\bigr)
 =
 v(t)-v(t_0)-\K_{Y(t)-Y(t_0)}z(t_0).
\]
Coercivity and Lemma~\ref{lem:null} give
\begin{align*}
 \norm{z(t)-z(t_0)}{\B^{s-1}_{p,1}}
 \lesssim{}&\norm{v(t)-v(t_0)}{\B^{s-1}_{p,1}}\\
 &+\norm{z(t_0)}{\B^{s-1}_{p,1}}
 \norm{Y(t)-Y(t_0)}{\B^{s+1}_{p,1}}.
\end{align*}
Thus $z|_{\mathcal T}$ is continuous at every point of $\mathcal T$.
For any $t_*\in[0,T]$, choose $t_m\in\mathcal T$ with $t_m\to t_*$;
the estimate above shows that $z(t_m)$ is Cauchy in $E$ and that its
limit is independent of the approximating sequence.  Denote the limit by
$\bar z(t_*)$.  Then $\bar z\in C_T E_\sigma$, $\bar z=z$ almost
everywhere, and the $L^2_T\dot B^s_{p,1}$ bound is unchanged.  Passing
to the limit in $L_{Y(t_m)}z(t_m)=v(t_m)$ gives
$L_{Y(t_*)}\bar z(t_*)=v(t_*)$ for every $t_*$.  Replacing $z$ by
$\bar z$ gives the asserted continuous solution.
\end{proof}

\begin{corollary}[Piola pullback of a critical flow]\label{cor:flow-piola}
Let $u\in C_T\B^{s-1}_{p,1}\cap L^1_T\B^{s+1}_{p,1}$ be divergence free, let $X$ be its volume-preserving bi-Lipschitz flow, and put
$Y=X-\Id$.  Then $Y\in C_T\B^{s+1}_{p,1}$; set
$F:=I+\D Y$, the distributional derivative of $X$.  If, with $\varepsilon_0$ from
Lemma~\ref{lem:coercive},
\[
 \norm{Y}{L^\infty_T\B^{s+1}_{p,1}}\le\frac{\varepsilon_0}{2},
\]
then there is a unique field $z$ in the class of Proposition~\ref{prop:closed-piola} such that
\begin{equation}\label{eq:vz-rigorous}
 \diver z=0,
 \qquad v:=u\circ X=Fz=z+(z\cdot\nabla)Y.
\end{equation}
For smooth $u$, this field is $z=F^{-1}v$.  For a critical solution, \eqref{eq:vz-rigorous} is the definition of its Piola pullback.
\end{corollary}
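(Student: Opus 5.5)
The plan is to derive the corollary from Proposition~\ref{prop:closed-piola} by smooth approximation, in three steps.

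\emph{Step 1: regularity of $Y$.} Since $Y(t)=\int_0^t u(\tau,X(\tau))\,d\tau$, we have $\partial_tY=u\circ X$. The volume-preserving composition bound of Lemma~\ref{lem:composition} applies only for $-1<r<1$, while we need the index $s+1>1$. We therefore differentiate first:
\[
 \D Y(t)=\int_0^t(\D u)(\tau,X(\tau))\,\bigl(I+\D Y(\tau)\bigr)\,d\tau .
\]
Lemma~\ref{lem:composition} with $r=s<1$ controls $(\D u)\circ X$ in $L^1_T\B^s_{p,1}$. The algebra property (Lemma~\ref{lem:standard-besov}(i)) in the affine convention handles the product with $I+\D Y$. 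A Gronwall argument then gives $\D Y\in C_T\B^s_{p,1}$, hence $Y\in C_T\B^{s+1}_{p,1}$ by Lemma~\ref{lem:standard-besov}(ii). Continuity in time holds because the integrand lies in $L^1$ in time.

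\emph{Step 2: approximation.} Take smooth divergence-free $u_n=\sum_{|j|\le n}\dot\Delta_ju$, truncated in time if necessary, so that $u_n\to u$ in $C_T\B^{s-1}_{p,1}\cap L^1_T\B^{s+1}_{p,1}$. Let $X_n$ be the corresponding smooth volume-preserving flows, with $Y_n=X_n-\Id$, $F_n=I+\D Y_n$, and $v_n=u_n\circ X_n$. For smooth data, $z_n:=F_n^{-1}v_n$ is smooth and satisfies $v_n=F_nz_n=z_n+(z_n\cdot\nabla)Y_n$. It is divergence free by the classical Piola identity: $\diver(\operatorname{cof}F_n^\top)=0$, $\det F_n=1$, and $\diver u_n=0$ together give $\diver(F_n^{-1}\,u_n\circ X_n)=(\diver u_n)\circ X_n=0$.

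\emph{Step 3: convergence.} Repeating the Gronwall difference argument of Step~1 gives $Y_n\to Y$ in $C_T\B^{s+1}_{p,1}$. This also yields uniform Lipschitz bounds on $X_n^{\pm1}$ and uniform convergence $X_n\to X$, since $\B^{s+1}_{p,1}\hookrightarrow\dot W^{1,\infty}$. For $n$ large this gives $\sup_n\norm{Y_n}{L^\infty_T\B^{s+1}_{p,1}}\le\varepsilon_0$, using the slack $\varepsilon_0/2$ in the hypothesis. We then need $v_n\to v$ in $C_T\B^{s-1}_{p,1}\cap L^2_T\B^s_{p,1}$. Split
\[
 v_n-v=(u_n-u)\circ X_n+\bigl(u\circ X_n-u\circ X\bigr).
\]
The first term is handled by the uniform bound in Lemma~\ref{lem:composition}, with $r=s-1\in(-1,-\tfrac12]$ and with $r=s$. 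For $L^2_T\B^s_{p,1}$ we use $u\in L^2_T\B^s_{p,1}$ (Lemma~\ref{lem:standard-besov}(iii)). The second term is handled by parts (i)–(ii) of Lemma~\ref{lem:composition}: uniform-in-time convergence for $u\in C_T\B^{s-1}_{p,1}$, and $L^2$-Bochner convergence. Proposition~\ref{prop:closed-piola} then yields a unique $z$ with $\diver z=0$ and $v=z+(z\cdot\nabla)Y$.

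\emph{Uniqueness and the smooth case.} Uniqueness within the class follows from Lemma~\ref{lem:coercive} applied at each time, as in the proposition. For smooth $u$, the field $F^{-1}v$ is divergence free, lies in the class, and satisfies the identity, so uniqueness forces $z=F^{-1}v$.

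I expect Step~1 and the convergence $Y_n\to Y$ to be the main obstacle. The composition lemma stops at $|r|<1$, so one must work at the level of $\D Y$ in the algebra $\B^s_{p,1}$. The difference estimate also needs care: $(\D u)\circ X_n-(\D u)\circ X$ converges only by the strong, non-quantitative convergence in Lemma~\ref{lem:composition}, so it must be combined with dominated convergence in time rather than with a Lipschitz bound.
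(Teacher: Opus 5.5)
Your overall route is the paper's: smooth solenoidal approximations $u_n$, the Volterra equation for $DY_n$ closed with the composition lemma at index $s$ and the algebra property, the splitting of $v_n-v$, the classical Piola identity for $z_n=F_n^{-1}v_n$, and then Proposition~\ref{prop:closed-piola}. However, Step~3 is circular as written. The Gronwall difference argument for $DY_n-DY$ requires $(\nabla u_n)\circ X_n\to(\nabla u)\circ X$ in $L^1_T\dot B^s_{p,1}$. Both pieces of that convergence rest on Lemma~\ref{lem:composition}. The bound on $(\nabla u_n-\nabla u)\circ X_n$ needs bi-Lipschitz bounds on $X_n^{\pm1}$ uniform in $n$. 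The strong convergence of $(\nabla u)\circ X_n$ needs $X_n\to X$ uniformly on $[0,T]\times\mathbb R^d$. You instead derive these two facts \emph{from} $Y_n\to Y$ in $C_T\dot B^{s+1}_{p,1}$, i.e.\ from the conclusion. In any case, convergence in $\dot B^{s+1}_{p,1}$ controls only $DY_n-DY$, not $\|Y_n-Y\|_{L^\infty}$. These facts must come first, directly from the flow ODE, as in the paper. The bound $\sup_n\int_0^T\|\nabla u_n\|_{L^\infty}\,dt\lesssim\sup_n\|u_n\|_{L^1_T\dot B^{s+1}_{p,1}}<\infty$ gives uniform Lipschitz bounds for $X_n$ and the backward flows $X_n^{-1}$. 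For uniform convergence, use
\[
 |X_n-X|(t,x)\le\|u_n-u\|_{L^1_TL^\infty}+\int_0^t\|\nabla u(\tau)\|_{L^\infty}\,|X_n-X|(\tau,x)\,d\tau
\]
together with Gronwall, since $u_n\to u$ in $L^2_T\dot B^s_{p,1}\hookrightarrow L^2_TL^\infty$ by interpolation.

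Step~1 has a related weakness. Applying the algebra property to $((\nabla u)\circ X)\,DY$ and then Gronwall in $\dot B^s_{p,1}$ presupposes $DY(t)\in\dot B^s_{p,1}$, which is what you want to prove; as written it is only an a priori estimate. One repair is to solve $G=\int_0^tH(I+G)\,d\tau$, with $H=(\nabla u)\circ X$, by iteration in the Banach algebra. You would then identify $G$ with $DY$ through uniqueness of the same linear equation in $L^\infty$. The paper's way is simpler and makes Step~1 unnecessary. Once the flow facts above are in place, the difference argument shows that $(DY_n)$ is Cauchy in $C_T\dot B^s_{p,1}$. The uniform convergence $X_n\to X$ then identifies the limit with $DY$ in distributions, giving $Y\in C_T\dot B^{s+1}_{p,1}$ and $Y_n\to Y$ at once. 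The remaining parts of your proposal agree with the paper: convergence of $v_n$, smallness of $Y_n$ after discarding finitely many indices, uniqueness by coercivity, and $z=F^{-1}v$ for smooth $u$.
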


\begin{proof}
Choose smooth, rapidly decaying, divergence-free approximations $u_n$ such that
\[
 u_n\longrightarrow u
 \quad\hbox{in }C_T\B^{s-1}_{p,1}
 \cap L^1_T\B^{s+1}_{p,1}.
\]
By the interpolation estimate of Lemma~\ref{lem:standard-besov},
$u_n\to u$ in $L^2_T\B^s_{p,1}$.  In particular,
$u_n-u\to0$ in $L^1_T\B^s_{p,1}\hookrightarrow L^1_TL^\infty$.
Let $X_n$ be the smooth flow generated by $u_n$, and set
$Y_n:=X_n-\Id$.  Since
$\nabla u_n,\nabla u\in L^1_T\B^s_{p,1}\hookrightarrow L^1_TL^\infty$, the flows $X_n$, $X$ and their inverses have uniform bi-Lipschitz bounds.  The flow equation and Gronwall's inequality give
\[
 X_n\longrightarrow X
 \quad\hbox{uniformly on }[0,T]\times\R^d.
\]
All these flows are volume preserving because the velocities are divergence free.

We next construct the derivative of the critical flow by approximation, rather than differentiating $X$ classically.  For the smooth flows set
\[
 G_n:=\D Y_n=\D X_n-I,
 \qquad
 H_n:=(\nabla u_n)\circ X_n .
\]
Then
\begin{equation}\label{eq:smooth-flow-gradient-integral}
 G_n(t)=\int_0^t H_n(\tau)\bigl(I+G_n(\tau)\bigr)\,d\tau .
\end{equation}
The composition lemma, at index $s$, gives
\[
 H_n\longrightarrow H:=(\nabla u)\circ X
 \quad\hbox{in }L^1_T\B^s_{p,1};
\]
indeed, split the difference into
$((\nabla u_n-\nabla u)\circ X_n)$ and
$((\nabla u)\circ X_n-(\nabla u)\circ X)$, using the uniform
composition bound for the first term and the strong composition
convergence for the second.  Since $\B^s_{p,1}$ is an algebra, \eqref{eq:smooth-flow-gradient-integral} and Gronwall's inequality show that $(G_n)$ is Cauchy in $C_T\B^s_{p,1}$.  Let
$G$ be its limit.  Since $X_n\to X$ uniformly, $Y_n\to Y$ in distributions, and therefore $G=\D Y$ distributionally.  Thus
\[
 Y\in C_T\B^{s+1}_{p,1},
 \qquad
 Y_n\longrightarrow Y
 \quad\hbox{in }C_T\B^{s+1}_{p,1},
\]
and $F:=I+\D Y$ is the distributional derivative of the flow.  Passing to the limit in the integral equation gives
\[
 \D Y(t)=\int_0^t ((\nabla u)\circ X)(\tau)\bigl(I+\D Y(\tau)\bigr)\,d\tau
 \quad\hbox{in }C_T\B^s_{p,1}.
\]
The strict margin in the smallness assumption then implies, after
discarding finitely many indices, that
\[
 \sup_n\norm{Y_n}{L^\infty_T\B^{s+1}_{p,1}}\le\varepsilon_0.
\]

Now put
\[
 v_n:=u_n\circ X_n,
 \qquad
 v:=u\circ X .
\]
We claim that $v_n\to v$ in the two spaces appearing in
Proposition~\ref{prop:closed-piola}.  Write
\[
 v_n-v=(u_n-u)\circ X_n+\bigl(u\circ X_n-u\circ X\bigr).
\]
The first term tends to zero by the uniform composition bounds at
indices $s-1$ and $s$.  The second term tends to zero by the strong composition part of Lemma~\ref{lem:composition}, first in
$C_T\B^{s-1}_{p,1}$ and then by its Bochner $L^2_t$ assertion in $L^2_T\B^s_{p,1}$.  Hence
\[
 v_n\longrightarrow v
 \quad\hbox{in }C_T\B^{s-1}_{p,1}
 \cap L^2_T\B^s_{p,1}.
\]

For each smooth flow, define
\[
 F_n:=\D X_n,
 \qquad A_n:=F_n^{-1},
 \qquad z_n:=A_nv_n .
\]
For the smooth approximations all products are classical, and the fields $z_n$ are smooth, divergence free, and belong to the Besov classes required in Proposition~\ref{prop:closed-piola}.  Since $u_n$ is divergence free, $X_n$ is volume preserving.  The classical Piola identity therefore gives
\[
 \diver z_n=0.
\]
Moreover,
\[
 v_n=F_nz_n=(I+\D Y_n)z_n
     =z_n+(z_n\cdot\nabla)Y_n.
\]
Equivalently, Lemma~\ref{lem:coercive} applied to this smooth identity yields the same uniform Besov bounds for $z_n$ as those required by Proposition~\ref{prop:closed-piola}.
Thus the hypotheses of Proposition~\ref{prop:closed-piola} are
satisfied.  We obtain a unique
\[
 z\in C_T\B^{s-1}_{p,1}\cap L^2_T\B^s_{p,1},
 \qquad \diver z=0,
\]
such that
\[
 v=z+(z\cdot\nabla)Y=Fz.
\]
This proves \eqref{eq:vz-rigorous}.

If $u$ is smooth, the classical field $F^{-1}v$ is divergence free by
the Piola identity and satisfies the same equation $v=Fz$.  The
uniqueness in Proposition~\ref{prop:closed-piola} therefore identifies
the above limiting field with $F^{-1}v$.  This construction never forms
the undefined generic product $(F^{-1}-I)v$ at regularity $s-1$.
\end{proof}

\begin{proposition}[Critical volume-preserving flows and affine Jacobians]\label{prop:critical-flow-jacobian}
Let $u\in C_T\B^{s-1}_{p,1}\cap L^1_T\B^{s+1}_{p,1}$ be divergence free, and let $X=\Id+Y$ be its flow.  Then $X$ is a volume-preserving bi-Lipschitz homeomorphism,
\[
 Y\in C_T\B^{s+1}_{p,1},\qquad F:=I+\D Y=\D X,
 \qquad F-I\in C_T\B^s_{p,1},
\]
where the derivative is understood distributionally.  Moreover
\[
 \det F=1
\]
in the affine homogeneous Besov algebra, and
\[
 A:=F^{-1}=(\operatorname{cof}F)^\top=I+(A-I),
 \qquad A-I\in C_T\B^s_{p,1}.
\]
The Piola identity
\begin{equation}\label{eq:critical-piola-identity}
 \partial_\alpha A_{\alpha\beta}=0
\end{equation}
holds in distributions.  If $u_n\to u$ in
$C_T\B^{s-1}_{p,1}\cap L^1_T\B^{s+1}_{p,1}$ with smooth divergence-free $u_n$, then the corresponding flows satisfy
\[
 Y_n\to Y\quad\hbox{in }C_T\B^{s+1}_{p,1},\qquad
 F_n-I\to F-I,\quad A_n-I\to A-I
 \quad\hbox{in }C_T\B^s_{p,1},
\]
and the identities above are the limits of the classical identities for $X_n$.
\end{proposition}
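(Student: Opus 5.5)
The plan is to argue by smooth approximation and pass every identity to the limit in the affine algebra $I+\dot B^s_{p,1}$.

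\textbf{Step 1: approximation and the flow itself.} Choose smooth, rapidly decaying, divergence-free $u_n\to u$ in $C_T\dot B^{s-1}_{p,1}\cap L^1_T\dot B^{s+1}_{p,1}$. The first half of the proof of Corollary~\ref{cor:flow-piola} already contains the needed facts, and it did not use the smallness hypothesis there. Since $\nabla u_n,\nabla u\in L^1_T\dot B^s_{p,1}\hookrightarrow L^1_TL^\infty$, the flows $X_n$, $X$ and their inverses are uniformly bi-Lipschitz, with constants of order $\exp\int_0^T\|\nabla u\|_{L^\infty}$. Each $X_n$ is volume preserving, and $X_n\to X$ uniformly. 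Volume preservation then passes to $X$: for $\varphi\in C_c$ we have $\int\varphi\circ X_n=\int\varphi$, and we may pass to the limit. Consequently $X$ is a volume-preserving bi-Lipschitz homeomorphism, and its inverse is the uniform limit of $X_n^{-1}$. For the derivative, the integral identity \eqref{eq:smooth-flow-gradient-integral}, together with Lemma~\ref{lem:composition} at index $s$ and the algebra property, shows that $G_n=\D Y_n$ is Cauchy in $C_T\dot B^s_{p,1}$. This gives $Y\in C_T\dot B^{s+1}_{p,1}$, $F-I=\D Y\in C_T\dot B^s_{p,1}$, and $Y_n\to Y$, $F_n-I\to F-I$ in those spaces, exactly as in that corollary.

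\textbf{Step 2: the algebraic identities.} For the smooth flows we have the classical facts
\[
\det F_n=1,\qquad A_n:=F_n^{-1}=(\operatorname{cof}F_n)^\top,\qquad \partial_\alpha(A_n)_{\alpha\beta}=0 .
\]
Expanding with $F_n=I+G_n$, the quantity $\det F_n-1$ is a polynomial in the entries of $G_n$ with no constant term. Likewise $(\operatorname{cof}F_n)^\top-I$ is a polynomial $\mathcal P(G_n)$ with no constant term. By Lemma~\ref{lem:standard-besov}(iv) and its multilinear version, these polynomial maps are locally Lipschitz on $\dot B^s_{p,1}$, uniformly in $t$. Hence $A_n-I=\mathcal P(G_n)\to\mathcal P(G)=:A-I$ in $C_T\dot B^s_{p,1}$, and $\det F-1=\lim(\det F_n-1)=0$. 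The identities $F_nA_n=I$ and $A_nF_n=I$ expand into finitely many products of homogeneous factors in $\dot B^s_{p,1}$. These products converge by the algebra property, so $FA=AF=I$ holds in the affine algebra, and $A=F^{-1}=(\operatorname{cof}F)^\top$.

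\textbf{Step 3: the Piola identity and the main obstacle.} The Piola identity \eqref{eq:critical-piola-identity} follows because $\partial_\alpha(A_n-I)_{\alpha\beta}=0$ and $A_n-I\to A-I$ in $C_T\dot B^s_{p,1}$, hence in distributions; derivatives commute with distributional limits.

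The only genuinely delicate point is the meaning of $\det F=1$ and of the inverse when constants are not in the homogeneous space. Following the affine convention, every such identity must be expanded into homogeneous pieces before limits are taken. One must also check that $\mathcal P(0)=0$, so that no constant is assigned a $\dot B^s_{p,1}$ norm. Once this bookkeeping is done, the passage to the limit is routine.

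A second point needs care: the limits are independent of the chosen approximation. This holds because $G=\D Y$ is identified distributionally through the uniform convergence $X_n\to X$, so $F$, and hence $A$, is determined by $X$ alone.
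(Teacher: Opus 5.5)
Your proposal is correct and follows the paper's proof essentially step by step. You take the bi-Lipschitz and volume-preserving properties from the smooth flows, and $Y_n\to Y$, $F_n-I\to F-I$ from the smallness-free first half of the proof of Corollary~\ref{cor:flow-piola}; you then pass to the limit in the constant-free polynomials $\det(I+G_n)-1$ and $(\operatorname{cof}(I+G_n))^\top-I$ via the algebra property, and obtain the Piola identity as a distributional limit. The only cosmetic difference is that you get $FA=AF=I$ by passing to the limit in the expanded identity $(F_n-I)+(A_n-I)+(F_n-I)(A_n-I)=0$, whereas the paper deduces it from the affine identity $(\operatorname{cof}F)^\top F=(\det F)I$ once $\det F=1$ is known; both are equally valid.
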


\begin{proof}
The flow and its inverse are bi-Lipschitz because $\nabla u\in L^1_TL^\infty$, and the standard ODE stability argument gives uniform convergence of smooth flows $X_n$ to $X$ whenever $u_n\to u$ in $L^1_TL^\infty$.  Volume preservation follows by passing to the limit from the smooth divergence-free flows.  The construction of $Y$ and the convergence $Y_n\to Y$ in
$C_T\B^{s+1}_{p,1}$ were obtained in the first part of the proof of Corollary~\ref{cor:flow-piola}; this part does not use the smallness assumption in that corollary.  In particular $F_n-I=\D Y_n\to F-I$ in $C_T\B^s_{p,1}$.  For each smooth divergence-free approximation,
$\det F_n=1$, $A_n=F_n^{-1}=(\operatorname{cof}F_n)^\top$, and
$\partial_\alpha(A_n)_{\alpha\beta}=0$ classically.

Write $F=I+G$ and $F_n=I+G_n$.  The quantities
$\det(I+G)-1$, $(\operatorname{cof}(I+G))^\top-I$, and their smooth analogues are finite sums of monomials with at least one factor $G$ or $G_n$.  Since $\B^s_{p,1}$ is a Banach algebra, these polynomial maps are continuous from $\B^s_{p,1}$ to itself on bounded sets.  Hence
\[
 \det F_n-1\to \det F-1,
 \qquad
 A_n-I=(\operatorname{cof}F_n)^\top-I\to (\operatorname{cof}F)^\top-I
\]
in $C_T\B^s_{p,1}$.  The first limit and $\det F_n=1$ give
$\det F=1$.  The second defines $A-I\in C_T\B^s_{p,1}$ and gives
$A=(\operatorname{cof}F)^\top$.  Since $\det F=1$, the affine matrix identity
$(\operatorname{cof}F)^\top F=(\det F)I$ gives $AF=FA=I$ in the Besov algebra, so $A=F^{-1}$.  Finally, passing to the distributional limit in
$\partial_\alpha(A_n)_{\alpha\beta}=0$ gives \eqref{eq:critical-piola-identity}.
\end{proof}

We also need a rigorous time-differentiation statement.

\begin{lemma}[Differentiating a compensated Piola identity]\label{lem:time-diff}
Let
\[
 E:=\B^{s-1}_{p,1}(\R^d)^d,
 \qquad E_\sigma:=\{a\in E:\diver a=0\}.
\]
Assume
\[
 Y\in W^{1,1}(0,T;\B^{s+1}_{p,1}),\qquad
 z\in C([0,T];E_\sigma)\cap L^2_T\B^s_{p,1},
\]
and that the smallness condition \eqref{eq:Y-small-static} holds uniformly in time.  If
\[
 w:=L_Yz\in W^{1,1}(0,T;E),
 \qquad \partial_tY\in L^2_T\B^s_{p,1},
\]
then $z\in W^{1,1}(0,T;E_\sigma)$ and, almost everywhere,
\begin{equation}\label{eq:time-diff-identity}
 \partial_tw=L_Y(\partial_tz)+(z\cdot\nabla)\partial_tY.
\end{equation}
Moreover,
\begin{equation}\label{eq:time-diff-est}
 \norm{\partial_tz}{L^1_TE}
 \lesssim
 \norm{\partial_tw}{L^1_TE}
 +\norm{z}{L^2_T\B^s_{p,1}}
  \norm{\partial_tY}{L^2_T\B^s_{p,1}}.
\end{equation}
\end{lemma}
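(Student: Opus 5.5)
We prove the lemma by turning the formal identity into an exact equation for difference quotients, solving that equation with the coercivity of Lemma~\ref{lem:coercive}, and passing to the limit. The paper mentions a vector-valued BV product rule; we try a difference-quotient argument first, because it needs only the bounds already available.

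\emph{Step 1: the difference identity.} For $h>0$ and $t\in[0,T-h]$ put $\delta_hf(t):=f(t+h)-f(t)$. Bilinearity of $\K$ gives, for every $t$,
\[
 \delta_hw(t)=L_{Y(t+h)}\delta_hz(t)+\bigl(z(t)\cdot\nabla\bigr)\delta_hY(t).
\]
Here $\delta_hz(t)\in E_\sigma$. Each term is a legitimate compensated product by Lemma~\ref{lem:null}. In the last term, $z(t)\in\B^s_{p,1}$ for almost every $t$ and $\delta_hY(t)\in\B^{s+1}_{p,1}\subset\B^s_{p,1}+\dots$. We estimate it instead through \eqref{eq:null-mid}, treating $\delta_hY$ as a $\B^s_{p,1}$ field. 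This is legitimate because the product $a\cdot\nabla w=\diver(w\otimes a)$ is defined through the algebra for $a,w\in\B^s_{p,1}$, and it agrees with the Bony definition on the intersection.

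\emph{Step 2: the uniform $L^1$ bound.} Divide by $h$ and apply \eqref{eq:coercive} with $Y(t+h)$, which is uniformly small. Then use \eqref{eq:null-mid} and H\"older in time:
\[
 \Bigl\|\tfrac{\delta_hz}{h}\Bigr\|_{L^1(0,T-h;E)}
 \le 2\Bigl\|\tfrac{\delta_hw}{h}\Bigr\|_{L^1E}
 +2C_0\|z\|_{L^2_T\B^s_{p,1}}\Bigl\|\tfrac{\delta_hY}{h}\Bigr\|_{L^2\B^s_{p,1}}.
\]
Since $w\in W^{1,1}(0,T;E)$ and $\partial_tY\in L^2_T\B^s_{p,1}$, the right-hand side is bounded by $2\|\partial_tw\|_{L^1E}+2C_0\|z\|_{L^2\B^s}\|\partial_tY\|_{L^2\B^s}$, uniformly in $h$. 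We use here that difference quotients of a $W^{1,1}$ or $W^{1,2}$ Bochner function are controlled by the derivative norm. The point of this bound is that it excludes blow-up, but a bound in $L^1$ alone gives only $z\in BV(0,T;E)$, and this is exactly the main obstacle. Indeed $E$ is not reflexive and lacks the Radon--Nikodym property, so a bounded $L^1$ sequence of difference quotients might converge to a measure (Remark~\ref{rem:fatou-q1}).

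\emph{Step 3: strong convergence of the difference quotients.} To overcome this, we show that $\delta_hz/h$ is Cauchy in $L^1(0,T-\eta;E)$ rather than merely bounded. Take $h,h'$ and subtract the two identities. The difference $g:=\delta_hz/h-\delta_{h'}z/{h'}$ satisfies
\[
 L_{Y(t+h)}g=\Bigl(\tfrac{\delta_hw}{h}-\tfrac{\delta_{h'}w}{h'}\Bigr)
 -\Bigl(z\cdot\nabla\Bigr)\Bigl(\tfrac{\delta_hY}{h}-\tfrac{\delta_{h'}Y}{h'}\Bigr)
 -\K_{Y(t+h)-Y(t+h')}\tfrac{\delta_{h'}z}{h'}.
\]
Apply coercivity again. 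The first two terms tend to $0$ in $L^1E$, because difference quotients converge strongly in $L^1E$ and in $L^2\B^s_{p,1}$ for Sobolev--Bochner functions. The third term is bounded by $C_0\sup_t\|Y(t+h)-Y(t+h')\|_{\B^{s+1}_{p,1}}$ times the uniform $L^1$ bound of Step 2. It tends to zero by the uniform continuity of $Y\in C_T\B^{s+1}_{p,1}$. Hence $\delta_hz/h\to\zeta$ strongly in $L^1_{\mathrm{loc}}([0,T);E)$.

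\emph{Step 4: identification.} The strong limit identifies $\zeta=\partial_tz$ distributionally. It is solenoidal, and $z\in W^{1,1}(0,T;E_\sigma)$; near $t=T$ we use backward quotients. Passing to the limit in the divided identity of Step 1, using Lemma~\ref{lem:null} for continuity of each bilinear term, yields \eqref{eq:time-diff-identity} almost everywhere. The estimate \eqref{eq:time-diff-est} then follows by lower semicontinuity from the Step 2 bound, or directly from \eqref{eq:time-diff-identity} via \eqref{eq:coercive} and \eqref{eq:null-time-2}.
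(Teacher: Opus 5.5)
Your argument is correct, and it takes a genuinely different and more elementary route than the paper. The paper stops at the point of your Step~2: the uniform $L^1$ bound on difference quotients only gives $z\in BV(0,T;E_\sigma)$. It then invokes the Radon--Nikodym property of $\dot B^{s-1}_{p,1}$, proved in Appendix~\ref{sec:bv-appendix}, to split $Dz=\dot z\,dt+D^sz$. Next it applies a vector-valued BV product rule to $L(t)z(t)=w(t)$. Since $\partial_tw$ and $(z\cdot\nabla)\partial_tY$ are absolutely continuous, the singular part $L\,D^sz$ must vanish, and coercivity then kills $D^sz$ because its polar vector is solenoidal.

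Your Step~3 replaces all of this by applying the same coercivity to the difference $g$ of two difference quotients, which lies in $E_\sigma$. Besides the Step~2 bound, the only input is that $t\mapsto\K_{Y(t)}$ is uniformly continuous in $\mathcal L(E_\sigma,E)$. This follows from $Y\in W^{1,1}(0,T;\dot B^{s+1}_{p,1})\subset C([0,T];\dot B^{s+1}_{p,1})$ and \eqref{eq:null-low}.

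What each approach buys: yours gives strong $L^1_{\rm loc}$ convergence of $\delta_hz/h$ directly, uses no geometric property of $E$, and needs only continuity of the operator path, not $W^{1,1}$ regularity in operator norm. The paper's version isolates the structural reason, namely that there is no singular time derivative on the solenoidal domain. In both, the solenoidal domain is essential exactly through coercivity.

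Two small corrections.
\begin{enumerate}
\item Your claim that $E$ lacks the Radon--Nikodym property is false. $\dot B^{s-1}_{p,1}$ is isomorphic to a closed subspace of $\ell^1(L^p)$, which is a separable dual, so it has RNP, as the paper shows. This does not affect your argument, and the obstacle you identify is real regardless: bounded $L^1$ difference quotients can concentrate to a singular measure even for scalar functions.
\item In Step~1, the inclusion ``$\dot B^{s+1}_{p,1}\subset\dot B^s_{p,1}+\dots$'' is meaningless for homogeneous spaces. The membership $\delta_hY(t)\in\dot B^s_{p,1}$ comes from $\delta_hY(t)=\int_t^{t+h}\partial_tY\,d\tau$ with $\partial_tY\in L^2_T\dot B^s_{p,1}$, which is what Step~2 actually uses.
\end{enumerate}
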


\begin{proof}
Set $K(t)a=(a\cdot\nabla)Y(t)$ on $E_\sigma$.  Lemma~\ref{lem:null} gives
\[
 \|K(t)\|_{\mathcal L(E_\sigma,E)}
 \le C\|Y(t)\|_{\dot B^{s+1}_{p,1}}.
\]
Because $Y\in W^{1,1}(0,T;\dot B^{s+1}_{p,1})$, its time derivative
also belongs to $L^1_T\dot B^{s+1}_{p,1}$.  Hence, for almost every
$t$,
\[
 K'(t)a=(a\cdot\nabla)\partial_tY(t),
 \qquad
 \|K'(t)\|_{\mathcal L(E_\sigma,E)}
 \le C\|\partial_tY(t)\|_{\dot B^{s+1}_{p,1}}.
\]
Consequently $L(t):=I+K(t)$ belongs to
$W^{1,1}(0,T;\mathcal L(E_\sigma,E))$.  The additional assumption
$\partial_tY\in L^2_T\dot B^s_{p,1}$ is not needed for this operator
absolute continuity; it is used below to estimate
$(z\cdot\nabla)\partial_tY$ by the middle compensated bound.

We first prove that $z$ is an $E_\sigma$-valued BV function.  For
$0<h<T$ and $0<t<T-h$, subtraction at times $t+h$ and $t$ gives the exact identity
\[
 \delta_hw(t)
 =L_{Y(t+h)}\delta_hz(t)
 +(z(t)\cdot\nabla)\delta_hY(t),
\]
where $\delta_hf(t)=h^{-1}(f(t+h)-f(t))$.  Coercivity at time $t+h$ therefore yields
\[
 \|\delta_hz(t)\|_E
 \lesssim
 \|\delta_hw(t)\|_E
 +\|(z(t)\cdot\nabla)\delta_hY(t)\|_E.
\]
The first term is uniformly integrable because $w\in W^{1,1}(0,T;E)$.  For the second, use the middle-regularity compensated estimate and Cauchy--Schwarz in time:
\[
 \int_0^{T-h}\|(z(t)\cdot\nabla)\delta_hY(t)\|_E\,dt
 \lesssim
 \|z\|_{L^2_T\dot B^s_{p,1}}
 \|\delta_hY\|_{L^2(0,T-h;\dot B^s_{p,1})}.
\]
The last factor is bounded by
$\|\partial_tY\|_{L^2_T\dot B^s_{p,1}}$.  Hence the difference quotients of $z$ are uniformly bounded in $L^1(0,T-h;E)$.  By the standard Banach-valued BV criterion, equivalently by testing against smooth $E'$-valued functions and using duality, this implies $z\in BV(0,T;E_\sigma)$.  The difference quotients are solenoidal, so the distributional derivative $Dz$ takes values in the closed subspace $E_\sigma$.

The space $E$ has the Radon--Nikodym property, as proved in
\cref{sec:bv-appendix}; the closed subspace $E_\sigma$ has the same property.  Thus $Dz$ admits the decomposition
\[
 Dz=\dot z\,dt+D^sz
\]
and the singular part has a polar representation
$D^sz=\xi\,|D^sz|$ with $\xi(t)\in E_\sigma$ for
$|D^sz|$-almost every $t$.  The term $(z\cdot\nabla)\partial_tY$ belongs to $L^1(0,T;E)$ by the middle estimate in Lemma~\ref{lem:null} and Cauchy--Schwarz:
\[
 \|(z\cdot\nabla)\partial_tY\|_{L^1_TE}
 \lesssim
 \|z\|_{L^2_T\dot B^s_{p,1}}
 \|\partial_tY\|_{L^2_T\dot B^s_{p,1}}.
\]
The vector-valued BV product rule from
\cref{sec:bv-appendix}, applied to $L(t)z(t)=w(t)$, gives the $E$-valued measure identity
\begin{equation}\label{eq:measure-diff}
 (\partial_tw)\,dt=L(t)Dz+(z\cdot\nabla)\partial_tY\,dt.
\end{equation}
The rightmost term is therefore absolutely continuous.  Hence the singular part of \eqref{eq:measure-diff} is
\[
 L(t)D^sz=0.
\]
Equivalently, $L(t)\xi(t)=0$ for $|D^sz|$-almost every $t$.  Since
$\xi(t)\in E_\sigma$, the uniform coercivity of $L(t)$ implies
\[
 \|\xi(t)\|_E\le2\|L(t)\xi(t)\|_E=0.
\]
Thus $D^sz=0$ and $z\in W^{1,1}(0,T;E_\sigma)$.  Taking the absolutely continuous part of \eqref{eq:measure-diff} yields
\[
 \partial_tw=L_Y(\partial_tz)+(z\cdot\nabla)\partial_tY
\]
for almost every $t$.

Finally,
\[
 L_Y(\partial_tz)=\partial_tw-(z\cdot\nabla)\partial_tY.
\]
Coercivity and Lemma~\ref{lem:null}, now at middle regularity and with Hölder's inequality in time, give
\[
 \|\partial_tz\|_{L^1_TE}
 \lesssim
 \|\partial_tw\|_{L^1_TE}
 +\|z\|_{L^2_T\dot B^s_{p,1}}
  \|\partial_tY\|_{L^2_T\dot B^s_{p,1}},
\]
which is \eqref{eq:time-diff-est}.
\end{proof}

\begin{remark}\label{rem:solenoidal-singular-part}
The preceding proof is the only place where the domain of the graph must be the solenoidal subspace rather than the whole negative Besov space.  In measure form one has
$D(L_Yz)=L_YDz+(z\cdot\nabla)\partial_tY\,dt$.  The singular part is therefore $L_YD^sz$.  Since the polar vector of $D^sz$ remains solenoidal, coercivity of $L_Y$ on $E_\sigma$ forces $D^sz=0$.  Without this domain information the BV argument would not exclude a singular time derivative.
\end{remark}

\begin{lemma}[Product rule for compensated transport products]\label{lem:compensated-product-rule}
Let
\[
 a\in W^{1,1}(0,T;\dot B^{s-1}_{p,1})\cap L^2_T\dot B^s_{p,1},
 \qquad \operatorname{div}a=0,
\]
with $\partial_ta$ solenoidal in the distributional sense, and let
\[
 Y\in W^{1,1}(0,T;\dot B^{s+1}_{p,1}),
 \qquad \partial_tY\in L^2_T\dot B^s_{p,1}.
\]
Then
\[
 (a\cdot\nabla)Y\in W^{1,1}(0,T;\dot B^{s-1}_{p,1})
\]
and, almost everywhere in time,
\begin{equation}\label{eq:compensated-product-rule}
 \partial_t\bigl[(a\cdot\nabla)Y\bigr]
 = (\partial_ta\cdot\nabla)Y+(a\cdot\nabla)\partial_tY
 \quad\hbox{in }\dot B^{s-1}_{p,1}.
\end{equation}
Moreover,
\begin{align}\label{eq:compensated-product-rule-est}
 &\left\|\partial_t\bigl[(a\cdot\nabla)Y\bigr]\right\|_{L^1_T\dot B^{s-1}_{p,1}}\notag\\
 &\qquad\lesssim
 \|\partial_ta\|_{L^1_T\dot B^{s-1}_{p,1}}
 \|Y\|_{L^\infty_T\dot B^{s+1}_{p,1}}
 +\|a\|_{L^2_T\dot B^s_{p,1}}
  \|\partial_tY\|_{L^2_T\dot B^s_{p,1}}.
\end{align}
\end{lemma}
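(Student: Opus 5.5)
The plan is to regularize in time and pass to the limit. I will work with the bilinear map $B(a,Y):=(a\cdot\nabla)Y$. By Lemma~\ref{lem:null}, $B$ is bounded on $E_\sigma\times\dot B^{s+1}_{p,1}\to E$, where $E=\dot B^{s-1}_{p,1}$ and $E_\sigma$ is its solenoidal subspace. It is also bounded on $(\dot B^s_{p,1})_\sigma\times\dot B^{s}_{p,1}\to E$, since there $\nabla Y$ enters only through $w=Y$, by \eqref{eq:null-mid}.

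First, I mollify in time. For a standard even mollifier $\rho_\varepsilon$, extend $a$ and $Y$ to $\mathbb R$ by reflection or by constants, and set $a_\varepsilon=\rho_\varepsilon*a$ and $Y_\varepsilon=\rho_\varepsilon*Y$. Then $a_\varepsilon\in C^1([0,T];E_\sigma)$ with $\partial_ta_\varepsilon=\rho_\varepsilon*\partial_ta$ solenoidal, and $Y_\varepsilon\in C^1([0,T];\dot B^{s+1}_{p,1})$. Moreover
\[
a_\varepsilon\to a \text{ in } W^{1,1}_TE\cap L^2_T\dot B^s_{p,1},\qquad
Y_\varepsilon\to Y \text{ in } W^{1,1}_T\dot B^{s+1}_{p,1},\qquad
\partial_tY_\varepsilon\to\partial_tY \text{ in } L^2_T\dot B^s_{p,1}.
\]
Near the endpoints I would use one-sided mollification, or simply prove the claim on compact subintervals and then let them exhaust $(0,T)$. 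For $C^1$ Banach-valued functions, bilinearity and boundedness of $B$ give the classical product rule
\[
\partial_tB(a_\varepsilon,Y_\varepsilon)=B(\partial_ta_\varepsilon,Y_\varepsilon)+B(a_\varepsilon,\partial_tY_\varepsilon),
\]
and this identity holds in $E$. Here the first term uses the low estimate \eqref{eq:null-low}, which needs $\partial_ta_\varepsilon$ solenoidal. The second term uses \eqref{eq:null-mid}, which needs $a_\varepsilon\in\dot B^s_{p,1}$ solenoidal and $\partial_tY_\varepsilon\in\dot B^s_{p,1}$.

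Second, I pass to the limit. Integrate the smooth identity in time to get, for every $t$,
\[
B(a_\varepsilon,Y_\varepsilon)(t)=B(a_\varepsilon,Y_\varepsilon)(0)+\int_0^t\bigl[B(\partial_ta_\varepsilon,Y_\varepsilon)+B(a_\varepsilon,\partial_tY_\varepsilon)\bigr]\,d\tau .
\]
The terms converge as follows.
\begin{enumerate}[label=\textup{(\alph*)}]
\item The left side and the initial value converge in $E$ for each $t$. Indeed $W^{1,1}\hookrightarrow C$ gives $a_\varepsilon\to a$ in $C_TE$ and $Y_\varepsilon\to Y$ in $C_T\dot B^{s+1}_{p,1}$, and \eqref{eq:null-low} controls the bilinear differences.
\item The first integrand converges in $L^1_TE$. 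This follows from \eqref{eq:null-time-1} applied to $\partial_ta_\varepsilon-\partial_ta$ and $Y_\varepsilon-Y$, using the uniform $L^\infty_T\dot B^{s+1}_{p,1}$ bounds.
\item The second integrand converges in $L^1_TE$ by \eqref{eq:null-time-2}, split bilinearly.
\end{enumerate}
Hence the integral identity holds for the limit, with integrand in $L^1_TE$. This shows that $B(a,Y)\in W^{1,1}(0,T;E)$ with derivative given by \eqref{eq:compensated-product-rule} almost everywhere. The estimate \eqref{eq:compensated-product-rule-est} then follows directly from \eqref{eq:null-time-1} and \eqref{eq:null-time-2}.

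The main obstacle is making sure every product is taken in a regime covered by Lemma~\ref{lem:null}. The point needing care is that the regularization preserves the solenoidal constraint on both $a$ and $\partial_ta$. Time mollification commutes with $\operatorname{div}$, so this holds. The endpoint extension must also preserve it: constant or reflected extension of a solenoidal field is solenoidal, so this holds too. A secondary point is that $\partial_tY$ is only in $L^1_T\dot B^{s+1}_{p,1}\cap L^2_T\dot B^s_{p,1}$. The bound for $B(a,\partial_tY)$ must therefore use the middle estimate with $a\in L^2_T\dot B^s_{p,1}$, rather than the low estimate, because $a\in L^\infty_TE$ paired with $\partial_tY\in L^1_T\dot B^{s+1}_{p,1}$ would also work but does not give the stated form.
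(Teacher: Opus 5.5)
Your proposal is correct and follows essentially the paper's route: regularize so that $a$ and $\partial_ta$ stay solenoidal, apply the ordinary product rule to the approximants, and pass to the limit using \eqref{eq:null-time-1} for $(\partial_ta\cdot\nabla)Y$ and \eqref{eq:null-time-2} for $(a\cdot\nabla)\partial_tY$. The paper mollifies in space and time and re-projects with $\PP$, whereas your time-only mollification keeps solenoidality automatically and uses only that the bilinear map is bounded.

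One small caveat concerns your endpoint options: use reflection or compact subintervals, not extension by constants. The reason is that $a(0)$ is only known to lie in $\dot B^{s-1}_{p,1}$. A constant extension therefore generally destroys the $L^2_T\dot B^s_{p,1}$ convergence of $a_\varepsilon$ used in your step (c). The alternative fix is to pass to the limit in step (c) with the low estimate and $\partial_tY\in L^1_T\dot B^{s+1}_{p,1}$, which you already note is possible.
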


\begin{proof}
Extend $a$ and $Y$ slightly outside $(0,T)$ and apply standard space--time mollifiers.  Applying the Leray projector in the spatial variables to the approximants of $a$, we may choose smooth $a^m,Y^m$ such that $a^m$ and $\partial_ta^m$ are solenoidal and
\[
 a^m\to a\quad\hbox{in }W^{1,1}_T\dot B^{s-1}_{p,1}
       \cap L^2_T\dot B^s_{p,1},
\]
while
\[
 Y^m\to Y\quad\hbox{in }W^{1,1}_T\dot B^{s+1}_{p,1},
 \qquad
 \partial_tY^m\to\partial_tY
 \quad\hbox{in }L^2_T\dot B^s_{p,1}.
\]
For smooth functions the identity is the ordinary product rule.  The
term $(\partial_ta^m\cdot\nabla)Y^m$ converges in
$L^1_T\dot B^{s-1}_{p,1}$ by Lemma~\ref{lem:null} in the low estimate,
whereas $(a^m\cdot\nabla)\partial_tY^m$ converges by the middle estimate
\eqref{eq:null-time-2}.  The same estimates show that
$(a^m\cdot\nabla)Y^m\to(a\cdot\nabla)Y$ in
$L^\infty_T\dot B^{s-1}_{p,1}$.  Passing to the limit gives
\eqref{eq:compensated-product-rule}; the bound
\eqref{eq:compensated-product-rule-est} follows directly from the two
estimates just used.
\end{proof}

\section{General Lagrangian formulation and short-time bounds}\label{sec:lagrangian}

Let $(u_i,r_i,P_i)$, $i=1,2$, be two solutions on a common interval.  Let $X_i$ be the volume-preserving flow of $u_i$ and set
\[
 Y_i=X_i-\Id,\qquad F_i=\D X_i=I+\D Y_i,
 \qquad A_i=F_i^{-1},
\]
\[
 v_i=u_i\circ X_i,\qquad R_i=r_i\circ X_i,
 \qquad q_i=P_i\circ X_i.
\]
Here $F_i=I+\D Y_i$ is the distributional derivative constructed in
Corollary~\ref{cor:flow-piola}, and the affine Jacobian facts are those of Proposition~\ref{prop:critical-flow-jacobian}.  Thus
$F_i-I\in\B^s_{p,1}\hookrightarrow L^\infty$, $\det F_i=1$, and the inverse $A_i=F_i^{-1}$ is understood in the affine
Besov algebra, equivalently by the cofactor formula
$A_i=(\operatorname{cof}F_i)^\top$.
Our convention is
$(F_i)_{k\ell}=\partial_\ell(X_i)_k$ and
$(\nabla v_i)_{k\ell}=\partial_\ell(v_i)_k$.
Since $\det F_i=1$, the Piola identity reads
\begin{equation}\label{eq:piola-general}
 \partial_\alpha(A_i)_{\alpha\beta}=0.
\end{equation}

We record the transformed equations in detail.  Along the flow,
\[
 \partial_tv_i=(\partial_tu_i+u_i\cdot\nabla_xu_i)\circ X_i.
\]
The chain rule gives
\[
 \nabla_yv_i=((\nabla_xu_i)\circ X_i)F_i,
 \qquad
 (\nabla_xu_i)\circ X_i=\nabla_yv_iA_i.
\]
Thus the pulled-back velocity gradient is
\begin{equation}\label{eq:M-def-gradient}
 M_i:=(\nabla_xu_i)\circ X_i=\nabla_yv_iA_i.
\end{equation}
Since the equilibrium $\bar a$ is constant, composition of the internal-variable equation with $X_i$ removes the transport term and gives the Banach-space ODE
\begin{equation}\label{eq:internal-ode}
 \partial_tR_i=\mathscr G(R_i,M_i),
 \qquad R_i(0)=r_{0,i}.
\end{equation}

For the pressure,
\[
 \nabla_yq_i=F_i^\top((\nabla_xP_i)\circ X_i),
 \qquad
 (\nabla_xP_i)\circ X_i=A_i^\top\nabla_yq_i.
\]
For the viscous term, componentwise,
\begin{align*}
 ((\Delta_xu_i)\circ X_i)_\beta
 &=A_{i,\alpha k}\partial_\alpha
    \bigl(A_{i,\gamma k}\partial_\gamma(v_i)_\beta\bigr)\\
 &=\partial_\alpha\bigl((A_iA_i^\top)_{\alpha\gamma}
       \partial_\gamma(v_i)_\beta\bigr).
\end{align*}
In the last equality we used the Piola identity.  With $C_i=A_iA_i^\top$, this is the row-wise divergence of $\nabla_yv_i\,C_i$, because
\[
 (\nabla_yv_i\,C_i)_{\beta\alpha}
 =\partial_\gamma(v_i)_\beta(C_i)_{\gamma\alpha}
 =\partial_\gamma(v_i)_\beta(C_i)_{\alpha\gamma}.
\]
Thus the matrix order in the Lagrangian viscous tensor is $\nabla v_i\,C_i$.  More generally, if the row-wise divergence of a matrix $S$ is
$(\diver_x S)_\beta=\partial_{x_k}S_{\beta k}$, then
\begin{align}
 ((\diver_xS)\circ X_i)_\beta
 &=A_{i,\alpha k}\partial_\alpha(S_{\beta k}\circ X_i)\notag\\
 &=\partial_\alpha\bigl((S\circ X_i)_{\beta k}
 A_{i,\alpha k}\bigr),
 \label{eq:stress-change}
\end{align}
that is,
\[
 (\diver_x S)\circ X_i
 =\diver_y\bigl((S\circ X_i)A_i^\top\bigr).
\]
For smooth fields, the pulled-back momentum equation is
\begin{equation}\label{eq:lag-mom-general}
 \partial_tv_i-\nu\operatorname{div}(\nabla v_i\,A_iA_i^\top)
 +A_i^\top\nabla q_i
 =\operatorname{div}\bigl(\mathscr S(R_i)A_i^\top\bigr).
\end{equation}
At the critical regularity this display is not used as a definition of
$A_i^\top\nabla q_i$.  The product
$(A_i-I)^\top\nabla q_i$ would be exactly of the forbidden type
$\dot B^s_{p,1}\cdot\dot B^{s-1}_{p,1}$.  Instead, using the Piola
identity, we always rewrite the pressure contribution in divergence
form:
\begin{equation}\label{eq:pressure-piola-single}
 (I-A_i^\top)\nabla q_i
 =\operatorname{div}\bigl((I-A_i^\top)q_i\bigr),
\end{equation}
where the divergence is row-wise, i.e.
$\bigl(\operatorname{div}M\bigr)_\beta=\partial_\alpha M_{\beta\alpha}$.
Thus the critical Lagrangian momentum equation is the perturbative
Stokes identity
\begin{equation}\label{eq:lag-mom-critical}
 \partial_tv_i-\nu\Delta v_i+\nabla q_i
 =\operatorname{div}\Bigl(
 \nu\nabla v_i\,(A_iA_i^\top-I)
 +(I-A_i^\top)q_i
 +\mathscr S(R_i)A_i^\top\Bigr).
\end{equation}
Here and below all affine products are expanded before estimating:
\[
 A_iA_i^\top-I=(A_i-I)+(A_i-I)^\top
 +(A_i-I)(A_i-I)^\top,
\]
\[
 \mathscr S(R_i)A_i^\top
 =\mathscr S(R_i)+\mathscr S(R_i)(A_i-I)^\top,
 \qquad
 (I-A_i^\top)q_i=-(A_i-I)^\top q_i.
\]
Consequently only genuine $\dot B^s_{p,1}$ quantities are multiplied.  In particular, the viscous perturbation is the usual matrix product $\nabla v_i\,(A_iA_i^\top-I)$ followed by row-wise divergence.
The classical form \eqref{eq:lag-mom-general} is retained only as a
mnemonic for smooth approximations; the rigorous critical equation is
\eqref{eq:lag-mom-critical}.
Finally,
\[
 (A_i)_{\alpha\beta}\partial_\alpha(v_i)_\beta
 =((\partial_{x_\beta}u_{i,\beta})\circ X_i)=0,
\]
so the incompressibility constraint is
\begin{equation}\label{eq:lag-div-general}
 \mathcal D_{A_i}v_i:=(A_i)_{\alpha\beta}
 \partial_\alpha(v_i)_\beta=0.
\end{equation}
For smooth fields this is equivalent to $\diver(A_iv_i)=0$.  At critical regularity the latter product is not used; instead
\[
 \mathcal D_{A_i}v_i=\diver v_i+
 \sum_{\alpha,\beta}(A_i-I)_{\alpha\beta}\partial_\alpha(v_i)_\beta,
\]
and the last term belongs to $L^1_T\B^s_{p,1}$ by the algebra property.  All identities are first obtained for smooth approximations and then passed to the critical class using Lemma~\ref{lem:composition} and Proposition~\ref{prop:critical-flow-jacobian}.

\begin{lemma}[Lagrangian regularity and short-time smallness]\label{lem:lag-regularity}
For each $i$,
\begin{align}
 v_i&\in C_T\B^{s-1}_{p,1}
 \cap L^1_T\B^{s+1}_{p,1}
 \cap L^2_T\B^s_{p,1},\label{eq:v-reg}\\
 q_i&\in L^1_T\B^s_{p,1},
 \qquad \partial_tv_i\in L^1_T\B^{s-1}_{p,1},\label{eq:v-time-reg}\\
 R_i&\in C_T\B^s_{p,1}(\mathcal W),
 \qquad \partial_tR_i\in L^1_T\B^s_{p,1}.\label{eq:R-reg}
\end{align}
Let
\begin{align}
 \eta_T:=\sum_{i=1}^2\Bigl(&
 \norm{v_i}{L^2_T\B^s_{p,1}}
 +\norm{v_i}{L^1_T\B^{s+1}_{p,1}}
 +\norm{\partial_tv_i}{L^1_T\B^{s-1}_{p,1}}
 +\norm{q_i}{L^1_T\B^s_{p,1}}\Bigr)+T.
 \label{eq:eta-general}
\end{align}
For a fixed pair of solutions, $\eta_T\to0$ as $T\downarrow0$.  For data converging to fixed data this convergence is uniform on the common interval of Proposition~\ref{prop:local-input}.  Moreover, for all sufficiently small $T$,
\begin{align}
 &\norm{Y_i}{L^\infty_T\B^{s+1}_{p,1}}
 +\norm{F_i-I}{L^\infty_T\B^s_{p,1}}
 +\norm{A_i-I}{L^\infty_T\B^s_{p,1}}
 \lesssim\eta_T,\label{eq:flow-small}\\
 &\norm{Y_i}{L^\infty_T\B^s_{p,1}}
 \lesssim T^{1/2}\norm{v_i}{L^2_T\B^s_{p,1}}.
 \label{eq:flow-displacement}
\end{align}
\end{lemma}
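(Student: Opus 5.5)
We pull each Eulerian regularity statement of Proposition~\ref{prop:local-input} back through the flow with Lemma~\ref{lem:composition} and Proposition~\ref{prop:critical-flow-jacobian}, using the Lagrangian identities of this section for smooth approximations and passing to the limit.

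\emph{Step 1: regularity \eqref{eq:v-reg}--\eqref{eq:R-reg}.} For $v_i=u_i\circ X_i$, the composition bound at indices $s-1$ and $s$ (both in $(-1,1)$) gives $v_i\in C_T\B^{s-1}_{p,1}\cap L^2_T\B^s_{p,1}$. Time continuity comes from the strong-convergence part of Lemma~\ref{lem:composition}, applied to $u_i(t)\circ X_i(t)$ with $X_i$ uniformly continuous in time. The index $s+1\ge1$ lies outside the range of Lemma~\ref{lem:composition}, so for it we use $\nabla v_i=M_iF_i$ with $M_i=(\nabla u_i)\circ X_i\in L^1_T\B^s_{p,1}$ (composition at index $s$). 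The algebra property then gives $\nabla v_i=M_i+M_i(F_i-I)\in L^1_T\B^s_{p,1}$, and Lemma~\ref{lem:standard-besov}(ii) converts this to $v_i\in L^1_T\B^{s+1}_{p,1}$.

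The same composition gives $q_i\in L^1_T\B^s_{p,1}$, $\partial_tv_i=(D_tu_i)\circ X_i\in L^1_T\B^{s-1}_{p,1}$, $R_i\in C_T\B^s_{p,1}$, and $\partial_tR_i=(\partial_tr_i+u_i\cdot\nabla r_i)\circ X_i$, all via \eqref{eq:material-class}. $R_i$ takes values in $\mathcal W$ because composition acts pointwise on the values. The chain-rule identities for $\partial_tv_i$ and $\partial_tR_i$ are first verified for the smooth approximations of Proposition~\ref{prop:local-input} and then passed to the limit distributionally, using Lemma~\ref{lem:composition}(i). Bounds that rely only on weak limits are recovered with Lemma~\ref{lem:bochner-besov-fatou}.

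\emph{Step 2: $\eta_T\to0$ and its uniformity.} Each term in \eqref{eq:eta-general} is the norm of a fixed Bochner-integrable function over $(0,T)$, so it tends to zero by absolute continuity of the integral. The composition constants depend only on the bi-Lipschitz bounds, and those are uniformly bounded by $\exp\|\nabla u_i\|_{L^1_TL^\infty}$. Hence $\eta_T$ is controlled by the corresponding Eulerian norms on $(0,T)$. Uniformity for converging data follows from the uniform absolute continuity at the initial time stated in Proposition~\ref{prop:local-input}, together with the pressure bound \eqref{eq:pressure-general}. I expect this uniformity step to be the main point needing care. The flow's Lipschitz constants must be fixed once, on the common interval, before shrinking $T$, so that the constant does not deteriorate. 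For $\partial_tv_i$, one should either carry the uniform integrability of $D_tu_i$ from Proposition~\ref{prop:local-input}, or re-express $\partial_tv_i$ through \eqref{eq:lag-mom-critical}, whose right side is uniformly small.

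\emph{Step 3: the flow bounds \eqref{eq:flow-small} and \eqref{eq:flow-displacement}.} Passing to the limit in the integral identity from the proof of Corollary~\ref{cor:flow-piola} gives
\[
 \D Y_i(t)=\int_0^t M_i\bigl(I+\D Y_i\bigr)\,d\tau .
\]
Since $M_i=\nabla v_iA_i$, we could instead use the cleaner Lagrangian form $\partial_t\D Y_i=\nabla v_i$, which holds because $\partial_tX_i=v_i$. Integrating,
\[
 \D Y_i(t)=\int_0^t\nabla v_i\,d\tau,\qquad
 \|Y_i\|_{L^\infty_T\B^{s+1}_{p,1}}\lesssim\|v_i\|_{L^1_T\B^{s+1}_{p,1}}\le\eta_T .
\]
This also bounds $F_i-I=\D Y_i$. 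The relation $\partial_tY_i=v_i$ is justified by flow convergence in distributions. For $A_i-I$, the cofactor formula expresses it as a polynomial in $\D Y_i$ with no constant term. Once $\eta_T$ is small enough, the algebra property bounds its norm linearly by $\eta_T$; this is where ``for sufficiently small $T$'' enters. Finally, $Y_i(t)=\int_0^tv_i\,d\tau$ together with Cauchy--Schwarz in time gives
\[
 \|Y_i(t)\|_{\B^s_{p,1}}\le T^{1/2}\|v_i\|_{L^2_T\B^s_{p,1}},
\]
which is \eqref{eq:flow-displacement}.
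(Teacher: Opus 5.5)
Your proposal is correct and follows the paper's proof essentially step by step. Both arguments use composition at indices $s-1$ and $s$, the chain rule $\nabla v_i=((\nabla u_i)\circ X_i)F_i$ for the $L^1_T\dot B^{s+1}_{p,1}$ bound, the material derivative for $\partial_tv_i$, the uniform absolute continuity of Proposition~\ref{prop:local-input} for $\eta_T$, and $Y_i=\int_0^tv_i\,d\tau$ for the flow bounds. The differences are cosmetic: you compose $u_i\in L^2_T\dot B^s_{p,1}$ directly instead of interpolating, compose the Eulerian material derivative of $r_i$ instead of using the ODE with the tame bound, cite Proposition~\ref{prop:critical-flow-jacobian} for $F_i-I$ instead of rerunning Gronwall, and bound $A_i-I$ through the cofactor polynomial instead of absorbing $KG$ in $K=-G-KG$.
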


\begin{proof}
We suppress the index $i$.  Since $s-1\in(-1,0)$ and $X(t)$ is volume preserving with uniform bi-Lipschitz bounds on bounded time intervals, Lemma~\ref{lem:composition} gives
\[
 \norm{v(t)}{\B^{s-1}_{p,1}}
 \le C_X\norm{u(t)}{\B^{s-1}_{p,1}}.
\]
Strong time continuity follows from the time-dependent part of the same lemma.

Write $G=F-I=\D Y$.  By the smooth-flow approximation construction in Proposition~\ref{prop:critical-flow-jacobian}, $G$ satisfies the integral identity
\[
 G(t)=\int_0^t ((\nabla u)\circ X)(\tau)(I+G(\tau))\,\dd\tau .
\]
Equivalently, in $L^1(0,T;\B^s_{p,1})$,
\[
 \partial_tG=(\nabla u)\circ X+((\nabla u)\circ X)G,
 \qquad G(0)=0.
\]
Composition at index $s$ and the algebra property imply
\[
 \norm{G(t)}{\B^s_{p,1}}
 \le C_X\int_0^t\norm{u(\tau)}{\B^{s+1}_{p,1}}
 \bigl(1+\norm{G(\tau)}{\B^s_{p,1}}\bigr)\dd\tau.
\]
Gronwall's lemma yields $G\in C_T\B^s_{p,1}$ and
\begin{equation}\label{eq:G-flow-bound}
 \norm{G}{L^\infty_T\B^s_{p,1}}
 \le C_X\exp\bigl(C_X\norm{u}{L^1_T\B^{s+1}_{p,1}}\bigr)
 \norm{u}{L^1_T\B^{s+1}_{p,1}}.
\end{equation}
At this point the cofactor formula gives $A=(\operatorname{cof}F)^\top$.
Since $\det F=1$, $A-I$ is a finite polynomial in $G=F-I$ without
constant term; hence $A-I\in C_T\B^s_{p,1}$ by the algebra property.
Thus multiplication by $A=I+(A-I)$ preserves the $\B^s_{p,1}$ estimates
used below.

The chain rule reads
\[
 \nabla v=((\nabla u)\circ X)(I+G).
\]
Thus
\[
 \norm{\nabla v(t)}{\B^s_{p,1}}
 \le C_X\norm{u(t)}{\B^{s+1}_{p,1}}
 \bigl(1+\norm{G(t)}{\B^s_{p,1}}\bigr),
\]
and integration gives $v\in L^1_T\B^{s+1}_{p,1}$.  Lemma~\ref{lem:standard-besov}(iii) then gives
$v\in L^2_T\B^s_{p,1}$.

For the pressure, $q=P\circ X$ and composition at index $s$ yields
$q\in L^1_T\B^s_{p,1}$.  For the time derivative it is essential to use the material derivative:
\[
 \partial_tv=(\partial_tu+u\cdot\nabla u)\circ X=(D_tu)\circ X.
\]
Proposition~\ref{prop:local-input} and composition at index $s-1$ give
$\partial_tv\in L^1_T\B^{s-1}_{p,1}$.

Next, \eqref{eq:M-def-gradient}, the algebra property, and the bound for $A$ imply
\[
 \norm{M}{L^1_T\B^s_{p,1}}
 \le C_X\norm{v}{L^1_T\B^{s+1}_{p,1}}.
\]
Since $R\in C_T\B^s_{p,1}$ by composition, the ODE
\eqref{eq:internal-ode} and Lemma~\ref{lem:constitutive-tame} show that
$\partial_tR\in L^1_T\B^s_{p,1}$.

The time-integrable Eulerian norms in Proposition~\ref{prop:local-input} are absolutely continuous at zero.  The composition constants are uniform whenever
$\int_0^T\norm{\nabla u}{L^\infty}\dd t$ is uniformly bounded.  Hence
\[
 \norm{v}{L^1_T\B^{s+1}_{p,1}},\quad
 \norm{q}{L^1_T\B^s_{p,1}},\quad
 \norm{\partial_tv}{L^1_T\B^{s-1}_{p,1}}
 \longrightarrow0
\]
as $T\downarrow0$, uniformly for convergent data.  Interpolation gives the same conclusion for the $L^2_T\B^s_{p,1}$ norm, proving $\eta_T\to0$.

Finally,
\[
 Y(t)=\int_0^t v(\tau)\dd\tau.
\]
Therefore
\[
 \norm{Y}{L^\infty_T\B^{s+1}_{p,1}}
 \le\norm{v}{L^1_T\B^{s+1}_{p,1}},
 \qquad
 \norm{Y}{L^\infty_T\B^s_{p,1}}
 \le T^{1/2}\norm{v}{L^2_T\B^s_{p,1}}.
\]
The derivative equivalence controls $F-I=\D Y$.  To get the short-time
linear bound for the inverse, put $K=A-I$.  Since $(I+K)(I+G)=I$, we have
$K=-G-KG$.  For short time the last term is absorbed in the algebra
$\B^s_{p,1}$, giving the asserted estimate for $A-I$.
\end{proof}

\begin{proposition}[Stability of the Lagrangian internal variable]\label{prop:internal-stability}
For any quantity $f$, write $\delta f=f_1-f_2$, and define
\begin{equation}\label{eq:Z-general}
 \cZ_T:=
 \norm{\delta v}{L^\infty_T\B^{s-1}_{p,1}}
 +\norm{\partial_t\delta v}{L^1_T\B^{s-1}_{p,1}}
 +\norm{\nabla^2\delta v}{L^1_T\B^{s-1}_{p,1}}
 +\norm{\nabla\delta q}{L^1_T\B^{s-1}_{p,1}}.
\end{equation}
On a common interval on which the solution norms are bounded by a fixed constant $\cM$, one has
\begin{equation}\label{eq:R-difference}
 \norm{\delta R}{L^\infty_T\B^s_{p,1}}
 \le C_{\cM}\left(
  \norm{\delta r_0}{\B^s_{p,1}}+\cZ_T\right).
\end{equation}
\end{proposition}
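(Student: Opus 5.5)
The plan is to subtract the two Lagrangian ODEs \eqref{eq:internal-ode}, bound the difference in the algebra $\B^s_{p,1}$ using Lemma~\ref{lem:constitutive-tame}, and then control $\delta M$ by $\cZ_T$. Integrating in time gives
\[
 \delta R(t)=\delta r_0+\int_0^t\bigl(\mathscr G(R_1,M_1)-\mathscr G(R_2,M_2)\bigr)\dd\tau .
\]
This is legitimate in $\B^s_{p,1}$ because $R_i\in C_T\B^s_{p,1}$ and $\partial_tR_i\in L^1_T\B^s_{p,1}$ by Lemma~\ref{lem:lag-regularity}. The bound $\|R_i\|_{L^\infty_T\B^s_{p,1}}\le C_\cM$ comes from composition at index $s$, so \eqref{eq:G-tame} applies with $R=C_\cM$. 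It yields
\[
 \|\delta R(t)\|_{\B^s_{p,1}}\le\|\delta r_0\|_{\B^s_{p,1}}
 +C_\cM\int_0^t\bigl(1+\|M_1\|_{\B^s_{p,1}}+\|M_2\|_{\B^s_{p,1}}\bigr)\|\delta R\|_{\B^s_{p,1}}\dd\tau
 +C_\cM\|\delta M\|_{L^1_t\B^s_{p,1}}.
\]
Since $\|M_i\|_{L^1_T\B^s_{p,1}}\le C_X\|v_i\|_{L^1_T\B^{s+1}_{p,1}}\le C_\cM$ (proof of Lemma~\ref{lem:lag-regularity}), Gronwall's inequality reduces \eqref{eq:R-difference} to the single estimate $\|\delta M\|_{L^1_T\B^s_{p,1}}\le C_\cM(\cZ_T+\text{absorbable terms})$.

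For $\delta M$, I expand $M_i=\nabla v_iA_i=\nabla v_i+\nabla v_i(A_i-I)$. This gives
\[
 \delta M=\nabla\delta v\,A_1+\nabla v_2\,\delta A ,
\]
and each piece is treated separately.

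\emph{First piece.} The algebra property and the bound on $A_1-I$ give $\|\nabla\delta v\,A_1\|_{L^1_T\B^s_{p,1}}\le C_\cM\|\delta v\|_{L^1_T\B^{s+1}_{p,1}}$. By the derivative equivalence, $\|\delta v\|_{L^1_T\B^{s+1}_{p,1}}\simeq\|\nabla^2\delta v\|_{L^1_T\B^{s-1}_{p,1}}\le\cZ_T$.

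\emph{Second piece.} Here $\|\nabla v_2\,\delta A\|_{L^1_T\B^s_{p,1}}\le\|v_2\|_{L^1_T\B^{s+1}_{p,1}}\|\delta A\|_{L^\infty_T\B^s_{p,1}}$. Since $A_i-I$ is a polynomial without constant term in $G_i=\D Y_i$ (cofactor formula, Proposition~\ref{prop:critical-flow-jacobian}), Lemma~\ref{lem:standard-besov}(iv) gives $\|\delta A\|_{\B^s_{p,1}}\le C_\cM\|\delta G\|_{\B^s_{p,1}}$. Moreover $\delta G=\D\delta Y$ with $\delta Y(t)=\int_0^t\delta v$, so
\[
 \|\delta A\|_{L^\infty_T\B^s_{p,1}}\le C_\cM\|\delta v\|_{L^1_T\B^{s+1}_{p,1}}\le C_\cM\cZ_T .
\]
Combining the two pieces, $\|\delta M\|_{L^1_T\B^s_{p,1}}\le C_\cM\cZ_T$, and the Gronwall step concludes. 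No smallness in $T$ is needed, only the uniform bound $\cM$.

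The main obstacle I expect is rigor rather than algebra. The identity $\delta Y=\int\delta v$ and the ODE for $R_i$ are first established for smooth approximations. They must then be passed to the critical class using Proposition~\ref{prop:critical-flow-jacobian}, which gives $Y_n\to Y$ in $C_T\B^{s+1}_{p,1}$, and Lemma~\ref{lem:composition}. One must also check that $M_i\in L^1_T\B^s_{p,1}$, so that the tame bound applies pointwise in time with an integrable weight. Both points follow from Lemma~\ref{lem:lag-regularity}. Finally, all products involving $A_i$ should be expanded in the affine convention, so that no constant matrix is assigned a homogeneous norm.
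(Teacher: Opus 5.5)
Your proposal is correct and follows the paper's proof essentially step for step. You bound $\delta Y$, $\delta F$ and $\delta A$ by $\|\delta v\|_{L^1_T\dot B^{s+1}_{p,1}}\lesssim\mathcal Z_T$, split $\delta M=(\nabla\delta v)A_1+(\nabla v_2)\delta A$, then subtract the Lagrangian ODEs and close with the tame bound \eqref{eq:G-tame} and Gronwall. The only cosmetic difference is that you control $\delta A$ through the cofactor polynomial and Lemma~\ref{lem:standard-besov}(iv), whereas the paper uses the identity $A_1-A_2=A_1(F_2-F_1)A_2$ with the algebra property; both work equally well.
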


\begin{proof}
The derivative equivalence gives
\[
 \norm{\delta v}{L^1_T\B^{s+1}_{p,1}}
 \lesssim\norm{\nabla^2\delta v}{L^1_T\B^{s-1}_{p,1}},
 \qquad
 \norm{\delta q}{L^1_T\B^s_{p,1}}
 \lesssim\norm{\nabla\delta q}{L^1_T\B^{s-1}_{p,1}}.
\]
Interpolating the low and high velocity norms yields
\begin{equation}\label{eq:interp-dv-general}
 \norm{\delta v}{L^2_T\B^s_{p,1}}
 +\norm{\delta v}{L^1_T\B^{s+1}_{p,1}}
 +\norm{\delta q}{L^1_T\B^s_{p,1}}
 \le C\cZ_T.
\end{equation}
Since $\delta Y(t)=\int_0^t\delta v(\tau)\dd\tau$,
\[
 \norm{\delta Y}{L^\infty_T\B^{s+1}_{p,1}}
 \le\norm{\delta v}{L^1_T\B^{s+1}_{p,1}},
 \qquad
 \norm{\delta Y}{L^\infty_T\B^s_{p,1}}
 \le T^{1/2}\norm{\delta v}{L^2_T\B^s_{p,1}}.
\]
Thus $\delta F=\D\delta Y$ is controlled in $L^\infty_T\B^s_{p,1}$.  The inverse-matrix identity
\[
 A_1-A_2=A_1(F_2-F_1)A_2
\]
and the algebra property give
\begin{align}
 &\norm{\delta Y}{L^\infty_T\B^{s+1}_{p,1}}
 +\norm{\delta F}{L^\infty_T\B^s_{p,1}}
 +\norm{\delta A}{L^\infty_T\B^s_{p,1}}
 \le C_{\cM}\cZ_T,\label{eq:dY-high-general}\\
 &\norm{\delta Y}{L^\infty_T\B^s_{p,1}}
 \le C T^{1/2}\cZ_T.\label{eq:dY-low-general}
\end{align}

Next,
\[
 \delta M=(\nabla\delta v)A_1+(\nabla v_2)\delta A.
\]
Hence
\begin{align*}
 \norm{\delta M}{L^1_T\B^s_{p,1}}
 &\le C_{\cM}\norm{\delta v}{L^1_T\B^{s+1}_{p,1}}
 +C\norm{\nabla v_2}{L^1_T\B^s_{p,1}}
       \norm{\delta A}{L^\infty_T\B^s_{p,1}}\\
 &\le C_{\cM}\cZ_T.
\end{align*}
Subtracting \eqref{eq:internal-ode} and using \eqref{eq:G-tame}, we obtain for almost every $t$
\begin{align*}
 \norm{\partial_t\delta R(t)}{\B^s_{p,1}}
 \le C_{\cM}\Bigl[&
 (1+\norm{M_1(t)}{\B^s_{p,1}}+
       \norm{M_2(t)}{\B^s_{p,1}})
       \norm{\delta R(t)}{\B^s_{p,1}}\\
 &+\norm{\delta M(t)}{\B^s_{p,1}}\Bigr].
\end{align*}
Since $R_i$ is absolutely continuous as a $\B^s_{p,1}$-valued
function, the preceding differential inequality implies, for every
$t\in[0,T]$,
\begin{align*}
 \norm{\delta R(t)}{\B^s_{p,1}}
 &\le \norm{\delta r_0}{\B^s_{p,1}}
 +C_{\cM}\int_0^t
 (1+\norm{M_1(\tau)}{\B^s_{p,1}}
     +\norm{M_2(\tau)}{\B^s_{p,1}})
     \norm{\delta R(\tau)}{\B^s_{p,1}}\,\dd\tau\\
 &\qquad
 +C_{\cM}\int_0^t\norm{\delta M(\tau)}{\B^s_{p,1}}\,\dd\tau .
\end{align*}
Since $M_i\in L^1_T\B^s_{p,1}$, Gronwall's lemma gives
\[
 \norm{\delta R}{L^\infty_T\B^s_{p,1}}
 \le C_{\cM}\left(
 \norm{\delta r_0}{\B^s_{p,1}}
 +\norm{\delta M}{L^1_T\B^s_{p,1}}\right),
\]
and the preceding bound for $\delta M$ yields \eqref{eq:R-difference}.
\end{proof}

\section{Piola difference estimates and abstract stability}\label{sec:stability}

Throughout this section, $\delta f=f_1-f_2$.  To avoid ambiguity with the row-wise divergence convention, we write
\[
 B\diamond\nabla w:=\sum_{\alpha,\beta}B_{\alpha\beta}\partial_\alpha w_\beta
\]
for the Lagrangian divergence contraction of a matrix field $B$ with the gradient of a vector field $w$.

Take $T$ so small that
\begin{equation}\label{eq:small-choice}
 \max_i\norm{Y_i}{L^\infty_T\B^{s+1}_{p,1}}
 \le\frac{\varepsilon_0}{2}.
\end{equation}
Corollary~\ref{cor:flow-piola} gives unique fields $z_i$ such that
\begin{equation}\label{eq:z-def-rigorous}
 \diver z_i=0,
 \qquad v_i=z_i+(z_i\cdot\nabla)Y_i.
\end{equation}
They satisfy
\begin{align}
 \norm{z_i}{L^\infty_T\B^{s-1}_{p,1}}&\le C_{\cM},\label{eq:z-low}\\
 \norm{z_i}{L^2_T\B^s_{p,1}}&\lesssim\eta_T,\label{eq:z-mid}\\
 \norm{\partial_tz_i}{L^1_T\B^{s-1}_{p,1}}&\lesssim\eta_T.\label{eq:zt-est}
\end{align}
Indeed, the first two bounds follow from the closed Piola graph.  Lemma~\ref{lem:time-diff} applied to \eqref{eq:z-def-rigorous}, with $\partial_tY_i=v_i$, gives more precisely
\[
 \|\partial_tz_i\|_{L^1_T\dot B^{s-1}_{p,1}}
 \lesssim \|\partial_tv_i\|_{L^1_T\dot B^{s-1}_{p,1}}
 +\|z_i\|_{L^2_T\dot B^s_{p,1}}\|v_i\|_{L^2_T\dot B^s_{p,1}}
 \lesssim \eta_T+\eta_T^2,
\]
and this is bounded by $C\eta_T$ after reducing $T$ so that $\eta_T\le1$.

\begin{proposition}[Difference estimates for the Piola velocities]\label{prop:diff-piola}
The identities
\begin{equation}\label{eq:dz-identity}
 \delta v=\delta z+(\delta z\cdot\nabla)Y_1
 +(z_2\cdot\nabla)\delta Y
\end{equation}
and
\begin{align}
 \partial_t\delta v={}&\partial_t\delta z
 +(\partial_t\delta z\cdot\nabla)Y_1
 +(\partial_tz_2\cdot\nabla)\delta Y\notag\\
 &+(\delta z\cdot\nabla)v_1
 +(z_2\cdot\nabla)\delta v
 \label{eq:dzt-identity}
\end{align}
hold in the critical spaces.  Moreover,
\begin{align}
 \norm{\delta z}{L^\infty_T\B^{s-1}_{p,1}}
 +\norm{\delta z}{L^2_T\B^s_{p,1}}
 &\le C_{\cM}\cZ_T,\label{eq:dz-est}\\
 \norm{\partial_t\delta z}{L^1_T\B^{s-1}_{p,1}}
 &\le C_{\cM}\cZ_T.\label{eq:dzt-est}
\end{align}
\end{proposition}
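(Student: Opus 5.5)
The identities come from subtracting the two graph relations \eqref{eq:z-def-rigorous}, and the bounds from coercivity (Lemma~\ref{lem:coercive}) together with the compensated product (Lemma~\ref{lem:null}).

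\emph{Step 1: the identity \eqref{eq:dz-identity}.} Write
\[
 (z_1\cdot\nabla)Y_1-(z_2\cdot\nabla)Y_2=(\delta z\cdot\nabla)Y_1+(z_2\cdot\nabla)\delta Y .
\]
This is bilinear algebra for the compensated products. Both $z_1$ and $\delta z$ are solenoidal, so every product lies in the class of Lemma~\ref{lem:null}. The identity holds for smooth approximations and passes to the limit by the continuity of the compensated product.

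\emph{Step 2: the identity \eqref{eq:dzt-identity}.} Lemma~\ref{lem:time-diff} gives $z_i\in W^{1,1}(0,T;E_\sigma)$ with $\partial_tz_i$ solenoidal. Lemma~\ref{lem:lag-regularity} gives $Y_i\in W^{1,1}_T\B^{s+1}_{p,1}$ with $\partial_tY_i=v_i\in L^2_T\B^s_{p,1}$. So Lemma~\ref{lem:compensated-product-rule} applies to $(\delta z\cdot\nabla)Y_1$ and to $(z_2\cdot\nabla)\delta Y$, with $\partial_t\delta Y=\delta v$. Differentiating \eqref{eq:dz-identity} then yields \eqref{eq:dzt-identity}.

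\emph{Step 3: the bound \eqref{eq:dz-est}.} Rewrite \eqref{eq:dz-identity} as
\[
 L_{Y_1}\delta z=\delta v-(z_2\cdot\nabla)\delta Y .
\]
Apply the Bochner-in-time coercivity used in the proof of Proposition~\ref{prop:closed-piola}.

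In $L^\infty_T\B^{s-1}_{p,1}$, \eqref{eq:null-low} gives
\[
 \|(z_2\cdot\nabla)\delta Y\|\lesssim\|z_2\|_{L^\infty_T\B^{s-1}_{p,1}}\|\delta Y\|_{L^\infty_T\B^{s+1}_{p,1}}\le C_\cM\cZ_T,
\]
using \eqref{eq:z-low} and \eqref{eq:dY-high-general}.

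In $L^2_T\B^s_{p,1}$, the algebra estimate gives
\[
 \|(z_2\cdot\nabla)\delta Y\|_{L^2_T\B^s_{p,1}}\lesssim\|z_2\|_{L^2_T\B^s_{p,1}}\|\delta Y\|_{L^\infty_T\B^{s+1}_{p,1}},
\]
which is controlled the same way via \eqref{eq:z-mid}.

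Finally, $\|\delta v\|$ in both norms is at most $C\cZ_T$ by the definition of $\cZ_T$ and \eqref{eq:interp-dv-general}.

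\emph{Step 4: the bound \eqref{eq:dzt-est}.} Isolate $L_{Y_1}\partial_t\delta z$ from \eqref{eq:dzt-identity} and apply coercivity in $L^1_T\B^{s-1}_{p,1}$. The terms are handled as follows.
\begin{itemize}
\item $\partial_t\delta v$ is part of $\cZ_T$.
\item $(\partial_tz_2\cdot\nabla)\delta Y$ is bounded through \eqref{eq:null-time-1} by $\|\partial_tz_2\|_{L^1_T\B^{s-1}_{p,1}}\|\delta Y\|_{L^\infty_T\B^{s+1}_{p,1}}\lesssim\eta_T C_\cM\cZ_T$, using \eqref{eq:zt-est}.
\item $(\delta z\cdot\nabla)v_1$ is bounded through \eqref{eq:null-time-2} by $\|\delta z\|_{L^2_T\B^s_{p,1}}\|v_1\|_{L^2_T\B^s_{p,1}}$, then by Step 3.
\item $(z_2\cdot\nabla)\delta v$ is bounded through \eqref{eq:null-time-2} by $\|z_2\|_{L^2_T\B^s_{p,1}}\|\delta v\|_{L^2_T\B^s_{p,1}}$, then by \eqref{eq:interp-dv-general}.
\end{itemize}

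\emph{Main obstacle.} The delicate point is Step 2, namely justifying differentiation of the product $(z_2\cdot\nabla)\delta Y$. Its first factor is only in $\B^{s-1}_{p,1}$, so the product rule must be the compensated one. This is exactly where solenoidality of $\partial_tz_2$, secured by Lemma~\ref{lem:time-diff}, is needed, together with $\partial_t\delta Y=\delta v\in L^2_T\B^s_{p,1}$. Without it, the generic product failure of Proposition~\ref{prop:generic-failure} would obstruct the estimate.
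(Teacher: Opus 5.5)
Your proposal is correct and follows the paper's proof: you obtain \eqref{eq:dz-identity} by subtraction, the bound \eqref{eq:dz-est} from coercivity of $L_{Y_1}$ with the low and middle compensated bounds, and the time-derivative bound from the compensated product rule. The one difference is organizational. You take $\delta z\in W^{1,1}(0,T;E_\sigma)$ from the separate applications of Lemma~\ref{lem:time-diff} to $z_1$ and $z_2$, and then differentiate both compensated terms with Lemma~\ref{lem:compensated-product-rule}. The paper instead differentiates only $H=(z_2\cdot\nabla)\delta Y$ and applies Lemma~\ref{lem:time-diff} to the graph $L_{Y_1}\delta z=\delta v-H$. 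Both routes produce the same four terms and the same estimate; yours simply avoids repeating the BV argument.
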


\begin{proof}
Subtract \eqref{eq:z-def-rigorous} to obtain \eqref{eq:dz-identity}.  Put
$H=(z_2\cdot\nabla)\delta Y$.  Since
$\delta v-H=L_{Y_1}\delta z$, coercivity gives the low and middle bounds for $\delta z$ in terms of $\delta v$ and $H$.  At low regularity, Lemma~\ref{lem:null} gives
\[
 \|H\|_{L^\infty_T\dot B^{s-1}_{p,1}}
 \lesssim
 \|z_2\|_{L^\infty_T\dot B^{s-1}_{p,1}}
 \|\delta Y\|_{L^\infty_T\dot B^{s+1}_{p,1}}.
\]
For the $L^2_T\dot B^s_{p,1}$ estimate one uses the algebra property, rather than the low-output compensated estimate:
\[
 \|H\|_{L^2_T\dot B^s_{p,1}}
 \lesssim
 \|z_2\|_{L^2_T\dot B^s_{p,1}}
 \|\delta Y\|_{L^\infty_T\dot B^{s+1}_{p,1}}.
\]
Together with \eqref{eq:z-low}--\eqref{eq:z-mid} and \eqref{eq:dY-high-general}, these estimates yield \eqref{eq:dz-est}.  The hypotheses of
Lemma~\ref{lem:compensated-product-rule} are satisfied with
$a=z_2$ and $Y=\delta Y$, because
$z_2\in W^{1,1}_T\dot B^{s-1}_{p,1}\cap L^2_T\dot B^s_{p,1}$,
$\delta Y\in W^{1,1}_T\dot B^{s+1}_{p,1}$, and
$\partial_t\delta Y=\delta v\in L^2_T\dot B^s_{p,1}$.  Hence
$H\in W^{1,1}_T\dot B^{s-1}_{p,1}$ and
\[
 \partial_tH=(\partial_tz_2\cdot\nabla)\delta Y
 +(z_2\cdot\nabla)\delta v.
\]
The low and middle compensated estimates yield
\[
 \norm{\partial_tH}{L^1_T\B^{s-1}_{p,1}}
 \le C_{\cM}\eta_T\cZ_T.
\]
Since $\delta v-H=L_{Y_1}\delta z$, Lemma~\ref{lem:time-diff} gives \eqref{eq:dzt-est}.  Formula \eqref{eq:dzt-identity} is the resulting almost-everywhere derivative of \eqref{eq:dz-identity}.
\end{proof}

\begin{proposition}[Stokes estimate with nonzero divergence]\label{prop:stokes}
Let $1<p<\infty$, $\rho\in\R$, and $\nu>0$.  Suppose
\begin{equation}\label{eq:abstract-stokes}
 \partial_tu-\nu\Delta u+\nabla\pi=\diver F,
 \qquad \diver u=g,
 \qquad u|_{t=0}=u_0,
\end{equation}
where $\diver F$ is understood row-wise and
\[
 F\in L^1_T\B^{\rho+1}_{p,1},\qquad
 \mathcal R,\nabla g\in L^1_T\B^\rho_{p,1},\qquad
 \partial_tg=\diver \mathcal R,
 \qquad g(0)=\diver u_0.
\]
Then
\begin{align}
 &\norm{u}{L^\infty_T\B^\rho_{p,1}}
 +\norm{\partial_tu}{L^1_T\B^\rho_{p,1}}
 +\nu\norm{\nabla^2u}{L^1_T\B^\rho_{p,1}}
 +\norm{\nabla\pi}{L^1_T\B^\rho_{p,1}}\notag\\
 &\qquad\le C_{p,d,\nu}\left(
 \norm{u_0}{\B^\rho_{p,1}}
 +\norm{F}{L^1_T\B^{\rho+1}_{p,1}}
 +\norm{\mathcal R}{L^1_T\B^\rho_{p,1}}
 +\norm{\nabla g}{L^1_T\B^\rho_{p,1}}\right).
 \label{eq:stokes-est}
\end{align}
\end{proposition}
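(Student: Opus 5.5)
The plan is to reduce to the divergence-free case by subtracting a gradient corrector built from $g$. I would apply the Leray decomposition $u=\PP u+\QQ u$, where $\QQ=\nabla\Delta^{-1}\diver$. Then
\[
 \QQ u=\nabla\Delta^{-1}g=:w .
\]
This is a zero-order multiplier applied to $\nabla g$, so
\[
 \norm{w}{L^\infty_T\B^\rho_{p,1}}\lesssim\norm{\QQ u}{L^\infty_T\B^\rho_{p,1}}
\]
is understood through $\mathcal S'_h$. The time derivative is
\[
 \partial_tw=\nabla\Delta^{-1}\diver\mathcal R=\QQ\mathcal R,
\]
hence $\norm{\partial_tw}{L^1_T\B^\rho_{p,1}}\lesssim\norm{\mathcal R}{L^1_T\B^\rho_{p,1}}$. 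Also $\nabla^2w=\nabla^2\Delta^{-1}\nabla g$ is a homogeneous zero-order multiplier of $\nabla g$, so $\norm{\nabla^2w}{L^1_T\B^\rho_{p,1}}\lesssim\norm{\nabla g}{L^1_T\B^\rho_{p,1}}$. The $L^\infty_T$ bound for $w$ comes from $w(t)=\QQ u_0+\int_0^t\QQ\mathcal R$, using $g(0)=\diver u_0$ to identify $w(0)=\QQ u_0$.

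Next I would handle the solenoidal part. Applying $\PP$ to \eqref{eq:abstract-stokes} kills the pressure, so $U:=\PP u$ solves the heat equation
\[
 \partial_tU-\nu\Delta U=\PP\diver F,\qquad U(0)=\PP u_0 .
\]
Dyadic localization with the Bernstein-type heat decay $\norm{e^{\nu t\Delta}\dot\Delta_j f}{L^p}\lesssim e^{-c\nu t2^{2j}}\norm{\dot\Delta_jf}{L^p}$, summed with weights $2^{j\rho}$ in $\ell^1$, gives the maximal regularity bound. It controls $\norm{U}{L^\infty_T\B^\rho_{p,1}}$ and $\nu\norm{\nabla^2U}{L^1_T\B^\rho_{p,1}}$ by $\norm{u_0}{\B^\rho_{p,1}}+\norm{\diver F}{L^1_T\B^\rho_{p,1}}$, which is $\lesssim\norm{F}{L^1_T\B^{\rho+1}_{p,1}}$ by Lemma~\ref{lem:standard-besov}(ii). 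The time derivative $\partial_tU$ is then read off from the equation. Here the $\ell^1$ index matters: each block gives an $L^1_t$ bound uniform in $j$, and summation is direct, with no Chemin--Lerner loss.

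Finally I would recover the pressure. The gradient part of the equation reads
\[
 \nabla\pi=\QQ\diver F-\partial_tw+\nu\Delta w .
\]
Each term is already controlled: $\nu\Delta w=\nu\nabla\Delta^{-1}\nabla\cdot\nabla g$-type multipliers of $\nabla g$. This yields the $\nabla\pi$ bound, and summing the pieces gives \eqref{eq:stokes-est}.

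The main obstacle is the bookkeeping in homogeneous spaces. I need the identification $\QQ u=\nabla\Delta^{-1}g$ to hold in $\mathcal S'_h$ with no polynomial ambiguity, and the compatibility $\partial_tg=\diver\mathcal R$ must integrate correctly in time. I would first prove everything for frequency-truncated data, where all multipliers are bounded and the identities are exact. I would then pass to the limit using the $\ell^1$ summability and the Fatou property of Lemma~\ref{lem:bochner-besov-fatou}, with uniform integrability in the $q=1$ components supplied by the strong $L^1_T$ convergence of the truncated forcing.
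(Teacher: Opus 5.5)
Your proposal is correct and follows the paper's proof essentially step for step. Both arguments use the Leray splitting, heat maximal regularity for $\PP u$, and the explicit potential part $\QQ u=\nabla\Delta^{-1}g$ with $\partial_t\QQ u=\QQ\mathcal R$ and $\nabla^2\QQ u$ controlled by $\nabla g$, and both recover the pressure from $\nabla\pi=\QQ\diver F-\partial_t\QQ u+\nu\Delta\QQ u$. One small slip: $w=\nabla\Delta^{-1}g$ itself is not a zero-order multiplier of $\nabla g$; only $\nabla^2 w$ is. This does no harm, since your actual $L^\infty_T$ bound comes from $w(t)=\QQ u_0+\int_0^t\QQ\mathcal R$, exactly as in the paper.
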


\begin{proof}
Let $\PP$ be the Leray projector and $\QQ=I-\PP$.  Applying $\PP$ to \eqref{eq:abstract-stokes} gives
\[
 \partial_t\PP u-\nu\Delta\PP u=\PP\diver F,
 \qquad \PP u|_{t=0}=\PP u_0.
\]
The homogeneous heat maximal-regularity estimate yields
\begin{align*}
 &\|\PP u\|_{L^\infty_T\dot B^\rho_{p,1}}
 +\|\partial_t\PP u\|_{L^1_T\dot B^\rho_{p,1}}
 +\nu\|\nabla^2\PP u\|_{L^1_T\dot B^\rho_{p,1}}\\
 &\qquad\lesssim_{p,d,\nu}
 \|u_0\|_{\dot B^\rho_{p,1}}
 +\|F\|_{L^1_T\dot B^{\rho+1}_{p,1}}.
\end{align*}

The potential part is determined by the divergence:
\[
 \QQ u=\nabla\Delta^{-1}g.
\]
Since $g(0)=\diver u_0$ and
$g(t)=g(0)+\int_0^t\diver\mathcal R(\tau)\,d\tau$, Fourier multiplier estimates give
\[
 \|\QQ u\|_{L^\infty_T\dot B^\rho_{p,1}}
 \lesssim
 \|u_0\|_{\dot B^\rho_{p,1}}
 +\|\mathcal R\|_{L^1_T\dot B^\rho_{p,1}}.
\]
Moreover,
\[
 \partial_t\QQ u
 =\nabla\Delta^{-1}\diver\mathcal R,
\]
so
\[
 \|\partial_t\QQ u\|_{L^1_T\dot B^\rho_{p,1}}
 \lesssim\|\mathcal R\|_{L^1_T\dot B^\rho_{p,1}}.
\]
Finally, $\nabla^2\QQ u$ is a Fourier multiplier of order one applied to $g$; hence
\[
 \|\nabla^2\QQ u\|_{L^1_T\dot B^\rho_{p,1}}
 \lesssim\|\nabla g\|_{L^1_T\dot B^\rho_{p,1}}.
\]

Applying $\QQ$ to the momentum equation and using
$\QQ\nabla\pi=\nabla\pi$ gives
\[
 \nabla\pi
 =\QQ\diver F-\partial_t\QQ u+\nu\Delta\QQ u.
\]
The first term is controlled by
$\|F\|_{\dot B^{\rho+1}_{p,1}}$, and the other two terms have just been estimated.  Adding the solenoidal and potential bounds proves \eqref{eq:stokes-est}.  The compatibility condition at $t=0$ is exactly what identifies the initial potential part with $\nabla\Delta^{-1}g(0)$.
\end{proof}

\begin{proposition}[Control of the nonzero divergence]\label{prop:nonzero-divergence}
Set $g=\diver\delta v$.  Then
\begin{equation}\label{eq:grad-g}
 \norm{\nabla g}{L^1_T\B^{s-1}_{p,1}}
 \le C_{\cM}\eta_T\cZ_T.
\end{equation}
Moreover, $\partial_tg=\diver R_g$, where
\begin{align}
 R_g={}&(\partial_t\delta z\cdot\nabla)Y_1
 +(\delta z\cdot\nabla)v_1
 +(\partial_tz_2\cdot\nabla)\delta Y
 +(z_2\cdot\nabla)\delta v,
 \label{eq:Rg}
\end{align}
and
\begin{equation}\label{eq:Rg-est}
 \norm{R_g}{L^1_T\B^{s-1}_{p,1}}
 \le C_{\cM}\eta_T\cZ_T.
\end{equation}
\end{proposition}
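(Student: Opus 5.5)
Take the divergence of the identity \eqref{eq:dz-identity}. Because $\delta z$ and $z_2$ are solenoidal, the compensated rewriting of Lemma~\ref{lem:null} gives, componentwise,
\[
 (\delta z\cdot\nabla)Y_1=\diver\bigl(Y_1\otimes\delta z\bigr),
 \qquad
 \diver\bigl[(\delta z\cdot\nabla)Y_1\bigr]
 =\partial_\alpha\bigl(\delta z_k\,\partial_k Y_{1,\alpha}\bigr)
 =\partial_k\partial_\alpha\bigl(\delta z_k Y_{1,\alpha}\bigr).
\]
The cleanest route to \eqref{eq:grad-g}, however, is to note that $g=\diver(\delta z\cdot\nabla Y_1)+\diver(z_2\cdot\nabla\delta Y)$, since $\diver\delta z=0$. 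Hence
\[
 \|\nabla g\|_{L^1_T\B^{s-1}_{p,1}}\lesssim
 \|(\delta z\cdot\nabla)Y_1\|_{L^1_T\B^{s+1}_{p,1}}
 +\|(z_2\cdot\nabla)\delta Y\|_{L^1_T\B^{s+1}_{p,1}}.
\]
These terms are too high for the data, so I will instead use the Lagrangian incompressibility. Subtracting \eqref{eq:lag-div-general} for $i=1,2$ gives
\[
 g=\diver\delta v=-(A_1-I)\diamond\nabla\delta v-\delta A\diamond\nabla v_2 .
\]
Then $\nabla g$ involves only products of $\B^s_{p,1}$ functions with $\nabla^2\delta v$, $\nabla^2 v_2$, or of $\nabla A$ with $\nabla\delta v$. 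I will estimate $g$ directly in $L^1_T\B^s_{p,1}$ by the algebra property:
\[
 \|g\|_{L^1_T\B^s_{p,1}}\lesssim \|A_1-I\|_{L^\infty_T\B^s_{p,1}}\|\delta v\|_{L^1_T\B^{s+1}_{p,1}}
 +\|\delta A\|_{L^\infty_T\B^s_{p,1}}\|v_2\|_{L^1_T\B^{s+1}_{p,1}}.
\]
By \eqref{eq:flow-small}, \eqref{eq:interp-dv-general}, \eqref{eq:dY-high-general} and \eqref{eq:eta-general}, this is $\lesssim C_\cM\eta_T\cZ_T$. Since $\|\nabla g\|_{\B^{s-1}_{p,1}}\simeq\|g\|_{\B^s_{p,1}}$ by Lemma~\ref{lem:standard-besov}(ii), \eqref{eq:grad-g} follows. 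This uses only genuine $\B^s_{p,1}$ products, and the identity for $g$ passes to the critical class by smooth approximation together with Proposition~\ref{prop:critical-flow-jacobian}.

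For the time derivative I will use the Piola structure rather than the $A$-form. Since $\diver\delta z=0$, \eqref{eq:dz-identity} gives $g=\diver[(\delta z\cdot\nabla)Y_1+(z_2\cdot\nabla)\delta Y]$ as distributions. Differentiating with Lemma~\ref{lem:compensated-product-rule}, once with $a=\delta z$, $Y=Y_1$, $\partial_tY_1=v_1$, and once with $a=z_2$, $Y=\delta Y$, $\partial_t\delta Y=\delta v$, yields $\partial_tg=\diver R_g$ with $R_g$ exactly \eqref{eq:Rg}. The hypotheses hold: $\delta z,z_2\in W^{1,1}_T\B^{s-1}_{p,1}\cap L^2_T\B^s_{p,1}$ are solenoidal with solenoidal time derivatives, by Lemma~\ref{lem:time-diff} and Proposition~\ref{prop:diff-piola}. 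The divergence commutes with the $W^{1,1}$-in-time derivative in $\B^{s-2}_{p,1}$.

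Each term of $R_g$ is then bounded by the compensated estimates \eqref{eq:null-time-1}--\eqref{eq:null-time-2}:
\begin{align*}
 \|(\partial_t\delta z\cdot\nabla)Y_1\|&\lesssim\|\partial_t\delta z\|_{L^1_T\B^{s-1}_{p,1}}\|Y_1\|_{L^\infty_T\B^{s+1}_{p,1}}\lesssim C_\cM\cZ_T\,\eta_T,\\
 \|(\delta z\cdot\nabla)v_1\|&\lesssim\|\delta z\|_{L^2_T\B^s_{p,1}}\|v_1\|_{L^2_T\B^s_{p,1}}\lesssim C_\cM\cZ_T\,\eta_T,\\
 \|(\partial_tz_2\cdot\nabla)\delta Y\|&\lesssim\|\partial_tz_2\|_{L^1_T\B^{s-1}_{p,1}}\|\delta Y\|_{L^\infty_T\B^{s+1}_{p,1}}\lesssim\eta_T\,C_\cM\cZ_T,\\
 \|(z_2\cdot\nabla)\delta v\|&\lesssim\|z_2\|_{L^2_T\B^s_{p,1}}\|\delta v\|_{L^2_T\B^s_{p,1}}\lesssim\eta_T\,C_\cM\cZ_T,
\end{align*}
all norms on the left being in $L^1_T\B^{s-1}_{p,1}$. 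These use \eqref{eq:flow-small}, \eqref{eq:z-mid}, \eqref{eq:zt-est}, \eqref{eq:dz-est}, \eqref{eq:dzt-est}, \eqref{eq:dY-high-general} and \eqref{eq:interp-dv-general}.

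The main obstacle is the first term. It has only one factor small in $\eta_T$, namely $Y_1$, and the solenoidality of $\partial_t\delta z$ must be justified for the low compensated estimate to apply. This comes from the solenoidal domain in Lemma~\ref{lem:time-diff}. A secondary point to check is that \eqref{eq:dzt-est} itself was derived without circular use of \eqref{eq:Rg-est}.
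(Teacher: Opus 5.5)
Your proposal is correct and is essentially the paper's proof. You obtain \eqref{eq:grad-g} by bounding $g=(I-A_1)\diamond\nabla\delta v+(A_2-A_1)\diamond\nabla v_2$ in $L^1_T\B^s_{p,1}$ via the algebra property, and $\partial_tg=\diver R_g$ by writing $g=\diver[(\delta z\cdot\nabla)Y_1+(z_2\cdot\nabla)\delta Y]$, applying Lemma~\ref{lem:compensated-product-rule} twice, and bounding the four terms with Lemma~\ref{lem:null}.

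The only difference is immaterial. You bound $(\delta z\cdot\nabla)v_1$ by the middle estimate $\norm{\delta z}{L^2_T\B^s_{p,1}}\norm{v_1}{L^2_T\B^s_{p,1}}$, whereas the paper uses the low estimate $\norm{\delta z}{L^\infty_T\B^{s-1}_{p,1}}\norm{v_1}{L^1_T\B^{s+1}_{p,1}}$. Both bounds are supplied by \eqref{eq:dz-est} and the definition of $\eta_T$.
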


\begin{proof}
From \eqref{eq:lag-div-general},
\begin{equation}\label{eq:g-algebra-general}
 g=(I-A_1)\diamond\nabla\delta v+(A_2-A_1)\diamond\nabla v_2,
\end{equation}
or explicitly
\[
 g=\sum_{\alpha,\beta}(\delta_{\alpha\beta}-A_{1,\alpha\beta})
     \partial_\alpha(\delta v)_\beta
   +\sum_{\alpha,\beta}(A_{2,\alpha\beta}-A_{1,\alpha\beta})
     \partial_\alpha(v_2)_\beta .
\]
The two products on the right belong to $L^1_T\B^s_{p,1}$.  Indeed,
\begin{align*}
 \norm{(I-A_1)\diamond\nabla\delta v}{L^1_T\B^s_{p,1}}
 &\le C\norm{I-A_1}{L^\infty_T\B^s_{p,1}}
       \norm{\delta v}{L^1_T\B^{s+1}_{p,1}},\\
 \norm{(A_2-A_1)\diamond\nabla v_2}{L^1_T\B^s_{p,1}}
 &\le C\norm{A_2-A_1}{L^\infty_T\B^s_{p,1}}
       \norm{v_2}{L^1_T\B^{s+1}_{p,1}}.
\end{align*}
Using \eqref{eq:flow-small}, \eqref{eq:dY-high-general}, and
\eqref{eq:interp-dv-general}, and then one derivative equivalence, gives
\eqref{eq:grad-g}.

Since $\diver\delta z=0$, \eqref{eq:dz-identity} yields
\[
 g=\diver G_g,
 \qquad
 G_g=(\delta z\cdot\nabla)Y_1+(z_2\cdot\nabla)\delta Y.
\]
Lemma~\ref{lem:compensated-product-rule}, applied to both terms in
$G_g$, gives $G_g\in W^{1,1}_T\dot B^{s-1}_{p,1}$ and
$\partial_tG_g=R_g$.  For the four terms of $R_g$, Lemma~\ref{lem:null} gives
\begin{align*}
 \norm{(\partial_t\delta z\cdot\nabla)Y_1}{L^1_T\B^{s-1}_{p,1}}
 &\lesssim
 \norm{\partial_t\delta z}{L^1_T\B^{s-1}_{p,1}}
 \norm{Y_1}{L^\infty_T\B^{s+1}_{p,1}},\\
 \norm{(\delta z\cdot\nabla)v_1}{L^1_T\B^{s-1}_{p,1}}
 &\lesssim
 \norm{\delta z}{L^\infty_T\B^{s-1}_{p,1}}
 \norm{v_1}{L^1_T\B^{s+1}_{p,1}},\\
 \norm{(\partial_tz_2\cdot\nabla)\delta Y}{L^1_T\B^{s-1}_{p,1}}
 &\lesssim
 \norm{\partial_tz_2}{L^1_T\B^{s-1}_{p,1}}
 \norm{\delta Y}{L^\infty_T\B^{s+1}_{p,1}},\\
 \norm{(z_2\cdot\nabla)\delta v}{L^1_T\B^{s-1}_{p,1}}
 &\lesssim
 \norm{z_2}{L^2_T\B^s_{p,1}}
 \norm{\delta v}{L^2_T\B^s_{p,1}}.
\end{align*}
Estimates \eqref{eq:z-mid}, \eqref{eq:zt-est}, \eqref{eq:dz-est},
\eqref{eq:dzt-est}, \eqref{eq:dY-high-general}, and
\eqref{eq:interp-dv-general} give \eqref{eq:Rg-est}.
\end{proof}

\subsection{Difference momentum equation}
Put $C_i=A_iA_i^\top$.  In accordance with \eqref{eq:lag-mom-critical}, the viscous perturbation is written as $\nabla v_i(C_i-I)$ before applying row-wise divergence.  The Piola identity gives
\begin{equation}\label{eq:pressure-piola-general}
 (I-A_i^\top)\nabla q_i
 =\diver\bigl((I-A_i^\top)q_i\bigr).
\end{equation}
Subtracting the rigorous perturbative form \eqref{eq:lag-mom-critical} gives
\begin{equation}\label{eq:stokes-diff-general}
 \partial_t\delta v-\nu\Delta\delta v+\nabla\delta q
 =\diver\cF,
 \qquad \diver\delta v=g,
 \qquad \delta v|_{t=0}=\delta u_0,
\end{equation}
where
\begin{align}
 \cF={}&\nu\nabla\delta v\,(C_1-I)
 +\nu\nabla v_2\,(C_1-C_2)\notag\\
 &+(I-A_1^\top)\delta q+(A_2^\top-A_1^\top)q_2\notag\\
 &+\mathscr S(R_1)A_1^\top-\mathscr S(R_2)A_2^\top.
 \label{eq:Fcal-general}
\end{align}
We estimate every term at the stress regularity $\B^s_{p,1}$.  Since $C_i=A_iA_i^\top$ and $\B^s_{p,1}$ is an algebra,
\[
 \norm{C_1-I}{L^\infty_T\B^s_{p,1}}
 \le C_{\cM}\eta_T,
 \qquad
 \norm{C_1-C_2}{L^\infty_T\B^s_{p,1}}
 \le C_{\cM}\cZ_T.
\]
Consequently,
\begin{align*}
 \norm{\nabla\delta v\,(C_1-I)}{L^1_T\B^s_{p,1}}
 &\le C_{\cM}\eta_T
       \norm{\delta v}{L^1_T\B^{s+1}_{p,1}}
 \le C_{\cM}\eta_T\cZ_T,\\
 \norm{\nabla v_2\,(C_1-C_2)}{L^1_T\B^s_{p,1}}
 &\le C_{\cM}\cZ_T
       \norm{v_2}{L^1_T\B^{s+1}_{p,1}}
 \le C_{\cM}\eta_T\cZ_T.
\end{align*}
The pressure terms obey the same pattern:
\begin{align*}
 \norm{(I-A_1^\top)\delta q}{L^1_T\B^s_{p,1}}
 &\le C_{\cM}\eta_T
       \norm{\delta q}{L^1_T\B^s_{p,1}}
 \le C_{\cM}\eta_T\cZ_T,\\
 \norm{(A_2^\top-A_1^\top)q_2}{L^1_T\B^s_{p,1}}
 &\le C_{\cM}\cZ_T
       \norm{q_2}{L^1_T\B^s_{p,1}}
 \le C_{\cM}\eta_T\cZ_T.
\end{align*}
Thus
\begin{equation}\label{eq:Fcal-geom}
 \norm{\cF_{\rm geom}}{L^1_T\B^s_{p,1}}
 \le C_{\cM}\eta_T\cZ_T.
\end{equation}

For the constitutive stress, split in the affine Besov algebra:
\begin{align*}
 \mathscr S(R_1)A_1^\top-\mathscr S(R_2)A_2^\top
 ={}&\bigl(\mathscr S(R_1)-\mathscr S(R_2)\bigr)
   +\bigl(\mathscr S(R_1)-\mathscr S(R_2)\bigr)(A_1-I)^\top\\
 &+\mathscr S(R_2)(A_1^\top-A_2^\top).
\end{align*}
Thus no homogeneous Besov norm is assigned to the constant identity matrix.  The solution bounds, \eqref{eq:stress-tame}, Proposition~\ref{prop:internal-stability}, and \eqref{eq:dY-high-general} imply
\begin{align}
 &\norm{\mathscr S(R_1)A_1^\top-\mathscr S(R_2)A_2^\top}
 {L^1_T\B^s_{p,1}}\notag\\
 &\qquad\le C_{\cM}T\left(
 \norm{\delta R}{L^\infty_T\B^s_{p,1}}
 +\norm{\delta A}{L^\infty_T\B^s_{p,1}}\right)\notag\\
 &\qquad\le C_{\cM}T\left(
 \norm{\delta r_0}{\B^s_{p,1}}+\cZ_T\right).
 \label{eq:stress-difference}
\end{align}

Applying Proposition~\ref{prop:stokes} with $\rho=s-1$ and using Proposition~\ref{prop:nonzero-divergence} gives
\begin{equation}\label{eq:abstract-closure}
 \cZ_T
 \le C\norm{\delta u_0}{\B^{s-1}_{p,1}}
 +C_{\cM}T\norm{\delta r_0}{\B^s_{p,1}}
 +C_{\cM}(\eta_T+T)\cZ_T.
\end{equation}
Choose $T$ so that $C_{\cM}(\eta_T+T)\le1/2$.  Then
\begin{equation}\label{eq:lag-stability-general}
 \cZ_T
 \le C_{\cM}\left(
 \norm{\delta u_0}{\B^{s-1}_{p,1}}
 +T\norm{\delta r_0}{\B^s_{p,1}}\right),
\end{equation}
and Proposition~\ref{prop:internal-stability} controls $\delta R$.

\subsection{Uniqueness and continuous dependence}
For equal initial data, \eqref{eq:lag-stability-general} gives $\delta v=0$ and hence $\delta Y=0$, $X_1=X_2$.  Equation \eqref{eq:internal-ode} then gives $R_1=R_2$, and composition with the common inverse flow yields equality in Eulerian variables.  Uniqueness on the whole common lifespan follows by resetting the flow at finitely many times; absolute continuity of the integrable norms guarantees the absorption condition on each subinterval.

Let $(u_{0,n},r_{0,n})\to(u_0,r_0)$.  Proposition~\ref{prop:local-input} supplies a common interval and uniform smallness.  Estimate \eqref{eq:lag-stability-general} gives
\[
 v_n\to v\quad\hbox{in }
 C_T\B^{s-1}_{p,1}\cap L^1_T\B^{s+1}_{p,1},
 \qquad
 R_n\to R\quad\hbox{in }C_T\B^s_{p,1},
\]
and $Y_n\to Y$ in $L^\infty_T(\B^s_{p,1}\cap\B^{s+1}_{p,1})$.  Hence $X_n$ and their inverses converge uniformly with uniform Lipschitz bounds.  Lemma~\ref{lem:composition} yields
\[
 u_n=v_n\circ X_n^{-1}\to v\circ X^{-1}=u
 \quad\hbox{in }C_T\B^{s-1}_{p,1},
\]
\[
 r_n=R_n\circ X_n^{-1}\to R\circ X^{-1}=r
 \quad\hbox{in }C_T\B^s_{p,1}.
\]
For the high velocity norm, put $H_n=(\nabla v_n)A_n$ and $H=(\nabla v)A$.  Then $H_n\to H$ in $L^1_T\B^s_{p,1}$ and
$\nabla u_n=H_n\circ X_n^{-1}$.  The $L^1_t$ part of Lemma~\ref{lem:composition}, applied at the index $s$ to the gradients, gives
\[
 H_n\circ X_n^{-1}\longrightarrow H\circ X^{-1}
 \quad\hbox{in }L^1_T\B^s_{p,1}.
\]
Since these are precisely $\nabla u_n$ and $\nabla u$, the derivative equivalence in homogeneous Besov spaces yields
\[
 u_n\to u\quad\hbox{in }L^1_T\B^{s+1}_{p,1}.
\]
It remains only to pass from the short interval to an arbitrary compact subinterval of the lifespan.  Let $[0,T_0]$ be compactly contained in the maximal lifespan of the limiting solution.  By the absolute continuity of the time-integrable norms, choose a finite partition
\[
 0=t_0<t_1<\cdots<t_N=T_0
\]
such that on each subinterval $[t_k,t_{k+1}]$ the corresponding smallness quantity satisfies
\[
 C_{\cM}\bigl(\eta_{[t_k,t_{k+1}]}+(t_{k+1}-t_k)\bigr)\le \frac12 .
\]
Here $\eta_{[a,b]}$ denotes the analogue of \eqref{eq:eta-general} after restarting the construction at time $a$.  The constants depend only on the uniform critical bounds of the limit solution on $[0,T_0]$.  For initial data sufficiently close to $(u_0,r_0)$, Proposition~\ref{prop:local-input} gives a common lifespan and the same smallness on the first interval.  The estimate above gives convergence at time $t_1$; using those traces as new initial data, the same argument applies on $[t_1,t_2]$.  Iterating over the finite partition proves continuity in the full Eulerian phase space on $[0,T_0]$.  This completes the proof of Theorem~\ref{thm:abstract-main}.

\section{Applications}\label{sec:applications}

\subsection{Viscous non-resistive MHD}
Take $\mathcal V=\mathcal W=\mathbb R^d$, $\bar a=0$, and
\[
 \mathbb S(b)=b\otimes b,
 \qquad \mathbb G(b,M)=Mb.
\]
The law is admissible with $g=0$ and $\mathbb L(b)[M]=Mb$.  The abstract system is \eqref{eq:MHD-intro} without imposing the differential magnetic constraint.  That constraint is not part of the finite-dimensional admissible subspace $\mathcal W$; it is propagated by the frozen-in structure and the Piola identity.

In Lagrangian variables $B=b\circ X$ satisfies
\[
 \partial_tB=(\nabla vA)B,\qquad B(0)=b_0.
\]
Since $\partial_tF=(\nabla vA)F$, uniqueness for this matrix ODE gives the frozen-in formula
\[
 B=Fb_0.
\]
If $\operatorname{div}b_0=0$, then the Piola identity applied to $b=Fb_0\circ X^{-1}$ gives $\operatorname{div}b=0$ in distributions for all times; consequently
$\operatorname{div}(b\otimes b)=(b\cdot\nabla)b$.  The transformed magnetic force may therefore be written either as
$\operatorname{div}((B\otimes B)A^\top)$ or, again using the Piola identity and $\operatorname{div}b_0=0$, as
$(b_0\cdot\nabla)(Fb_0)$.  The abstract theorem proves Theorem~\ref{cor:mhd-main}.  The known critical theory for $p\le2d$ then yields Corollary~\ref{cor:mhd-all-p}.

\subsection{Hookean incompressible viscoelasticity}
Write $U=I+H$ and take $\mathcal V=\mathcal W=\mathbb R^{d\times d}$, $\bar a=0$, with
\[
 \mathbb S(H)=(I+H)(I+H)^\top-I,
 \qquad
 \mathbb G(H,M)=M(I+H).
\]
The right-hand sides are interpreted through the perturbation $H\in\dot B^s_{p,1}$:
\[
 \mathbb S(H)=H+H^\top+HH^\top,
 \qquad
 \mathbb G(H,M)=M+MH.
\]
Thus no constant is assigned a homogeneous Besov norm, and the law is admissible.  The analytic local theory is first proved on the full affine Besov space $I+\dot B^s_{p,1}$; the nonlinear physical constraint manifold is not used in the construction.  Theorem~\ref{thm:abstract-main} gives the analytic part of Theorem~\ref{cor:hookean-main}.

It remains to justify the physical constraint statement without assuming that the regularizing sequence preserves the nonlinear constraints.  We use the exact push-forward formula instead.

\begin{proposition}[Propagation of the Hookean compatibility constraints]\label{prop:hookean-constraints}
Let $(u,U)$ be the critical solution given above and write $U=I+H$.  Assume
\[
 \diver U_0^\top=0,\qquad \det U_0=1,
 \qquad [U_0^{(\alpha)},U_0^{(\beta)}]=0
 \quad(1\le\alpha,\beta\le d).
\]
The Lie brackets are interpreted after separating the affine constants.  Namely, if
$U^{(\alpha)}=e_\alpha+H^{(\alpha)}$, then
\begin{equation}\label{eq:affine-bracket-definition}
 [U^{(\alpha)},U^{(\beta)}]
 :=\partial_\alpha H^{(\beta)}-\partial_\beta H^{(\alpha)}
 +(H^{(\alpha)}\cdot\nabla)H^{(\beta)}
 -(H^{(\beta)}\cdot\nabla)H^{(\alpha)}.
\end{equation}
The linear terms belong to $\dot B^{s-1}_{p,1}$ and the nonlinear terms
are well defined under the columnwise solenoidal condition by
Lemma~\ref{lem:null}.  Then the same three identities hold for every
time in the lifespan.
\end{proposition}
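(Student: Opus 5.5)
The plan is to push the constraints forward through the Lagrangian representation rather than regularize. By \eqref{eq:internal-ode} with $\mathbb G(H,M)=M(I+H)$, the Lagrangian deformation $\mathcal U:=U\circ X$ satisfies $\partial_t\mathcal U=(\nabla v\,A)\,\mathcal U$ with $\mathcal U(0)=U_0$. From the proof of Lemma~\ref{lem:lag-regularity}, $F=I+\D Y$ solves the same affine linear ODE $\partial_tF=(\nabla v A)F$ with $F(0)=I$. Hence $F U_0$ solves it too, with initial value $U_0$ (interpreted in the affine algebra as $I+G+H_0+GH_0$). Uniqueness for this linear Banach-space ODE in $\B^s_{p,1}$, whose coefficient lies in $L^1_T\B^s_{p,1}$, then gives the exact push-forward formula
\[
 U\circ X=F\,U_0,\qquad\text{equivalently}\qquad U=(FU_0)\circ X^{-1}.
\]
This is the same argument used for $B=Fb_0$ in the MHD subsection. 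The approach requires no constraint-preserving regularization of the data.

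Each constraint is then checked from this formula using the critical-flow facts of Proposition~\ref{prop:critical-flow-jacobian}.

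The determinant is immediate. We have $\det(U\circ X)=\det F\,\det U_0=1$ in the affine algebra, since $\det$ is a polynomial and $\B^s_{p,1}$ is an algebra. Composing with $X^{-1}$ preserves the identity almost everywhere.

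For $\diver U^\top=0$, first take smooth divergence-free $u_n$, with flows $X_n$ and $F_n$. The classical Piola computation shows $\diver_x\bigl((F_nU_0)\circ X_n^{-1}\bigr)^\top=\bigl(\diver_y U_0^\top\bigr)\circ X_n^{-1}\,\det$-weighted, which is $0$. This holds for arbitrary, non-smooth $U_0\in I+\B^s_{p,1}$, because only $U_0$'s divergence enters, and it is justified by testing against $\varphi$ and changing variables with the volume-preserving $X_n$. We pass to the limit using $F_n-I\to F-I$ in $C_T\B^s_{p,1}$, the uniform convergence $X_n\to X$, and Lemma~\ref{lem:composition} at index $s$. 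This gives $(F_nU_0)\circ X_n^{-1}-I\to U-I$ in $C_T\B^s_{p,1}$, so the distributional identity persists.

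The Lie brackets are the main obstacle, since \eqref{eq:affine-bracket-definition} involves products at the forbidden level $s-1$. Under $\diver U^\top=0$, each column $H^{(\alpha)}$ is solenoidal, so these products are the compensated ones of Lemma~\ref{lem:null} in its middle form \eqref{eq:null-mid}. They are continuous in $\B^s_{p,1}\times\B^s_{p,1}\to\B^{s-1}_{p,1}$. For smooth $X_n$, the columns $(F_nU_0^{(\alpha)})\circ X_n^{-1}$ are the push-forwards $(X_n)_*U_0^{(\alpha)}$. Naturality of the Lie bracket gives $[(X_n)_*U_0^{(\alpha)},(X_n)_*U_0^{(\beta)}]=(X_n)_*[U_0^{(\alpha)},U_0^{(\beta)}]=0$.

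The delicate points are twofold. First, naturality must hold with only $U_0$ rough. I plan to establish this by mollifying $U_0$ into solenoidal-column approximants $U_0^\epsilon$; Leray-projecting $H_0^{(\alpha)}$ columnwise keeps the divergence constraint, and the bracket converges by \eqref{eq:null-mid}. The pushed-forward bracket $(F_n\,[\cdot,\cdot])\circ X_n^{-1}$ is then controlled at index $s-1$ by Lemma~\ref{lem:composition} together with multiplication by $F_n$. Second, that last product is itself of forbidden type. I expect to avoid it by writing the push-forward of a vector field in the Piola form $F_n w=w+(w\cdot\nabla)Y_n$ with $w$ solenoidal, as in \eqref{eq:intro-graph}. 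This requires knowing that the bracket of solenoidal fields is solenoidal, which holds because $[a,b]=\diver(b\otimes a-a\otimes b)$ is divergence free when $a$ and $b$ are. Granting this, all terms converge in $C_T\B^{s-1}_{p,1}$ through the compensated estimates. Letting $\epsilon\to0$ and then $n\to\infty$ shows that the bracket vanishes for every time in the lifespan.
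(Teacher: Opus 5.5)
Your proposal is correct and follows the paper's proof essentially step for step. Both arguments derive the push-forward identity $U\circ X=FU_0$ from uniqueness of the Lagrangian matrix ODE, and both read off the determinant and divergence constraints by a volume-preserving change of variables. For the brackets, both regularize $U_0$ into data with solenoidal columns, use naturality for the smooth flows $X_n$, rewrite $F_nC_n=C_n+(C_n\cdot\nabla)Y_n$ with $C_n$ solenoidal, and pass to the limit through Lemma~\ref{lem:null} and composition at index $s-1$. The only cosmetic difference is that you take the limits $\epsilon\to0$ and $n\to\infty$ separately, whereas the paper uses one index for the data cutoff and the flow approximation; either works, since the compensated and composition bounds are uniform.
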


\begin{proof}
Let $X$ be the volume-preserving flow, let $F=I+\D Y$ be the distributional derivative constructed in Proposition~\ref{prop:critical-flow-jacobian}, and set
$\mathcal U=U\circ X$.  The Lagrangian equation is
\[
 \partial_t\mathcal U=(\nabla vA)\mathcal U,
 \qquad \mathcal U(0)=U_0,
\]
whereas $\partial_tF=(\nabla vA)F$ and $F(0)=I$.  Uniqueness for this matrix ODE in the algebra $I+\dot B^s_{p,1}$ gives the exact identity
\begin{equation}\label{eq:hookean-pushforward}
 \mathcal U(t,y)=F(t,y)U_0(y).
\end{equation}
In particular, no approximation of $U_0$ satisfying the nonlinear constraints is needed.

Since $\det F=1$, the algebra property and \eqref{eq:hookean-pushforward} give, in the affine homogeneous algebra,
\[
 (\det U)\circ X=\det(FU_0)=\det U_0=1.
\]
The following distributional computation is justified by the smooth-flow approximation used in Proposition~\ref{prop:critical-flow-jacobian}.  For a column $V=U_0^{(\alpha)}$ and a test function $\varphi$, the change of variables $x=X(t,y)$ and \eqref{eq:hookean-pushforward} yield
\begin{align*}
 \langle\diver U^{(\alpha)}(t),\varphi\rangle
 &=-\int_{\mathbb R^d}F(t,y)V(y)\cdot
      (\nabla_x\varphi)(X(t,y))\,dy\\
 &=-\int_{\mathbb R^d}V(y)\cdot
      \nabla_y(\varphi\circ X)(t,y)\,dy=0.
\end{align*}
Thus $\diver U^\top=0$ in distributions for all times.

We finally prove preservation of commuting columns.  Put
$V_\alpha=U_0^{(\alpha)}=e_\alpha+H_0^{(\alpha)}$.  Choose smooth
Fourier cutoffs $J_n$ commuting with derivatives and set
\[
 V_{\alpha,n}=e_\alpha+J_nH_0^{(\alpha)}.
\]
These are smooth affine vector fields; their perturbative parts are
solenoidal because the columns of $U_0$ are solenoidal.  Their affine
brackets are
\begin{align}\label{eq:initial-affine-bracket-n}
 C_n:=[V_{\alpha,n},V_{\beta,n}]
 ={}&\partial_\alpha J_nH_0^{(\beta)}
    -\partial_\beta J_nH_0^{(\alpha)}\notag\\
 &+(J_nH_0^{(\alpha)}\cdot\nabla)J_nH_0^{(\beta)}
  -(J_nH_0^{(\beta)}\cdot\nabla)J_nH_0^{(\alpha)} .
\end{align}
The linear terms converge directly in $\dot B^{s-1}_{p,1}$, and the
nonlinear terms converge by the middle compensated estimate
\eqref{eq:null-mid}.  Hence
\[
 C_n\longrightarrow [V_\alpha,V_\beta]=0
 \quad\hbox{in }\dot B^{s-1}_{p,1}.
\]
For smooth vector fields the bracket of two divergence-free fields is
divergence free; in particular each $C_n$ is solenoidal.

Let $u_n\to u$ be smooth divergence-free velocities in the critical
solution norms, let $X_n=I+Y_n$ be their volume-preserving flows, and let
$F_n=DX_n$.  Define smooth push-forwards by
\[
 U_n^{(\alpha)}\circ X_n=F_nV_{\alpha,n}.
\]
Naturality of the Lie bracket for smooth diffeomorphisms gives
\begin{equation}\label{eq:bracket-pushforward-smooth}
 [U_n^{(\alpha)},U_n^{(\beta)}]\circ X_n
 =F_nC_n.
\end{equation}
Since $C_n$ is solenoidal, the right-hand side is the compensated Piola
expression
\[
 F_nC_n=C_n+(C_n\cdot\nabla)Y_n.
\]
Lemma~\ref{lem:null}, together with the uniform bound for $Y_n$ in
$C_T\dot B^{s+1}_{p,1}$, yields
\[
 \|F_nC_n\|_{C_T\dot B^{s-1}_{p,1}}
 \lesssim
 (1+\|Y_n\|_{C_T\dot B^{s+1}_{p,1}})
 \|C_n\|_{\dot B^{s-1}_{p,1}}
 \longrightarrow0.
\]
The uniform composition estimate at index $s-1$ therefore gives
\[
 [U_n^{(\alpha)},U_n^{(\beta)}]
 \longrightarrow0
 \quad\hbox{in }C_T\dot B^{s-1}_{p,1}.
\]
On the other hand, $F_nV_{\alpha,n}-e_\alpha$ converges to
$FV_\alpha-e_\alpha$ in $C_T\dot B^s_{p,1}$ by the affine algebra estimates, and the composition lemma at the index $s$ then gives, after subtracting the constant columns,
$H_n^{(\alpha)}:=U_n^{(\alpha)}-e_\alpha\to
H^{(\alpha)}:=U^{(\alpha)}-e_\alpha$ in $C_T\dot B^s_{p,1}$ for every
$\alpha$.  Expanding the affine bracket as in
\eqref{eq:affine-bracket-definition}, the linear derivative terms
converge in $C_T\dot B^{s-1}_{p,1}$, while the nonlinear terms converge
by Lemma~\ref{lem:null}, because the columns are solenoidal.  Thus
\[
 [U_n^{(\alpha)},U_n^{(\beta)}]
 \longrightarrow[U^{(\alpha)},U^{(\beta)}]
 \quad\hbox{in }C_T\dot B^{s-1}_{p,1}.
\]
The limiting bracket is therefore zero.
\end{proof}

The transformed elastic stress is
\[
 (\mathcal U\mathcal U^\top-I)A^\top.
\]
Its constant part has zero divergence by the Piola identity, exactly as required by the affine homogeneous-space convention.

\subsection{Nondiffusive Oldroyd--B}
Let $\mathcal V=\mathbb R^{d\times d}$ and let
$\mathcal W=\operatorname{Sym}_d$.  Set $\bar a=0$ and
\[
 \mathbb S(\tau)=\mu_1\tau,
 \qquad
 \mathbb G(\tau,M)=-a\tau-Q_b(\tau,M)+\mu_2D(M).
\]
The map $Q_b$ is bilinear, the dependence on $M$ is affine, and every term preserves symmetry.  Hence the law is admissible.  Only the linear symmetry constraint is included in $\mathcal W$; positivity or conformation-tensor constraints are nonlinear and are not part of the present admissibility framework.

The Lagrangian stress $T=\tau\circ X$ satisfies the matrix ODE
\begin{equation}\label{eq:oldroyd-lag-ode}
 \partial_tT+aT+Q_b(T,\nabla vA)=\mu_2D(\nabla vA).
\end{equation}
All products in this equation are controlled in the algebra
$\dot B^s_{p,1}$, with each occurrence of $\nabla vA$ understood as
$\nabla v+\nabla v(A-I)$.  For two solutions, writing
$G_i=\nabla v_iA_i$ and subtracting \eqref{eq:oldroyd-lag-ode} gives
\begin{align*}
 \|\delta T(t)\|_{\dot B^s_{p,1}}
 &\le \|\delta\tau_0\|_{\dot B^s_{p,1}}
 +C_{\mathcal M}\int_0^t
   \bigl(1+\|G_1\|_{\dot B^s}+\|G_2\|_{\dot B^s}\bigr)
   \|\delta T\|_{\dot B^s}\,d\theta\\
 &\quad+C_{\mathcal M}\int_0^t
   \|G_1-G_2\|_{\dot B^s}\,d\theta.
\end{align*}
Since
\[
 G_1-G_2=(\nabla\delta v)A_1+(\nabla v_2)(A_1-A_2),
\]
Gronwall's lemma yields
\[
 \|\delta T\|_{L^\infty_T\dot B^s_{p,1}}
 \le C_{\mathcal M}\bigl(
 \|\delta\tau_0\|_{\dot B^s_{p,1}}+\mathcal Z_T\bigr).
\]
The momentum forcing is $\mu_1\operatorname{div}(TA^\top)$, and the affine splitting gives
\[
 T_1A_1^\top-T_2A_2^\top
 =\delta T+\delta T(A_1-I)^\top+T_2(A_1-A_2)^\top.
\]
Hence
\[
 \|T_1A_1^\top-T_2A_2^\top\|_{L^1_T\dot B^s_{p,1}}
 \le C_{\mathcal M}T
 \bigl(\|\delta\tau_0\|_{\dot B^s_{p,1}}+\mathcal Z_T\bigr).
\]
Thus the abstract closure estimate applies without any additional negative-index product.  This proves Theorem~\ref{cor:oldroyd-main}.  More precisely, the local critical theorem in \cite{DeAnnaPaicu} for the principal source-free model in dimensions $d\ge3$ assumes $p<2d$; our conclusion covers $p\ge2d$ and also allows the damping, source, and non-corotational objective terms in \eqref{eq:Oldroyd-intro}.  We make no claim here about the global large-data or critical-Lorentz results of \cite{DeAnnaPaicu}.  The restriction to the Besov summability index $1$ is consistent with the ill-posedness for $q>1$ proved in \cite{LiYuZhu}.

\subsection{Why the three applications are not repetitions}
In MHD, the internal variable is a solenoidal vector and admits an explicit frozen-in formula.  In Hookean elasticity, the internal variable is an affine matrix field and the physical solution lies on a nonlinear invariant constraint manifold.  In Oldroyd--B, the stress is neither solenoidal nor a deformation gradient and instead solves an objective matrix ODE.  The common part of the analysis is therefore exactly the rough Piola geometry of the velocity, while the model-specific evolution is isolated in the algebra-level Lagrangian ODE.  This separation is the content of the abstract principle rather than a finite repetition of the MHD proof.

\section{Limitations and further directions}
The mechanism uses three structural ingredients: a volume-preserving flow, parabolic maximal regularity for the velocity, and a constitutive evolution that becomes an ODE in $\B^s_{p,1}$ after composition.  It therefore extends directly to finite systems of frozen-in vector or matrix fields and to other smooth objective stress laws whose dependence on the velocity gradient is affine in the sense of Definition~\ref{def:admissible}.

The endpoint $p=\infty$ is not included, whereas the finite threshold $p=2d$ is covered by the same estimates.  Related sharp ill-posedness phenomena for nondiffusive MHD further emphasize the delicacy of endpoint regularity; see \cite{ChenNieYe}.  In the proof of Lemma~\ref{lem:null}, the high--high kernel is $2^{-2s(k-j)}$ and loses summability when $s=0$.  A logarithmically corrected endpoint or a frequency-envelope version would require an additional idea.  Similarly, the present proof is intrinsically tied to the $\ell^1$ Besov summation; known norm-inflation results for $q>1$ indicate that this restriction is not merely technical in the nondiffusive models considered here.

Bounded domains and free boundaries require a boundary-compatible version of the compensated product, solenoidal extension operators, and Stokes maximal regularity with nonhomogeneous boundary data.  Compressible systems require replacing $A$ by the cofactor $J A$ and controlling the Jacobian.  These directions lie beyond the present paper.

\appendix
\section{Local construction for admissible laws}\label{sec:existence-appendix}

We give the details needed for Proposition~\ref{prop:local-input}.  Only the existence part is used before uniqueness is established, so compactness rather than a contraction argument is sufficient.

\subsection{Smooth parabolic approximation}
Let $J_n$ be a smooth self-adjoint Fourier cutoff to
$\{2^{-n}\le |\xi|\le2^n\}$, commuting with derivatives and the Leray projector, and let $\varepsilon_n\downarrow0$.  We consider
\begin{equation}\label{eq:approx-system}
\left\{
\begin{aligned}
 \partial_tu_n-\nu\Delta u_n
 &=-\mathbb P(u_n\cdot\nabla u_n)
   +\mathbb P\operatorname{div}\mathscr S(r_n),\\
 \partial_tr_n+u_n\cdot\nabla r_n-\varepsilon_n\Delta r_n
 &=\mathscr G(r_n,\nabla u_n),\\
 (u_n,r_n)|_{t=0}&=(J_nu_0,J_nr_0).
\end{aligned}
\right.
\end{equation}
For each fixed $n$, the initial data are smooth and the second equation has a positive artificial diffusivity.  Standard quasilinear parabolic theory therefore gives a maximal smooth solution; see, for example, \cite{Amann}.  This regularization is used only for construction: all estimates below are independent of $\varepsilon_n$, and the artificial diffusion disappears in the limit.  The velocity remains solenoidal, and the finite-dimensional constraint $r_n(t,x)\in\mathcal W$ is preserved because the heat operator and the constitutive vector field both leave $\mathcal W$ invariant.

\subsection{Uniform critical estimates}
The large-data construction is organized around the heat flow of the initial velocity.  Set
\[
 u_{L,n}(t)=e^{\nu t\Delta}J_nu_0,
 \qquad w_n=u_n-u_{L,n}.
\]
Then $w_n(0)=0$ and
\begin{equation}\label{eq:w-equation}
 \partial_tw_n-\nu\Delta w_n
 =-\mathbb P(u_n\cdot\nabla u_n)
 +\mathbb P\operatorname{div}\mathscr S(r_n).
\end{equation}
For $0<T$ define
\[
 \Lambda_n(T):=
 \norm{u_{L,n}}{L^1_T\B^{s+1}_{p,1}}
 +\norm{u_{L,n}}{L^2_T\B^s_{p,1}},
\]
\[
 W_n^\infty(T):=\norm{w_n}{L^\infty_T\B^{s-1}_{p,1}},
 \qquad
 W_n^1(T):=\norm{w_n}{L^1_T\B^{s+1}_{p,1}},
 \qquad
 R_n(T):=\norm{r_n}{L^\infty_T\B^s_{p,1}}.
\]
The heat estimates give
\begin{align}\label{eq:linear-heat-small-general}
 \sup_n\Lambda_n(T)\longrightarrow0\qquad(T\downarrow0).
\end{align}
Indeed the $L^1_T\dot B^{s+1}_{p,1}$ part follows from
\begin{align}\label{eq:homogeneous-low-heat-small}
 &\left\|e^{\nu t\Delta}\sum_{j\le N}\dot\Delta_j u_0\right\|_{L^1_T\dot B^{s+1}_{p,1}}\notag\\
 &\qquad\lesssim
 \sum_{j\le N}2^{j(s-1)}
 \bigl(1-e^{-c\nu T2^{2j}}\bigr)
 \|\dot\Delta_j u_0\|_{L^p}
 \longrightarrow0
 \qquad(T\downarrow0),
\end{align}
combined with smallness of the high-frequency tail.  The $L^2_T\dot B^s_{p,1}$ part is identical, with the factor
$\bigl(1-e^{-c\nu T2^{2j}}\bigr)^{1/2}$ in place of
$1-e^{-c\nu T2^{2j}}$.  Notice that the sum $j\le N$ is still infinite in the homogeneous decomposition; the convergence is a dominated-convergence statement in the Besov coefficient sequence.

Since $\diver u_n=0$,
\[
 u_n\cdot\nabla u_n=\diver(u_n\otimes u_n).
\]
The algebra property of $\dot B^s_{p,1}$ and interpolation give
\begin{align}
 \norm{u_n\cdot\nabla u_n}{L^1_T\B^{s-1}_{p,1}}
 &\lesssim \norm{u_n}{L^2_T\B^s_{p,1}}^2\notag\\
 &\lesssim \Lambda_n(T)^2+W_n^\infty(T)W_n^1(T).
 \label{eq:approx-convection}
\end{align}
Here and below constants may depend on $\nu$, which is fixed.  If $R_n(T)\le R$, the tame stress estimate gives
\begin{equation}\label{eq:approx-stress}
 \norm{\mathscr S(r_n)}{L^1_T\B^s_{p,1}}
 \le C_R T R_n(T).
\end{equation}
Heat maximal regularity applied to the zero-initial-data equation
\eqref{eq:w-equation} yields
\begin{align}
 W_n^\infty(T)+W_n^1(T)
 \le{}&C\Lambda_n(T)^2
 +C W_n^\infty(T)W_n^1(T)
 +C C_R T R_n(T).
 \label{eq:exist-w}
\end{align}
This is the point where the heat-flow splitting is essential: no term of the form
$\|u_0\|_{\dot B^{s-1}}W_n^1$ has to be absorbed.

Let
\[
 V_n(t)=\int_0^t\norm{u_n(\tau)}{\B^{s+1}_{p,1}}\dd\tau
 \le \Lambda_n(t)+W_n^1(t).
\]
Lemma~\ref{lem:endpoint-transport}, together with \eqref{eq:G-single}, gives
\begin{align}
 R_n(t)+c\varepsilon_n\|r_n\|_{L^1(0,t;\dot B^{s+2}_{p,1})}
 \le{}&C e^{CV_n(t)}\Biggl[
 \|J_nr_0\|_{\dot B^s_{p,1}}\notag\\
 &\quad+C_R\int_0^t
 \left(\|r_n(\tau)\|_{\dot B^s_{p,1}}
 +\|u_n(\tau)\|_{\dot B^{s+1}_{p,1}}\right)\,d\tau\Biggr].
 \label{eq:exist-r}
\end{align}
The constants are independent of $\varepsilon_n$.

We now close the large-data bootstrap.  The order of choices is important: first fix the internal radius, then the zero-initial-data velocity radius, and only then choose the lifespan.  Fix
\[
 R_*:=2C\bigl(1+\norm{r_0}{\B^s_{p,1}}\bigr),
\]
where $C$ is large enough to dominate the constant in \eqref{eq:exist-r} for
$V_n\le1$.  Choose a small number $\rho>0$, depending only on the universal constants and $\nu$, so that $C\rho^2\le\rho/8$.  By \eqref{eq:linear-heat-small-general} choose $T>0$ so small that, uniformly in $n$,
\[
 C\Lambda_n(T)^2+C C_{R_*}T R_*\le \rho/4,
 \qquad
 \Lambda_n(T)+\rho\le1,
\]
and also so that the right-hand side of \eqref{eq:exist-r} is at most
$R_*/2$ whenever $R_n\le R_*$ and $W_n^\infty+W_n^1\le\rho$.  On the maximal subinterval on which
\[
 R_n\le R_*,\qquad W_n^\infty+W_n^1\le\rho,
\]
estimates \eqref{eq:exist-w} and \eqref{eq:exist-r} make both inequalities strict.  Hence the bootstrap interval is $[0,T]$.  Consequently
\begin{equation}\label{eq:uniform-existence-bounds}
 \sup_n\left(
 \norm{u_n}{L^\infty_T\B^{s-1}_{p,1}}
 +\norm{u_n}{L^1_T\B^{s+1}_{p,1}}
 +\norm{u_n}{L^2_T\B^s_{p,1}}
 +\norm{r_n}{L^\infty_T\B^s_{p,1}}
 \right)<\infty.
\end{equation}
Indeed $u_n=u_{L,n}+w_n$, the heat flow is bounded in
$L^\infty_T\dot B^{s-1}_{p,1}$ by the initial norm, and the time-integrable parts are bounded by $\Lambda_n+W_n^1$.

The pressure is recovered from
\begin{equation}\label{eq:approx-pressure}
 -\Delta P_n=\partial_i\partial_j
 \bigl((u_n)_i(u_n)_j-\mathscr S(r_n)_{ij}\bigr).
\end{equation}
Riesz-transform bounds, \eqref{eq:approx-convection}, and
\eqref{eq:approx-stress} give
\begin{equation}\label{eq:approx-pressure-bound}
 \norm{P_n}{L^1_T\B^s_{p,1}}
 \lesssim
 \norm{u_n}{L^2_T\B^s_{p,1}}^2
 +\norm{\mathscr S(r_n)}{L^1_T\B^s_{p,1}}.
\end{equation}
The momentum equation then gives
\begin{equation}\label{eq:approx-material-bound}
 \sup_n\norm{D_tu_n}{L^1_T\B^{s-1}_{p,1}}<\infty.
\end{equation}
The internal-variable equation, \eqref{eq:G-single}, and the dissipative term in \eqref{eq:exist-r} give
\begin{equation}\label{eq:approx-internal-material}
 \sup_n\|\partial_tr_n+u_n\cdot\nabla r_n\|_{L^1_T\dot B^s_{p,1}}<\infty.
\end{equation}
For compactness it is also useful that, on every compact spatial set and for every $0<\varepsilon<s$,
\[
 \partial_tu_n\quad\hbox{is bounded in }L^1_T\B^{s-1}_{p,1},
 \qquad
 \partial_tr_n\quad\hbox{is bounded in }L^1_T\B^{s-1-\varepsilon}_{p,1,\mathrm{loc}}.
\]
The second assertion follows from the transport product estimate with
$u_n\in L^1_T\dot B^{s+1}_{p,1}\hookrightarrow L^1_T\mathrm{Lip}$, the uniform bound for $r_n$ in $\dot B^s_{p,1}$, the bound for the source term, and
$\varepsilon_n\|r_n\|_{L^1_T\dot B^{s+2}_{p,1}}\lesssim1$.

\subsection{Compactness and passage to the limit}
We give the compactness argument in enough detail to identify all nonlinear terms.  Fix a compactly supported cutoff $\chi$ and choose $0<\varepsilon<s$.  Multiplication by $\chi$ maps the homogeneous spaces continuously into the corresponding local inhomogeneous spaces, and on a fixed compact support the embeddings
\[
 B^{\alpha}_{p,1}\Subset B^{\alpha-\varepsilon}_{p,1}
\]
are compact.  From the uniform bounds above and the equations, one has
\[
 \partial_tu_n\quad\hbox{bounded in }L^1_T\dot B^{s-1}_{p,1},
 \qquad
 \partial_tr_n\quad\hbox{bounded in }
 L^1_T\dot B^{s-1-\varepsilon}_{p,1,\mathrm{loc}}.
\]
Simon's compactness theorem \cite{SimonCompactness}, followed by a diagonal extraction over cutoffs, gives
\begin{align}
 u_n&\longrightarrow u
 &&\text{in }L^1(0,T;\dot B^{s+1-\varepsilon}_{p,1,\mathrm{loc}})
 \cap L^2(0,T;\dot B^{s-\varepsilon}_{p,1,\mathrm{loc}}),
 \label{eq:compact-u}\\
 r_n&\longrightarrow r
 &&\text{in }L^q(0,T;\dot B^{s-\varepsilon}_{p,1,\mathrm{loc}})
 \quad\text{for every finite }q.
 \label{eq:compact-r}
\end{align}
We do not use, and do not claim, strong compactness in $C_t$ at this stage; the time continuity in the critical norms is recovered from the limiting equations after the passage to the limit.  Since $s-\varepsilon>0$, \eqref{eq:compact-u} implies
\[
 \nabla u_n\longrightarrow \nabla u
 \quad\hbox{in }L^1(0,T;L^p_{\mathrm{loc}}).
\]
Up to a further subsequence, $r_n\to r$ almost everywhere, and the uniform embedding
$\dot B^s_{p,1}\hookrightarrow L^\infty$ supplies a common pointwise bound.  Since
$\mathcal W$ is a finite-dimensional closed linear subspace and each $r_n$ is
$\mathcal W$-valued, the limit satisfies $r(t,x)\in\mathcal W$ for almost every
$(t,x)$.

We now identify the equations.  Since $u_n$ is solenoidal, the transport terms may be tested in divergence form.  The strong local convergence of $u_n$ in $L^2_TL^p_{\mathrm{loc}}$, together with the uniform bound of $u_n$ in $L^2_T\dot B^s_{p,1}\hookrightarrow L^2_TL^\infty$, gives
\[
 u_n\otimes u_n\longrightarrow u\otimes u
 \quad\hbox{in }L^1_{\mathrm{loc}}((0,T)\times\mathbb R^d).
\]
Moreover, $r_n$ is uniformly bounded in $L^\infty$ and converges almost everywhere, while $u_n\to u$ in $L^1_{\mathrm{loc}}$; hence
\[
 r_nu_n\longrightarrow ru
 \quad\hbox{in }L^1_{\mathrm{loc}}((0,T)\times\mathbb R^d).
\]
Thus both $u_n\cdot\nabla u_n$ and $u_n\cdot\nabla r_n$ converge in distributions.  Smooth finite-dimensional composition and dominated convergence imply
\[
 \mathscr S(r_n)\longrightarrow\mathscr S(r)
 \quad\hbox{locally in }L^q
\]
for every finite $q$.  For the constitutive source, use the affine structure
\[
 \mathscr G(r_n,\nabla u_n)
 =g(\bar a+r_n)+\mathbb L(\bar a+r_n)[\nabla u_n].
\]
The first term converges by dominated convergence.  In the second, split
\begin{align*}
 &\mathbb L(\bar a+r_n)[\nabla u_n]
 -\mathbb L(\bar a+r)[\nabla u]\\
 &\quad=\mathbb L(\bar a+r_n)[\nabla(u_n-u)]
 +\bigl(\mathbb L(\bar a+r_n)-\mathbb L(\bar a+r)\bigr)[\nabla u].
\end{align*}
The first term tends to zero in $L^1_TL^p_{\mathrm{loc}}$ by the strong convergence of $\nabla u_n$ and the uniform $L^\infty$ bound on the coefficient.  For the second term, on each compact spatial set $K$ the $L^p(K)$ norm tends to zero for almost every time by dominated convergence in space, and it is bounded by
$C\|\nabla u(t)\|_{L^p(K)}$, which is integrable in time.  Hence it also tends to zero by dominated convergence in time.  Thus the constitutive source converges in distributions.

The artificial diffusion disappears in a fixed global negative norm:
\[
 \|\varepsilon_n\Delta r_n\|_{L^\infty_T\dot B^{s-2}_{p,1}}
 \le \varepsilon_n\|r_n\|_{L^\infty_T\dot B^s_{p,1}}
 \longrightarrow0.
\]
Consequently $(u,r)$ solves \eqref{eq:general-system}.  The initial trace is inherited from the approximations: integrating against smooth space--time test functions and passing to the limit identifies the trace at $t=0$ with
$(u_0,r_0)$, because $J_nu_0\to u_0$ in $\dot B^{s-1}_{p,1}$ and
$J_nr_0\to r_0$ in $\dot B^s_{p,1}$.  Lemma~\ref{lem:bochner-besov-fatou}, used only with the exponents $q=\infty$ and $q=2$, gives
\[
 u\in L^\infty_T\dot B^{s-1}_{p,1}\cap L^2_T\dot B^s_{p,1},
 \qquad r\in L^\infty_T\dot B^s_{p,1}.
\]
The endpoint $q=1$ of Lemma~\ref{lem:bochner-besov-fatou} is not invoked here.  Instead, the pressure estimate applied to the limiting equation gives
$P\in L^1_T\dot B^s_{p,1}$, because
$u\otimes u\in L^1_T\dot B^s_{p,1}$ and
$\mathscr S(r)\in L^1_T\dot B^s_{p,1}$.  The Duhamel formula, or equivalently the standard heat maximal-regularity estimate with forcing in
$L^1_T\dot B^{s-1}_{p,1}$, then yields
$u\in C_T\dot B^{s-1}_{p,1}\cap L^1_T\dot B^{s+1}_{p,1}$.  Applying Lemma~\ref{lem:endpoint-transport} to the limiting internal equation gives
$r\in C_T\dot B^s_{p,1}$.  Finally,
\eqref{eq:pressure-class} follows from the same pressure estimate,
$D_tu\in L^1_T\dot B^{s-1}_{p,1}$ follows from the momentum equation, and
\eqref{eq:material-class} follows from Lemma~\ref{lem:constitutive-tame} applied to the limiting constitutive source.

\subsection{Common lifespan for convergent data}
Suppose $(u_{0,n},r_{0,n})\to(u_0,r_0)$ in the critical phase space.  The internal data remain in one fixed $\dot B^s_{p,1}$ ball.  The only point requiring uniformity is the smallness of the linear heat flows.  Given $\delta>0$, choose one dyadic index $N$ such that, for all sufficiently large $n$,
\[
 \sum_{j>N}2^{j(s-1)}
 \|\dot\Delta_ju_{0,n}\|_{L^p}<\delta;
\]
this follows from convergence in the $\ell^1$ Besov norm.  The heat estimate gives a uniform contribution bounded by $C\delta$ for this high-frequency tail in both
$L^1_T\dot B^{s+1}_{p,1}$ and $L^2_T\dot B^s_{p,1}$.  For the low-frequency part $j\le N$, use \eqref{eq:homogeneous-low-heat-small} and its $L^2$ analogue.  Although this is still an infinite homogeneous tail, the convergence is dominated convergence in the $\ell^1$ sequence of Besov coefficients.  Uniformity in $n$ follows by writing
$u_{0,n}=u_0+(u_{0,n}-u_0)$: the heat map is uniformly bounded from
$\dot B^{s-1}_{p,1}$ to the two heat norms, while the fixed datum $u_0$ is handled by dominated convergence.  Therefore
\[
 \sup_{n\ge n_0}\left(
 \|e^{\nu t\Delta}u_{0,n}\|_{L^1_T\dot B^{s+1}_{p,1}}
 +\|e^{\nu t\Delta}u_{0,n}\|_{L^2_T\dot B^s_{p,1}}
 \right)\longrightarrow0
 \qquad(T\downarrow0).
\]
Applying the perturbative bootstrap for $u_n=e^{\nu t\Delta}u_{0,n}+w_n$ with the same constants $R_*$ and $\rho$ gives a common positive lifespan and uniform critical bounds for all large $n$.  This proves the common-lifespan part of Proposition~\ref{prop:local-input}.

We finally record the uniform absolute continuity required later.  Reapplying the same zero-initial-data estimate on $(0,t)$ gives
\[
 \sup_{n\ge n_0}\bigl(W_n^\infty(t)+W_n^1(t)\bigr)\longrightarrow0
 \qquad(t\downarrow0),
\]
because the corresponding linear heat norms and the factor $C_{R_*}t$ tend to zero uniformly.  Consequently
\[
 \sup_{n\ge n_0}\left(
 \|u_n\|_{L^1(0,t;\dot B^{s+1}_{p,1})}
 +\|u_n\|_{L^2(0,t;\dot B^s_{p,1})}
 \right)\longrightarrow0
 \qquad(t\downarrow0).
\]
The stress contribution is bounded by $C_{R_*}t$, the pressure estimate follows from
\eqref{eq:approx-pressure-bound}, and the momentum equation gives the analogous assertion for $D_tu_n$.  The internal material derivative is controlled by
$C_{R_*}(t+\|u_n\|_{L^1(0,t;\dot B^{s+1})})$.  Hence every time-integrable quantity used in the stability argument is uniformly absolutely continuous at the initial time.

All estimates in this appendix are obtained before uniqueness is invoked.  After Section~\ref{sec:stability} proves uniqueness, every compactness subsequence has the same limit; this identifies the solution family without introducing a circular dependence into the existence or lifespan argument.

\subsection{Continuation}
Let $(u,r)$ be a solution on $[0,T^*)$ satisfying the bound in Theorem~\ref{thm:abstract-main}.  Interpolation gives $u\in L^2(0,T^*;\dot B^s_{p,1})$.  The equation and the pressure estimate then imply
\[
 \partial_tu\in L^1(0,T^*;\dot B^{s-1}_{p,1}),
\]
so $u(t)$ has a limit $u_*$ in $\dot B^{s-1}_{p,1}$ as $t\uparrow T^*$.  Hence, for $t_0$ sufficiently close to $T^*$, the heat flows issued from $u(t_0)$ have a common smallness bound:
\[
 \|e^{\nu(t-t_0)\Delta}u(t_0)\|_{L^1(t_0,t_0+\tau;\dot B^{s+1}_{p,1})}
 +\|e^{\nu(t-t_0)\Delta}u(t_0)\|_{L^2(t_0,t_0+\tau;\dot B^s_{p,1})}
\]
is uniformly small for one $\tau>0$ independent of such $t_0$.  This follows by comparing $u(t_0)$ with the fixed limit $u_*$ and using the low--high heat estimate above.  The internal data $r(t_0)$ stay in one fixed bounded subset of $\dot B^s_{p,1}$.  Restarting the perturbative construction at $t_0$, with
$u_L(t)=e^{\nu(t-t_0)\Delta}u(t_0)$ and $u=u_L+w$, gives a lifespan bounded from below independently of $t_0$.  By uniqueness, the restarted solution coincides with the original one on the overlap $[t_0,T^*)$.  Choosing $t_0$ so close to $T^*$ that $t_0+\tau>T^*$ therefore extends the solution beyond $T^*$ and proves the continuation assertion.

\section{Vector-valued BV calculus for the Piola graph}\label{sec:bv-appendix}
We record the functional-analytic input used in Lemma~\ref{lem:time-diff}.  The details are included to make clear why the operator domain must be the solenoidal subspace.  We use only the elementary vector-measure product rule, together with the Radon--Nikodym property of the Besov space; no further fine structure of Besov spaces is needed here.

\begin{proposition}[Radon--Nikodym property and BV product rule]\label{prop:bv-product-rule}
Let $1<p<\infty$ and $\gamma\in\mathbb R$.
\begin{enumerate}[label=\textup{(\roman*)}]
 \item The homogeneous Besov space $\dot B^\gamma_{p,1}(\mathbb R^d)$, realized in $\mathcal S'_h$, has the Radon--Nikodym property.  Every closed subspace, in particular the solenoidal subspace, has the same property.
 \item Let $E_0$ have the Radon--Nikodym property, let $E_1$ be a Banach space, let $z\in BV(0,T;E_0)$, and let
 $L\in W^{1,1}(0,T;\mathcal L(E_0,E_1))$.  Then
 \begin{equation}\label{eq:bv-product-rule}
  D(Lz)=L\,Dz+L'z\,dt
 \end{equation}
 as $E_1$-valued Radon measures.
\end{enumerate}
\end{proposition}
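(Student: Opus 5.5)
For part (i), I would realize $\dot B^\gamma_{p,1}$ isometrically as a closed subspace of the vector-valued sequence space $\ell^1_\gamma(\mathbb Z;L^p)$ through the map $f\mapsto(\dot\Delta_jf)_{j\in\mathbb Z}$. Here $\ell^1_\gamma$ carries the weights $2^{j\gamma}$. The image is closed because the Littlewood--Paley blocks of an $\ell^1_\gamma(L^p)$-limit of block sequences reconstruct, in $\mathcal S'_h$, an element whose blocks are exactly the limits. This is the same compatibility argument used in the proof of Proposition~\ref{prop:closed-piola}.

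The Radon--Nikodym property then follows from two classical facts. First, $L^p$ with $1<p<\infty$ is reflexive and hence has the RNP. Second, an $\ell^1$-sum of countably many spaces with the RNP again has the RNP. I would check the second fact directly on $\ell^1_\gamma(L^p)$. Given a countably additive measure $\mu$ of bounded variation, absolutely continuous with respect to Lebesgue measure, its coordinates $\mu_j$ are $L^p$-valued measures with $\sum_j2^{j\gamma}|\mu_j|\le|\mu|$. Each $\mu_j$ has a density $h_j$ with $|\mu_j|=\|h_j\|_{L^p}\,dt$. Monotone convergence then gives $\int\sum_j2^{j\gamma}\|h_j\|\,dt\le|\mu|(0,T)<\infty$, so $(h_j)_j$ is a Bochner-integrable density for $\mu$. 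Finally, the RNP passes to closed subspaces, and the solenoidal subspace is closed because $\operatorname{div}$ is continuous into $\mathcal S'$. Together these give (i).

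For part (ii), I would mollify $z$ in time, setting $z_\epsilon=z*\rho_\epsilon$ on a slightly shrunken interval. Then $z_\epsilon\in C^1(E_0)$ with $z_\epsilon'=Dz*\rho_\epsilon$. The functions $z_\epsilon$ are uniformly bounded and satisfy $z_\epsilon(t)\to z(t)$ at every point of continuity of a good representative of $z$, that is, outside a countable set. Since $L$ is absolutely continuous, hence continuous, into $\mathcal L(E_0,E_1)$, the classical product rule gives
\[
(Lz_\epsilon)'=Lz_\epsilon'+L'z_\epsilon .
\]
I would then test both sides against a scalar $\varphi\in C_c^\infty$ and pass to the limit term by term:
\begin{itemize}[label=\textendash]
 \item the left side converges to $-\int\varphi'Lz\,dt$;
 \item $\int\varphi L'z_\epsilon\,dt\to\int\varphi L'z\,dt$ by dominated convergence, since $\|L'\|\in L^1$ and $z_\epsilon$ is bounded;
 \item $\int\varphi Lz_\epsilon'\,dt=\int(\varphi L)*\check\rho_\epsilon\,dDz\to\int\varphi L\,dDz$, by Fubini for the vector measure and uniform convergence of the continuous function $(\varphi L)*\check\rho_\epsilon$.
\end{itemize}
This gives the identity (eq:bv-product-rule) as $E_1$-valued distributions, and both sides are measures, so it holds as $E_1$-valued Radon measures.

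The main obstacle is making sense of $L\,Dz$ as a measure with operator-valued integrand. I would define it using the polar decomposition $Dz=\xi|Dz|$, which is exactly where the RNP enters. For $E_0$ with the RNP, $Dz$ has bounded variation, so $Dz\ll|Dz|$ admits a Bochner density $\xi$ with $\|\xi\|=1$ for $|Dz|$-almost every $t$. I then set $L\,Dz:=L(t)\xi(t)\,|Dz|$, which is well defined since $L$ is continuous. The Fubini step above should be verified with this definition, via the integrals $\int\varphi L\xi\,d|Dz|$. Splitting $Dz$ into its absolutely continuous and singular parts then yields the form used in Lemma~\ref{lem:time-diff}.
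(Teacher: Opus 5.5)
Your proposal is correct. Part (i) is essentially the paper's argument, and part (ii) follows a genuinely different route.

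\textbf{Part (i).} The paper also embeds $\dot B^\gamma_{p,1}$ into $\ell^1(\mathbb Z;L^p)$ by the analysis map $f\mapsto(2^{j\gamma}\dot\Delta_jf)_j$. It gets closedness of the image differently: it uses a bounded synthesis map $\mathcal R(g_j)_j=\sum_j2^{-j\gamma}\widetilde{\dot\Delta}_jg_j$ with $\mathcal R\mathcal A=I$, so the image is complemented. It then cites Diestel--Uhl for stability of the Radon--Nikodym property under $\ell^1$-sums and closed subspaces, whereas you check the $\ell^1$-sum case by hand.

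Both closedness arguments rely on completeness of the $\mathcal S'_h$-realization. That holds only for $\gamma<d/p$ or $\gamma=d/p$; for $\gamma>d/p$ the low-frequency reconstruction $\sum_{j<0}g_j$ (equivalently, the paper's $\mathcal R$) need not converge in $\mathcal S'$. So the statement ``for all $\gamma\in\mathbb R$'' should be read with this restriction. It covers the only case actually used, $\gamma=s-1$.

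\textbf{Part (ii).} The paper proves the identity for step functions $z$ and smooth operator paths $L$, using the jumps plus the ordinary product rule on intervals of constancy. It then reaches general $z$ by strict approximation and $L\in W^{1,1}$ by smoothing, and only sketches the passage to the limit in the vector measures $L_n\,Dz_n$.

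You instead mollify only $z$ and keep $L$ as it is, using the product rule for an absolutely continuous operator path times a $C^1$ function. Fubini in $dt\otimes d|Dz|$, after the polar decomposition, moves the mollifier onto the continuous test function $\varphi L$. The measure term then converges simply because $(\varphi L)*\check\rho_\epsilon\to\varphi L$ uniformly. This yields the identity in $E_1$-valued distributions, which you upgrade to measures by uniqueness.

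Your route is more explicit and self-contained than the paper's sketch and needs no approximation of $L$. In exchange, the paper's step-function picture makes the atomic (jump) part of $D(Lz)$ directly visible.
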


\begin{proof}
For the first assertion, choose Littlewood--Paley cutoffs
$\dot\Delta_j$ and enlarged cutoffs $\widetilde{\dot\Delta}_j$ satisfying the usual reconstruction identity.  The analysis map
\[
 \mathcal A f=\bigl(2^{j\gamma}\dot\Delta_jf\bigr)_{j\in\mathbb Z}
\]
is an isomorphic embedding of $\dot B^\gamma_{p,1}$ into
$\ell^1(\mathbb Z;L^p)$.  The synthesis map
\[
 \mathcal R(g_j)_j
 =\sum_{j\in\mathbb Z}2^{-j\gamma}
   \widetilde{\dot\Delta}_jg_j
\]
is bounded from $\ell^1(L^p)$ to $\dot B^\gamma_{p,1}$ and satisfies
$\mathcal R\mathcal A=I$.  Hence the Besov space is isomorphic to a complemented, and therefore closed, subspace of $\ell^1(L^p)$.  Since $L^p$ has the Radon--Nikodym property for $1<p<\infty$, its $\ell^1$-sum has that property, and closed subspaces inherit it; see \cite{DiestelUhl}.

For the second assertion, use the continuous representative of $L$.  Since
$E_0$ has the Radon--Nikodym property, the $E_0$-valued measure $Dz$ admits the polar representation
\[
 Dz=\xi\,|Dz|,
 \qquad \xi(t)\in E_0,\quad \|\xi(t)\|_{E_0}=1
 \quad |Dz|\hbox{-a.e.}
\]
We define
\[
 L\,Dz:=L(t)\xi(t)\,|Dz|.
\]
For step functions $z$ and smooth operator paths $L$, formula
\eqref{eq:bv-product-rule} follows by summing the jumps and applying the ordinary product rule on the intervals of constancy.  General Banach-valued $BV$ functions are obtained by strict approximation, and $W^{1,1}$ operator paths are approximated in $W^{1,1}$ by smooth paths.  Passing to the limit in the vector measures yields \eqref{eq:bv-product-rule}; equivalently, this is the standard vector-measure product rule in \cite{DiestelUhl}.
\end{proof}

In Lemma~\ref{lem:time-diff}, $E_0=E_\sigma$ and $E_1=E$.  Because $z(t)$ belongs to $E_\sigma$ for every $t$, the quotient map $E\to E/E_\sigma$ annihilates $z$, and hence also annihilates the measure $Dz$.  Thus the polar vector of the singular part lies in $E_\sigma$ almost everywhere.  This is exactly what permits the coercivity of $L_Y$ on the solenoidal domain to eliminate the singular derivative.


\begin{thebibliography}{99}

\bibitem{Amann}
H.~Amann,
\emph{Linear and Quasilinear Parabolic Problems. Vol. I},
Birkh\"auser, Basel, 1995.

\bibitem{BCD}
H.~Bahouri, J.-Y.~Chemin and R.~Danchin,
\emph{Fourier Analysis and Nonlinear Partial Differential Equations},
Grundlehren Math. Wiss. 343, Springer, Heidelberg, 2011.

\bibitem{BerghLofstrom}
J.~Bergh and J.~L\"ofstr\"om,
\emph{Interpolation Spaces. An Introduction},
Grundlehren Math. Wiss. 223, Springer-Verlag, Berlin--New York, 1976.

\bibitem{BirdArmstrongHassager}
R.~B.~Bird, R.~C.~Armstrong and O.~Hassager,
\emph{Dynamics of Polymeric Liquids. Vol. 1: Fluid Mechanics}, 2nd ed.,
Wiley, New York, 1987.

\bibitem{BoffiBrezziFortin}
D.~Boffi, F.~Brezzi and M.~Fortin,
\emph{Mixed Finite Element Methods and Applications},
Springer Series in Computational Mathematics 44, Springer, Heidelberg, 2013.

\bibitem{CheminMasmoudi}
J.-Y.~Chemin and N.~Masmoudi,
About lifespan of regular solutions of equations related to viscoelastic fluids,
\emph{SIAM J. Math. Anal.} \textbf{33} (2001), 84--112.

\bibitem{CheminBook}
J.-Y.~Chemin,
\emph{Perfect Incompressible Fluids},
Oxford Lecture Series in Mathematics and its Applications 14,
Oxford University Press, New York, 1998.

\bibitem{CheminEtAl}
J.-Y.~Chemin, D.~S.~McCormick, J.~C.~Robinson and J.~L.~Rodrigo,
Local existence for the non-resistive MHD equations in Besov spaces,
\emph{Adv. Math.} \textbf{286} (2016), 1--31, doi:10.1016/j.aim.2015.09.004.

\bibitem{ChenNieYe}
Q.~Chen, Y.~Nie and W.~Ye,
Sharp ill-posedness for the non-resistive MHD equations in Sobolev spaces,
\emph{J. Funct. Anal.} \textbf{286} (2024), no.~6, Paper No.~110302, doi:10.1016/j.jfa.2023.110302.

\bibitem{ChenHao}
Q.~Chen and X.~Hao,
Global well-posedness in the critical Besov spaces for the incompressible Oldroyd--B model without damping mechanism,
\emph{J. Math. Fluid Mech.} \textbf{21} (2019), Paper No.~42, 23 pp.

\bibitem{ChenMiao}
Q.~Chen and C.~Miao,
Global well-posedness of viscoelastic fluids of Oldroyd type in Besov spaces,
\emph{Nonlinear Anal.} \textbf{68} (2008), no.~7, 1928--1939.

\bibitem{ConstantinKliegl}
P.~Constantin and M.~Kliegl,
Note on global regularity for two-dimensional Oldroyd--B fluids with diffusive stress,
\emph{Arch. Ration. Mech. Anal.} \textbf{206} (2012), 725--740.

\bibitem{ConstantinLE}
P.~Constantin,
Lagrangian--Eulerian methods for uniqueness in hydrodynamic systems,
\emph{Adv. Math.} \textbf{278} (2015), 67--102.

\bibitem{DanchinMucha}
R.~Danchin and P.~B.~Mucha,
A Lagrangian approach for the incompressible Navier--Stokes equations with variable density,
\emph{Comm. Pure Appl. Math.} \textbf{65} (2012), 1458--1480.

\bibitem{DanchinTransport}
R.~Danchin,
Estimates in Besov spaces for transport and transport-diffusion equations with almost Lipschitz coefficients,
\emph{Rev. Mat. Iberoam.} \textbf{21} (2005), no.~3, 863--888.

\bibitem{DeAnnaPaicu}
F.~De Anna and M.~Paicu,
The Fujita--Kato theorem for some Oldroyd--B model,
\emph{J. Funct. Anal.} \textbf{279} (2020), no.~11, Paper No.~108761, 64 pp., doi:10.1016/j.jfa.2020.108761.

\bibitem{DiestelUhl}
J.~Diestel and J.~J.~Uhl, Jr.,
\emph{Vector Measures},
Math. Surveys 15, American Mathematical Society, Providence, RI, 1977.

\bibitem{ElgindiRousset}
T.~M.~Elgindi and F.~Rousset,
Global regularity for some Oldroyd--B type models,
\emph{Comm. Pure Appl. Math.} \textbf{68} (2015), 2005--2021.

\bibitem{ElgindiLiu}
T.~M.~Elgindi and J.~Liu,
Global wellposedness to the generalized Oldroyd type models in $\mathbb R^3$,
\emph{J. Differential Equations} \textbf{259} (2015), no.~5, 1958--1966.

\bibitem{FeffermanEtAlJFA}
C.~L.~Fefferman, D.~S.~McCormick, J.~C.~Robinson and J.~L.~Rodrigo,
Higher order commutator estimates and local existence for the non-resistive MHD equations and related models,
\emph{J. Funct. Anal.} \textbf{267} (2014), no.~4, 1035--1056, doi:10.1016/j.jfa.2014.03.021.

\bibitem{FeffermanEtAlARMA}
C.~L.~Fefferman, D.~S.~McCormick, J.~C.~Robinson and J.~L.~Rodrigo,
Local existence for the non-resistive MHD equations in nearly optimal Sobolev spaces,
\emph{Arch. Ration. Mech. Anal.} \textbf{223} (2017), 677--691, doi:10.1007/s00205-016-1042-7.

\bibitem{GigaSohr}
Y.~Giga and H.~Sohr,
Abstract $L^p$ estimates for the Cauchy problem with applications to the Navier--Stokes equations in exterior domains,
\emph{J. Funct. Anal.} \textbf{102} (1991), 72--94.

\bibitem{GiraultRaviart}
V.~Girault and P.-A.~Raviart,
\emph{Finite Element Methods for Navier--Stokes Equations: Theory and Algorithms},
Springer Series in Computational Mathematics 5, Springer-Verlag, Berlin, 1986.

\bibitem{GuillopeSaut}
C.~Guillop\'e and J.-C.~Saut,
Existence results for the flow of viscoelastic fluids with a differential constitutive law,
\emph{Nonlinear Anal.} \textbf{15} (1990), no.~9, 849--869.

\bibitem{LeiLiuZhou}
Z.~Lei, C.~Liu and Y.~Zhou,
Global solutions for incompressible viscoelastic fluids,
\emph{Arch. Ration. Mech. Anal.} \textbf{188} (2008), 371--398.

\bibitem{LiTanYin}
J.~Li, W.~Tan and Z.~Yin,
Local existence and uniqueness for the non-resistive MHD equations in homogeneous Besov spaces,
\emph{Adv. Math.} \textbf{317} (2017), 786--798, doi:10.1016/j.aim.2017.07.013.

\bibitem{LiYuZhu}
J.~Li, Y.~Yu and W.~Zhu,
Ill-posedness issue on the Oldroyd--B model in the critical Besov spaces,
\emph{Proc. Roy. Soc. Edinburgh Sect. A Math.}, First View (2025), 1--20,
doi:10.1017/prm.2025.10057.

\bibitem{LinLiuZhang}
F.-H.~Lin, C.~Liu and P.~Zhang,
On hydrodynamics of viscoelastic fluids,
\emph{Comm. Pure Appl. Math.} \textbf{58} (2005), 1437--1471.

\bibitem{LionsMasmoudi}
P.-L.~Lions and N.~Masmoudi,
Global solutions for some Oldroyd models of non-Newtonian flows,
\emph{Chinese Ann. Math. Ser. B} \textbf{21} (2000), 131--146.

\bibitem{Meyer}
Y.~Meyer,
Wavelets, paraproducts, and Navier--Stokes equations,
in \emph{Current Developments in Mathematics, 1996},
International Press, Boston, MA, 1997, 105--212.

\bibitem{Oldroyd1950}
J.~G.~Oldroyd,
On the formulation of rheological equations of state,
\emph{Proc. Roy. Soc. London Ser. A} \textbf{200} (1950), no.~1063, 523--541.

\bibitem{Qian}
J.~Qian,
Well-posedness in critical spaces for incompressible viscoelastic fluid system,
\emph{Nonlinear Anal.} \textbf{72} (2010), no.~6, 3222--3234.

\bibitem{Renardy}
M.~Renardy,
\emph{Mathematical Analysis of Viscoelastic Flows},
CBMS-NSF Regional Conference Series in Applied Mathematics 73, SIAM, Philadelphia, 2000.

\bibitem{RunstSickel}
T.~Runst and W.~Sickel,
\emph{Sobolev Spaces of Fractional Order, Nemytskij Operators, and Nonlinear Partial Differential Equations},
De Gruyter Series in Nonlinear Analysis and Applications 3, Walter de Gruyter, Berlin, 1996.

\bibitem{SimonCompactness}
J.~Simon,
Compact sets in the space $L^p(0,T;B)$,
\emph{Ann. Mat. Pura Appl. (4)} \textbf{146} (1987), 65--96.

\bibitem{Solonnikov}
V.~A.~Solonnikov,
Estimates for solutions of non-stationary linearized systems of Navier--Stokes equations,
\emph{Amer. Math. Soc. Transl. Ser. 2} \textbf{75} (1968), 1--116.

\bibitem{Triebel}
H.~Triebel,
\emph{Theory of Function Spaces},
Monographs in Mathematics 78, Birkh\"auser, Basel, 1983.

\bibitem{XieFu}
H.~Xie and Y.~Fu,
Well-posedness in critical spaces for the density-dependent incompressible viscoelastic fluid system,
arXiv:1105.4690 [math.AP], 2011, doi:10.48550/arXiv.1105.4690.

\bibitem{YeLuoYin}
W.~Luo, W.~Ye and Z.~Yin,
The estimate of lifespan and local well-posedness for the non-resistive MHD equations in homogeneous Besov spaces,
\emph{Math. Methods Appl. Sci.} \textbf{48} (2025), no.~8, 8994--9006, doi:10.1002/mma.10770.

\bibitem{ZhangFangLp}
T.~Zhang and D.~Fang,
Global existence of strong solution for equations related to the incompressible viscoelastic fluids in the critical $L^p$ framework,
\emph{SIAM J. Math. Anal.} \textbf{44} (2012), no.~4, 2266--2288, doi:10.1137/110851742.

\bibitem{ZhangZhouHookean}
Z.~Zhang and Y.~Zhou,
Global well-posedness for incompressible Hookean elastodynamics in the critical Besov spaces,
\emph{Calc. Var. Partial Differential Equations} \textbf{64} (2025), no.~7, Paper No.~206, doi:10.1007/s00526-025-03075-6.

\end{thebibliography}
\end{document}